\numberwithin{equation}{section}
  \newcommand\phantomsection\relax
  \newcommand{\url}[1]{#1}
  \newcommand{\href}[2]{#2}
\mathchardef\ordinarycolon\mathcode`\: \mathcode`\:=\string"8000 \begingroup \catcode`\:=\active \gdef:{\mathrel{\mathop\ordinarycolon}} \endgroup
 \theoremstyle{plain}              
\newtheorem{defn}{Definition}[section]
\newtheorem{rem}{Remark}[section]
\newtheorem{thm}{Theorem}[section]
\newtheorem{prop}[thm]{Proposition}
\newtheorem{lem}[thm]{Lemma}
\newcommand{\ind}[1]{\mathds{1}_{\{#1\}}}   
\newenvironment{sproof}{%
\proof}{\endproof}
\newcommand{\E}{\mathbb{E}}
\newcommand{\Cov}{\mathbb{C}}
\newcommand{\Prob}{\mathbb{P}}
\newcommand{\wt}{\widetilde}
\newcommand{\wh}{\widehat}
\newcommand{\qnp}{\mathfrak X}
\newcommand{\eps}{\epsilon}
\newcommand{\lin}{\lambda_i^n}
\newcommand{\lnn}{\lambda^n}
\newcommand{\I}{2}
\newcommand{\reg}{\varPi}
\newcommand{\f}{\varphi}
\newcommand{\p}{\bar{p}}
\newcommand{\W}{\bar{W}}
\newcommand{\x}{y}
\newcommand{\ba}{\bar}
\newcommand{\twopartdef}[4]{
	\left\{
		\begin{array}{ll}
			#1 & \mbox{if } #2 \\
			#3 & \mbox{if } #4
		\end{array}
	\right.
}
\title{Heavy-traffic approximations for a layered network with limited resources}
\author[1]{A.\ Aveklouris}
\author[1]{M.\ Vlasiou}
\author[2]{J.\ Zhang}
\author[1,3]{B.\ Zwart}
\affil[1]{Eindhoven University of Technology}
\affil[2]{Hong Kong University of Science and Technology}
\affil[3]{Centrum Wiskunde en Informatica}
\begin{document}

\maketitle
\begin{abstract}

Motivated by a web-server model, we present a queueing network consisting of two layers. The first layer incorporates the arrival of  customers at a network of two single-server nodes. We assume that the inter-arrival and the service times have general distributions. Customers are served according to their arrival order at each node and after finishing their service they can re-enter at nodes several times (as new customers) for new services. At the second layer, active servers act as jobs which are served by a single server working at speed one in a Processor-Sharing fashion. We further assume that the degree of resource sharing is limited by choice, leading to a Limited Processor-Sharing discipline. Our main result is a diffusion approximation for the process describing the number of customers in the system. Assuming a single bottleneck node and studying the system as it approaches heavy traffic, we prove a state-space collapse property. The key to derive this property is to study the model at the second layer and to prove a diffusion limit theorem, which yields an explicit approximation for the customers in the system.
\end{abstract}
\textbf{Keywords:} Layered queueing network, limited processor sharing, fluid model, diffusion approximation, heavy traffic


\section{Introduction}\label{sec:Introduction}

We consider a network with a two-layered architecture. The first layer models the processing of customers by a network of two nodes. Each node can have multiple (but finitely many) servers. Customers are served according to their order of arrival and after finishing their service, they can re-enter at nodes several times for new services. The servers of the first layer act as jobs in the second layer, where they are simultaneously served by a common server working at speed one according to Processor-Sharing (PS) with rates depending on the number of customers in the system. Our goal is to derive an explicit approximation of the process describing the number of customers in the system.

We analyze the system as it approaches heavy traffic. Under the assumption that there is a single bottleneck, we derive explicit results for the joint distribution of the number of customers in the system by proving a diffusion limit theorem. To achieve this, we look at the system in the second layer. In this way, we can aggregate the whole system since the total workload of the system (including the future workload due to customers re-entering the queues) acts as if were that of a single server queue with two independent renewal inputs.

To derive our diffusion limit theorem, we carry out a program inspired by the work of Barmson \citep{bramson1998state} and Williams \cite{williams1998diffusion}, which consists of two main steps. First, we consider a critical fluid model, which can be thought of as a formal law of large numbers approximation under appropriate scaling. We identify the invariant states for the critical fluid model and we study the convergence to equilibrium of critical fluid model solutions as time goes to infinity. Our analysis has some similarities with the head-of-the-line processor sharing discipline as studied in \citep{bramson1998state}, but there are differences. In particular, as the degree of resource sharing at each node is finite in our case, we need to define appropriate spatial regions in which the fluid model solutions have qualitatively different behavior. Our main result is to show that a solution of the fluid model converges to equilibrium uniformly (in terms of the initial condition) on compact sets. To achieve this, we perform a time change that facilitates our analysis.

The second main step is to show a state-space collapse property for the joint queue length vector process in heavy traffic. For an appropriately defined sequence of stochastic processes, we show that the difference between this vector and an appropriate deterministic mapping of the one-dimensional total workload process vanishes. The latter process is shown to converge to a one-dimensional reflected Brownian motion.


Our work can be seen as a partial network extension of the limited processor sharing (LPS) queue, of which fluid, diffusion, and steady-state heavy traffic limit theorems have been derived \citep{zhang2009law}--\citep{zhang2008steady}. In our model, we assume that the inter-arrival and the service times have general distributions, but we consider that only one customer at each node can receive service at any time. In case that the inter-arrival and the service times are exponential, the service discipline at each station becomes irrelevant. An extension in the direction of general service times, using processor sharing at each node would require measure valued processes and is beyond the scope of the present paper. A mostly heuristic description of the results in this paper has appeared  in \cite{vlasiou2012separation}. In the classical applied probability literature, a version of our model has been investigated in a steady-state setting using boundary value techniques \citep{fayolle1982solution}; the solution in that paper may be used for numerical purposes, and is complementary to our heavy traffic limit, which yields explicit formulae, both for time-dependent as well as steady-state results.

In addition, our work is a contribution to the performance analysis of layered queueing networks. These are queueing networks where some entities in the system have a dual role (e.g., servers become customers to a higher-layer). In such systems, the dynamics in layers are correlated and the service speeds vary over time. Layered queueing networks can be characterised by separate layers (see \cite{rolia1995method} and \cite{woodside1995stochastic}) or simultaneous layers. In the first case, customers receive service with some delay. An application where layered networks with separate layers appear is the manufacturing systems e.g., \citep{dorsman2013marginal} and \citep{dorsman2015heavy}. On the other hand, in layered networks with simultaneous layers, customers receive service from the different layers simultaneously. Layered networks with simultaneous layers have applications in communications networks. An application example where layered networks with simultaneous layers (such as our model) appear naturally are web-based multi-tiered system architectures. In such environments, different applications compete for access to shared infrastructure resources, both at the software level (e.g., mutex and database locks, thread-pools) and at the hardware level (e.g., bandwidth, processing power, disk access). For background, see \cite{van2001web} and \cite{van2009dynamic}.

The paper is organized as follows. We provide a detailed model description in Section~\ref{sec: Model} and we introduce the systems dynamics. In Section~\ref{sec: Fluid analysis}, we derive the fluid model and analyze it under the assumption of a single bottleneck and heavy traffic in the network. As we see, the assumption of the single bottleneck allows us to prove a State Space Collapse (SSC) property. Then, we show that a fluid model solution converges to equilibrium uniformly on compact sets. The main result of this paper is contained in Section~\ref{Sec: Diffusion approximations}. Namely, we provide a diffusion limit theorem for the joint customer population process for this two-layered queueing network. First, we prove that the diffusion scaled total workload process converges in distribution to a reflected Brownian motion. This result together with results in \citep{bramson1998state}, lead to the main theorem..


\section{Model}\label{sec: Model}
We assume a network with two layers. In layer 1, there are $\I$ single-server nodes indexed by $i$. Customers arrive at node $i\in\{1,\I\}$ randomly one by one and have a random service requirement. A customer completing service at node $i$ may be routed at node $l$, $l\in\{1,\I\}$ for another service. It is assumed that customers are served according to their arrival order at each node;\ i.e., First In First Out. Only the first customer at each node can receive service at any time;\ i.e., the network is a Head of the Line network (HL).

In layer 2, there is a single server working at speed one. The servers of layer 1 are served by this single server simultaneously and at a rate which depends on the number of customers in the system. The model is illustrated in the following figure.
\begin{figure}[htbp]\label{fig:Model}
	\centering
	\includegraphics[scale=0.4]{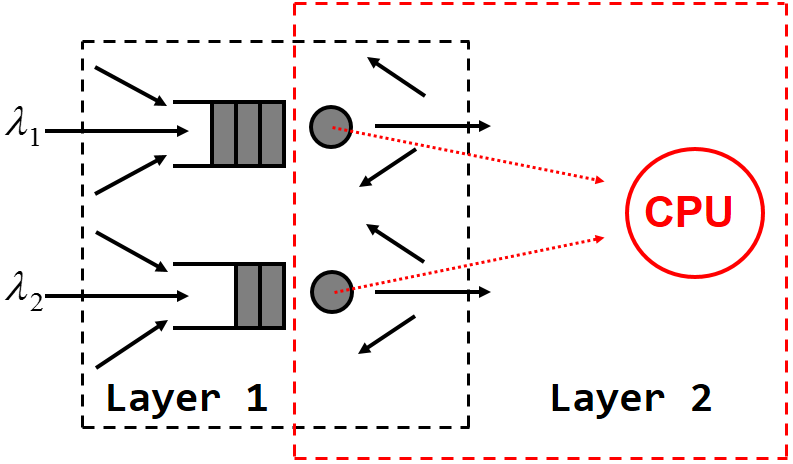}
    \caption{A two-layered network with $\I$ single-server nodes and routing.}	
\end{figure}
In Section \ref{Sec:Preliminaries and model description}, we give a formal description of the model and in Section \ref{Sec:System dynamics}, we introduce the dynamics describing the model. In the sequel, we use the subscript $i$ to refer to processes or quantities pertaining to each node and by convention, we omit the subscript to denote the 2-dimensional vector of these processes or quantities.


\subsection{Preliminaries and model description}\label{Sec:Preliminaries and model description}
In this section, we give a formal model description. Let $(\Omega,\mathcal{F},\Prob)$ be a probability space. For $T>0,$ let $\mathcal{D}[0,T]^2$ be the Skorokhod space; i.e., the space of 2-dimensional real-valued functions on $[0,T]$ that are right continuous with left limits endowed with the $J_1$ topology (as all candidate limit objects we consider are continuous, we actually only need to work with the uniform topology);  cf.\ \cite{chen2001fundamentals}. We denote by $\mathcal{B}(\mathcal{D}[0,T]^2)$ the Borel $\sigma-$algebra of $\mathcal{D}[0,T]^2$. All the processes are defined from $(\Omega,\mathcal{F},\Prob)$ to $(\mathcal{B}(\mathcal{D}[0,T]^2),\mathcal{D}[0,T]^2).$ For a process $X(t),$ we denote the uniform norm by $\|X(t)\|_T=\sup_{0 \leq t\leq T}|X(t)|$, where $|X(\cdot)|=\max_i|X_i(\cdot)|.$
We adopt the convention that all mentioned vectors are 2-dimensional columns and use $a^T$ to denote the transpose of a vector or a matrix $a$. We use $A^{-1}$ to denote the inverse of a square matrix $A$, $A^{k}$ its $k^{\text{th}}$ power, and $\|A\|$ the maximum element of $A$. Furthermore, $I$ represents the identity matrix and $e$ and $e_0$ are the vectors consisting of 1's and 0's, respectively, the dimensions of which are clear from the context. Also, $e_i$ is the vector whose $i^\text{th}$ element is 1 and the rest are all 0. Last, for a real number $x$,  its integer part is represented by $[x]$.

We start by describing the first layer. Let $u_i(j)$ for $j=2,3,\ldots$, be the time between the $(j-1)^\text{th}$ and $j^\text{th}$ external arrival at node $i$ and $u_i(1)>0$ be the residual arrival time of the first customer entering at node $i$ after time 0. We assume that the sequence $\{u_i(j)\}$ for $i=1,\I$ and $j=2,3,\ldots$ is a sequence of positive i.i.d.\ random variables with mean $1/\lambda_i$, $\lambda_i>0,$ and that $u_i(1)$ is independent of this sequence but sampled from an arbitrary distribution with the same mean.
For $i=1,\I$, define the \textit{cumulative arrival time process} $U_i(\cdot)$, as follows: $U_i(0):=0$ and $U_i(m):=\sum_{j=1}^m u_i(j)$ for $m\in \mathbb{N}$.
The number of the external arrivals at node $i$ until time $t>0$ is given by \textit{the external arrival process}
\begin{equation*}
E_i(t):= \max\{m\geq 0: U_i(m)\leq t \}.
\end{equation*}

In order to be able to define the total workload process in the system, including future service requirements due to routing, we need to introduce a sequence of random variables for any customer $j.$ For any fixed time $t\geq 0$ and $j>1$, let $v_{ii}^{(1)}(j),$ be the immediate service requirement of the $j^\text{th}$ customer (external or routed) at node $i.$ Also, we define $v_{il}^{(k+1)}(j)$ to be the service requirement of the $j^\text{th}$ customer (external or routed) at node $i$ at the $k^\text{th}$ future time he visits node $l,$ for $i,l\in\{1,2\}$ and $j,k\in\mathbb{N}.$ The sequence
$\{v_{il}^{(k)}(j)\}$, indexed by $j$, is a sequence of i.i.d.\ random variables for any fixed $i,l,k$ and for $j>1$ and has mean
$\beta_l:=1/\mu_l,$ $\mu_l>0.$ The random variable $v_{ii}^{(1)}(1)$ denotes the residual service time for the first customer being served at node $i$ at time 0;\ it is independent of the sequence $\{v_{il}^{(k)}(j)\}$ but sampled from an arbitrary distribution with the same mean.  In addition, we assume that all the above-mentioned random variables have finite second moments (more precisely, we need a Lindeberg-type condition to hold to make sure that the exogenous input processes satisfy a functional central limit theorem; see Section \ref{Sec: Diffusion approximations} for more details).
We define the \textit{cumulative service time process} as $V_i(0):=0$ and for $m\in \mathbb{N}$,
\begin{equation*}\label{cumulative service time process}
V_i(m):= \sum_{j=1}^m v_{ii}^{(1)}(j)\qquad i=1,\I,
\end{equation*}
and the counting process
\begin{equation}\label{eq:counting process}
S_i(t):= \max\{m\geq 0: V_i(m)\leq t \}.
\end{equation}
We shall use the random variables $\{v_{il}^{(k+1)}(j)\}$ to count the future workload in the system at time $t$. For a fixed $t>0$, $v_{il}^{(k+1)}(j)$ represents the $k^\text{th}$ future service requirement of the $j^\text{th}$ customer waiting to being served at node $i$ and routed at node $l.$ This event will occur after time $t$ and after the completion of his service at node $i.$

Customers can move between queues according to Markovian routing. To describe the routing process, we define the following quantities. Let $P$ be the (square) routing matrix of dimension $\I$. It is assumed that it is substochastic with a spectral radius less than one;\ i.e., its largest eigenvalue is less than one. In other words, the network is open, so the following relations hold:
\begin{center}
$(I-P^T)^{-1}=I+\sum_{k=1}^{\infty} (P^T)^k$\quad \text{and}\quad $\lim_{k\rightarrow \infty}(P^T)^k=0.$
\end{center}

For any customer $j$ at node $i$ (external or routed), we define the random variables
$\f^{(k)}_{il}(j)=1$ if the $j^\text{th}$ departing customer from node $i$ is routed to node $l$ in $k$ steps.
The probability of this event is given by
\begin{equation*}\label{eq:k-routing}
\Prob(\f^{(k)}_{il}(j)=1)=p^{(k)}_{il},
\end{equation*}
where $p^{(k)}_{il}$ denotes the $(i,l)^\text{th}$ element of the matrix $P^k$.
For $i=1,2$, we define the 2-dimensional random vector
\begin{equation*}\label{eq:vector of Phi}
\f^{(k)}_{i}(j):=
(\f^{(k)}_{i1}(j),\f^{(k)}_{i2}(j))^T.
\end{equation*}
Note that $\f^{(k)}_{i}(j)$ can take values in the set $\{e_0,e_1,e_2\}$, where $\f^{(k)}_{i}(j)=e_0$ means that the $j^\text{th}$ customer leaves the system. Let $\p^{(k)}_{i}$ be the $i^\text{th}$ column of the matrix $(P^T)^k$. The expectation and the covariance matrix of $\f^{(k)}_{i}(j),$ for $i=1,2, $ are given by
\begin{center}
$\E(\f^{(k)}_{i}(j))=\p^{(k)}_{i}$  \quad \text{and}\quad
$
\Cov(\f^{(k)}_{i}(j))=\begin{bmatrix}
p^{(k)}_{i1}(1-p^{(k)}_{i2})      & -p^{(k)}_{i1}p^{(k)}_{i2}  \\
-p^{(k)}_{i1}p^{(k)}_{i2}     & p^{(k)}_{i2}(1-p^{(k)}_{i1})
\end{bmatrix}.
$
\end{center}

Now, we can define the routing process, which counts the number of customers who are routed from node $l$ to node $i$ as
\begin{equation*}\label{eq:routing_process}
 \Phi_{li}(m):=\sum_{j=1}^{m} \f_{li}^{(1)}(j), \quad i,l=1,2,\ m\in \mathbb{N}.
\end{equation*}
The total arrival rate at node $i$, $\gamma_i$, is given by the solution of the following traffic equations
\begin{equation*}
\gamma_i=\lambda_i+\sum_{l=1}^{\I}p_{li}^{(1)}\gamma_l,\ i=1,\I.
\end{equation*}
In vector form, this can be written as
\begin{equation}\label{eq:trafficVector}
 \gamma=(I-P^T)^{-1}\lambda.
\end{equation}
It is shown in \cite[Theorem 7.3]{chen2001fundamentals}
that under the assumptions described above,
\eqref{eq:trafficVector} has a unique solution $\gamma=(\gamma_1,\gamma_2)^T$. The traffic intensity of node $i$ is $\rho_i:=\gamma_i / \mu_i.$

Now, we describe the service discipline at the second layer. Here, there is a single server. The servers of layer 1 become jobs at layer 2 in the sense they are served by the server of layer 2 simultaneously and at a rate that depends on the number of customers in layer 1 (at any time). The rate that each node receives is given by the service allocation function
$R(\cdot):\mathbb{R}_+^2\rightarrow \mathbb{R}_+^2$, with $R(\cdot):=(R_1(\cdot),R_2(\cdot))^T$ and for $i=1,\I,$
\begin{equation}\label{eq:ServiceAllocation}
R_i(q):= \twopartdef { \frac{\min\{ {q_i},{K_i}\}}{\sum_{j=1}^{\I}\min\{ {q_j},{K_j}\}}} {q_i\neq0,} {0} {q_i=0.}
\end{equation}
The quantity $q_i$ represents the number of customers at node $i.$
The $\I$-dimensional vector $K=(K_1,K_\I)^T$ is constant and we call it \textit{the degree of resource sharing.}  We assume that it is always finite and the user can choose it as a parameter of the system.
Observe that $K$ in this service allocation function guarantees a minimum service rate for each customer in the system. Also, note that the above function is Lipschitz continuous for $q\neq (0,0).$

We make the additional assumption that there exists a unique bottleneck in our system, which w.l.o.g.\ we let it be node 1. The definition of bottleneck is given below.
\begin{defn}[Bottleneck]\label{def:Bottleneck}
Node $i$ is a bottleneck if $i=\arg \min_j\frac{\mu_jK_j}{\gamma_j},\ j=1,\I.$
\end{defn}
\noindent
By the previous definition, a straightforward inequality  follows
\begin{equation}\label{In:Bot.AS}
\frac{\rho_1}{K_1}>\frac{\rho_2}{K_2}.
\end{equation}
Observe that, if $K_1=K_2,$ an intuitive explanation of the above definition is that the average occupancy of the server at node 1 is strictly greater than the server at node 2. In case of multi-server nodes where $K_i$ represents the number of servers an node $i,$ the fraction $\frac{\rho_i}{K_i}$ is the average occupancy of a server at node $i.$


\subsection{System dynamics}\label{Sec:System dynamics}

In this section, we introduce the dynamics that describe our model. We denote by $Q_i(t)$ the number of customers at node $i$ at time $t$. This is given by
\begin{equation}\label{a}
Q_i(t)=Q_i(0)+E_i( t)+\sum_{l=1}^{\I}\Phi_{li}\Big(S_l\big(T_l(t)\big)\Big)-S_i\big(T_i(t)\big),
\end{equation}
where $Q_i(0)$ denotes the number of customers initially at node $i.$ We define the \textit{cumulative service time} of the server at node $i$ as
\begin{equation}\label{c}
T_i(t)=\int_{0}^{t} R_i(Q(s))ds.
\end{equation}
This quantity can be viewed as the effort that the server of node $i$ has put
in processing customers during $[0,t].$
Note that as the allocation function might be less than one, the above process is not necessarily equal to the amount of time that the server at node $i$ is busy during $[0,t].$ In case the other node is empty during $[0,t],$  \eqref{c} coincides with the busy time at node $i$. Recall that $E_i(t)$ is the number of external arrivals at node $i$ up to time $t.$
Observe that $S_i\big(T_i(t)\big),$ which is a composition of the renewal process \eqref{eq:counting process} and the process $T_i(t)$, represents the number of departures at node $i$ until time $t$.
Furthermore, the \textit{total arrival process} is given by
\begin{equation}\label{TotalArPr}
A_i(t)=E_i(t)+\sum_{l=1}^{\I}\Phi_{li}\Big(S_l(T_l(t))\Big).
\end{equation}
The amount of time that both servers at the nodes are idle during $[0,t]$  is given by the 1-dimensional process
\begin{equation}\label{d}
Y_{L_2}(t)=t-\sum_{i=1}^{\I} T_i(t).
\end{equation}
Alternatively, we can see this quantity as the idle time of the server in layer 2 during $[0,t]$. Further, the \textit{immediate workload} at node $i$ at time $t$ is defined as
\begin{equation}\label{b}
W_i(t)=V_i\Big(Q_i(0)+A_i(t)\Big)-T_i(t).
\end{equation}
Observe that $W_i(t)$ is nonnegative for any $t\geq 0$. Last, due to the work-conserving property in layer 2, the following relation holds:
\begin{equation}\label{e}
Y_{L_2}(t)\textrm{ increases }\Rightarrow W_1(t)+W_2(t)=0, \qquad t\geq0,\ i=1,\I.
\end{equation}
Recall that when we omit the subscript $i$, we refer to the 2-dimensional column vector of the corresponding process/quantity; for example $A(\cdot)=(A_1(\cdot),A_2(\cdot))^T$ and
$W(\cdot)=(W_1(\cdot),W_2(\cdot))^T.$
All the essential information of the evolution of the system is contained is the following 6-tuple
$$\qnp(\cdot):=(A(\cdot),S(\cdot),Q(\cdot),T(\cdot),Y_{L_2}(\cdot),W(\cdot)).$$

In addition, the total (immediate and future) workload of the system plays a key role in our analysis. First, we define the remaining service requirement of the $j^{\text{th}}$ customer waiting to be served at node $i=1,\I$ as
\begin{equation}\label{eq:total S requirement}
s_i(j):=v_{ii}^{(1)}(j)+s'_i(j),
\end{equation}
where
\begin{equation*}\label{eq:Future requirement}
s'_i(j):=\sum_{l=1}^{\I}\sum_{k=1}^{\infty}
\f^{(k)}_{il}(j)v_{il}^{(k+1)}(j)
\end{equation*}
is the future service requirement of the above-mentioned customer. Observe that, for an external arrival, $s_i(j)$ is the total service requirement.
The first and the second moments of \eqref{eq:total S requirement} are given (in vector form) by
\begin{equation}\label{eq:Total time}
 \tau:=\E(s(j))=(I-P)^{-1}\beta
\end{equation}
and
\begin{equation}\label{eq:secont moment of Total time}
 \tau^{(2)}:=\E(s^2(j))=(I-P)^{-1}( \E(v^2(j))+2\beta(P\tau)).
\end{equation}

Now, we can define the (1-dimensional) total workload of the system as
\begin{equation}\label{eq:total_workload}
W_{Tot}(t):=\sum_{i=1}^{2} W_i(t)+
\sum_{i=1}^{\I}\sum_{j=S_i\big(T_i(t)\big)+1}^{Q_i(0)+A_i( t)}
s'_i(j).
\end{equation}
In case that $S_i\big(T_i(t)\big)=Q_i(0)+A_i( t)$, i.e., there are no customers at node $i$, we understand the second sum of the last equation as zero. Obviously, the total workload is not a Markov process as it is dependent on future service requirements. In Section~\ref{Convergence of total workload}, we shall see that under an appropriate scaling (i.e., the diffusion scaling) the dependence of the total workload on the future vanishes.

Last, as our network is HL, only one customer can be in service at node $i$ at any time. This property gives an upper and a lower bound for the cumulative service time \eqref{c} at node $i,$ which is given in \cite[Inequality~2.13]{bramson1998state}; namely
\begin{equation}\label{eq:HL}
	V_i(S_i(T_i(t)))\leq T_i(t)< V_i(S_i(T_i(t))+1).
\end{equation}

We have so far defined the system dynamics for the above-mentioned two layered network and stated all the assumptions we need for our analysis.
We are now ready to study the fluid model of this network, which is the first essential step to show a SSC property.


\section{Fluid analysis}\label{sec: Fluid analysis}
In this section, we study a critical fluid model, which is a deterministic model and can be thought of as a formal law of large numbers approximation under appropriate scaling. We shall give a rigorous proof of the last statement in the next section.

The main goal is to prove uniform convergence (w.r.t. the initial condition) on compact sets for the fluid model under the critical loading assumption; i.e., the traffic intensity of the network is one. First, we find the invariant points (or equilibrium states) and define an appropriate lifting map
which describes these points. Then, we define a time-changed version of the original fluid model and we show that it is enough to prove the convergence for the time-changed function. As the time-changed function is given by a piece-wise linear ODE, we are able to find the solution and to show the convergence. Because the degree of resource sharing (the vector $K$) is finite we need to separate the state space in suitable regions and to distinguish cases depending on initial conditions.

\subsection{Definition and invariant points}\label{Subsec: Invariant points}
The traffic intensity of the network is given by $\rho:=\beta^T \gamma=\sum_{i=1}^{\I}\rho_i.$ We make the critical loading assumption, i.e.,
\begin{equation}\label{eq:critical loading}
\rho=\rho_1+\rho_2=1.
\end{equation}
To derive the fluid model equations we replace any random quantity in \eqref{a}$\textup{--}${\eqref{e} with its mean. The fluid model equations are given by
\begin{eqnarray}
&&\ba{Q}_i(t)=\ba{Q}_i(0)+ \lambda_i t +\sum_{l=1}^{\I}p_{li}\mu_l \ba{T}_l(t)-\mu_i \ba{T}_i(t),\label{eq:Fluid queue}\\
&&\ba{T}_i(t)=\int_{0}^{t} R_i(\ba{Q}(s))ds,\label{eq:Fluid busy time}\\
&&\sum_{i=1}^{\I} \ba{T}_i(t)+\ba{Y}_{L_2}(t)=t,\label{work concerving}\\
&&\ba{W}_i(t)=\beta_i\Big(\lambda_it+\sum_{l=1}^{\I}p_{li}\mu_l \ba{T}_l(t)+\ba{Q}_i(0)\Big)-\ba{T}_i(t), \label{eq:Fluid Im work}\\
&&\ba{Y}_{L_2}(t)\textrm{ increases }\Rightarrow \ba{W}_1(t)+\ba{W}_2(t)=0, \qquad t\geq0,\ i=1,\I.\label{eq:Fluid idle time}
\end{eqnarray}
We can show that the immediate workload in the fluid model can be written as $\ba{W}_i(t)=\beta_i \ba{Q}_i(t).$
\begin{defn}[Fluid model]
We say that a $2$-dimensional vector $\ba{Q}(\cdot)$ with non-negative components is a solution of the fluid model if it is continuous and satisfies \eqref{eq:Fluid queue}$\textup{--}$\eqref{eq:Fluid idle time} for
$t\in [0,\delta)$, and $Q(t)=0$ for $t\geq \delta$, with $\delta = \inf \{ t: Q(t)=0\}$.
\end{defn}
We define an auxiliary quantity, which can be interpreted as the total workload in the fluid model. It is defined by function $\ba{Q}(\cdot)$ as follows:
\begin{equation*}\label{eq:Fluid_ap_W}
\begin{split}
\W_{Tot}(t)=\beta^T\ba{Q}(t)+\sum_{k=1}^{\infty}\sum_{i=1}^{2}\beta^T\p^{(k)}_{i}\ba Q_i(t)
&=\beta^T\ba{Q}(t)+\sum_{k=1}^{\infty}\beta^T(P^T)^k\ba Q(t)\\
&=\beta^T(I-P^T)^{-1}\ba Q(t)=\tau^T \ba Q(t).
\end{split}
\end{equation*}
A useful result in our analysis is that the fluid total workload in the system remains constant under the critical loading assumption.
\begin{prop}\label{workload is constant}
For any fluid model solution $\ba{Q}(\cdot)$, we have that
\begin{equation*}\label{eq:W=constant}
\W_{Tot}(t)=
\beta^T(I-P^T)^{-1}\ba {Q}(t)
=\W_{Tot}(0).
\end{equation*}
\end{prop}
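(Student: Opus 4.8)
The plan is to show that $\W_{Tot}(t) = \tau^T \bar Q(t)$ is constant by differentiating in $t$ and using the fluid equations together with the critical loading assumption. First I would write $\W_{Tot}(t) = \tau^T \bar Q(t)$ with $\tau = (I-P)^{-1}\beta$, so that $\tau^T = \beta^T (I-P^T)^{-1}$. Differentiating \eqref{eq:Fluid queue} gives, in vector form,
\begin{equation*}
\dot{\bar Q}(t) = \lambda + (P^T M - M)\bar T'(t) = \lambda - (I - P^T)M\,\dot{\bar T}(t),
\end{equation*}
where $M = \mathrm{diag}(\mu_1,\mu_2)$ and $\dot{\bar T}(t) = R(\bar Q(t))$ componentwise (this derivative exists a.e.\ since $\bar T_i$ is Lipschitz by \eqref{eq:Fluid busy time} and continuity of $\bar Q$). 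Hence
\begin{equation*}
\frac{d}{dt}\W_{Tot}(t) = \beta^T (I-P^T)^{-1}\dot{\bar Q}(t) = \beta^T (I-P^T)^{-1}\lambda - \beta^T M \dot{\bar T}(t) = \gamma^T \beta - \beta^T M R(\bar Q(t)),
\end{equation*}
using \eqref{eq:trafficVector}. Now $\beta^T M = e^T$ (since $\beta_i \mu_i = 1$), so $\beta^T M R(\bar Q(t)) = \sum_i R_i(\bar Q(t))$, and $\gamma^T\beta = \rho = 1$ by the critical loading assumption \eqref{eq:critical loading}.

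The remaining point is to verify that $\sum_i R_i(\bar Q(t)) = 1$ whenever $\bar Q(t) \neq 0$, i.e.\ whenever $t < \delta$. From the definition \eqref{eq:ServiceAllocation}, if at least one component $\bar Q_i(t)$ is positive then $R_i(\bar Q(t)) = \min\{\bar Q_i,K_i\}/\sum_j \min\{\bar Q_j,K_j\}$ for that $i$, the empty nodes contribute $0$ both as numerator and denominator term, and therefore the $R_i$ sum to $1$. Thus $\frac{d}{dt}\W_{Tot}(t) = 0$ for a.e.\ $t \in [0,\delta)$, and since $\W_{Tot}$ is continuous (being a linear image of the continuous $\bar Q$) and, on $[0,\delta)$, absolutely continuous with zero derivative a.e., it is constant on $[0,\delta)$; by continuity it equals $\W_{Tot}(0)$ on all of $[0,\delta]$. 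For $t \geq \delta$ we have $\bar Q(t) = 0$, so $\W_{Tot}(t) = 0 = \W_{Tot}(\delta)$, which forces $\W_{Tot}(0) = 0$ as well; in all cases $\W_{Tot}(t) = \W_{Tot}(0)$.

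I expect the only real subtlety to be the differentiability/absolute-continuity bookkeeping: one must argue that $\bar T_i$ is Lipschitz (hence differentiable a.e.) and that $\W_{Tot}$, as an absolutely continuous function with a.e.-zero derivative, is genuinely constant — and handle the behaviour at and past the hitting time $\delta$ cleanly. An alternative, which avoids differentiation entirely, is to integrate \eqref{eq:Fluid queue} directly: $\tau^T \bar Q(t) - \tau^T \bar Q(0) = \rho\, t - \sum_i \bar T_i(t)$ after the same algebra, and then use \eqref{work concerving} together with the fact (following from \eqref{eq:Fluid Im work}, \eqref{eq:Fluid idle time} and $\bar W_i = \beta_i \bar Q_i$) that $\bar Y_{L_2}(t) = 0$ as long as $\bar Q(t) \neq 0$, so $\sum_i \bar T_i(t) = t = \rho t$ on $[0,\delta)$; this gives the claim without any measure-theoretic care about null sets, and is probably the cleaner route to write up.
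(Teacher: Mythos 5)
Your proposal is correct and follows essentially the same route as the paper: differentiate $\W_{Tot}(t)=\tau^T\ba Q(t)$, use the fluid queue equation to get $\W_{Tot}'(t)=\beta^T\gamma-\sum_i R_i(\ba Q(t))=1-1=0$ by critical loading and the normalization of the allocation function, and conclude constancy. Your treatment is in fact slightly more careful than the paper's on the a.e.-differentiability bookkeeping and the behaviour at and beyond $\delta$, but the core argument is identical.
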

\begin{proof}
If $\ba{Q}(0)=0,$ then $\W_{Tot}(t)\equiv0,$ for $t\geq 0.$ Let $\ba{Q}(0)>0.$ Assume now $\ba{Q}(0)=0$. By definition, $\ba{Q}(t)$ is continuous, so let $t$ be such that $\ba{Q}(t)$ is positive in a neighborhood of $t$. Calculating the derivative of the total workload at time $t$, we derive
\begin{equation*}
\begin{split}
\W_{Tot}'(t)=&\beta^T(I-P^T)^{-1}\ba {Q}'(t)
=\beta^T(I-P^T)^{-1}(\lambda-(I-P^T)(\mu \circ R(\ba{Q}(t))\\
=&\beta^T \gamma-\beta^T (\mu \circ R(\ba{Q}(t)))
=\beta^T \gamma-\sum_{i=1}^{2} R_i(\ba{Q}(t))=0.
\end{split}
\end{equation*}
The last equation holds due to \eqref{eq:critical loading} and the property of the service allocation function; i.e.\ $\sum_{i=1}^{2} R_i(\ba{Q}(t))=1.$ It follows that $\W_{Tot}(t)=\W_{Tot}(0)$ for $t\geq 0.$
Thus, $\W_{Tot}(t)$ is constant on $[0,\delta]$. Combining this with the continuity of $\ba{Q}(t)$, we see that, necessarily, $\delta=\infty$. Thus, the result extends to all positive $t$.
\end{proof}
%
In the following lemma, we show that there exists a solution to the fluid model equations for all non-zero initial states and it is unique.
\begin{lem}[Existence and uniqueness]\label{Existence and uniqueness}
For any $\ba{Q}(0)\in \mathbb{R}_+^2 \backslash\{0\}$ there exists a unique solution to the fluid model equations.
\end{lem}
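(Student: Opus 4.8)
The plan is to analyze the fluid model equations by first reducing them to an autonomous system of differential equations driven solely by $\ba{Q}(\cdot)$. Using the identity $\ba{W}_i(t)=\beta_i\ba{Q}_i(t)$ together with \eqref{eq:Fluid queue} and \eqref{eq:Fluid busy time}, a fluid solution is determined by $\ba{Q}(t)=\ba{Q}(0)+\lambda t-(I-P^T)(\mu\circ R(\ba{Q}(t)))\,t$ more precisely by the integral equation $\ba{Q}(t)=\ba{Q}(0)+\lambda t-(I-P^T)\int_0^t(\mu\circ R(\ba{Q}(s)))\,ds$, so it suffices to prove existence and uniqueness for this equation on the set $\mathbb{R}_+^2\setminus\{0\}$ (extended by $\ba{Q}(t)=0$ once the origin is reached, which by Proposition~\ref{workload is constant} never actually happens when $\ba{Q}(0)\neq0$, since $\W_{Tot}(t)=\tau^T\ba{Q}(t)$ is a constant bounded away from zero).

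First I would establish existence. The right-hand side $F(q):=-(I-P^T)(\mu\circ R(q))$ is Lipschitz continuous on any region bounded away from $q=(0,0)$, because $R(\cdot)$ is Lipschitz there (as noted after \eqref{eq:ServiceAllocation}) and the linear map $(I-P^T)$ and the componentwise product with $\mu$ are Lipschitz. By Proposition~\ref{workload is constant}, any solution started at $\ba{Q}(0)\neq0$ stays in the compact set $\{q\in\mathbb{R}_+^2: \tau^Tq=\tau^T\ba{Q}(0)\}$, which is disjoint from the origin; combined with a priori boundedness ($\|\ba{Q}(t)\|$ cannot exceed a constant depending on $\tau^T\ba{Q}(0)$ and the minimal entries of $\tau$), a standard Picard–Lindelöf / Peano argument on this invariant compact region gives a solution existing for all $t\geq0$. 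One subtlety: a solution could a priori drift to the boundary of $\mathbb{R}_+^2$ where one coordinate vanishes while the other does not; there $R$ is still Lipschitz (the denominator $\min\{q_1,K_1\}+\min\{q_2,K_2\}$ stays positive), so the construction extends past such boundary pieces, and the non-negativity of $\ba{Q}$ is preserved because whenever $\ba{Q}_i(t)=0$ the drift $F_i$ is non-negative (no departures from an empty node, only arrivals and routing in).

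For uniqueness, suppose $\ba{Q}^{(1)}$ and $\ba{Q}^{(2)}$ are two solutions with the same nonzero initial state. Since both stay in the same origin-avoiding compact set where $F$ is Lipschitz with some constant $L$, I would write $|\ba{Q}^{(1)}(t)-\ba{Q}^{(2)}(t)|\le L\int_0^t|\ba{Q}^{(1)}(s)-\ba{Q}^{(2)}(s)|\,ds$ and apply Grönwall's inequality to conclude the two solutions coincide for all $t$ before either hits the origin; but by Proposition~\ref{workload is constant} neither ever hits the origin, so they agree on all of $[0,\infty)$. I expect the main obstacle to be the behavior near the coordinate axes and, in principle, near the origin: one must verify carefully that the invariant-workload bound really does keep solutions uniformly away from $(0,0)$ and that the Lipschitz estimates for $R$ are uniform on the relevant compact region (the Lipschitz constant of $R$ blows up as $q\to 0$, so the argument genuinely relies on Proposition~\ref{workload is constant} to exclude that regime), and to handle the non-smoothness of $R$ across the hyperplanes $q_i=K_i$, where $R$ remains Lipschitz but not differentiable — this is why the piecewise-linear ODE structure alluded to in the introduction to Section~\ref{sec: Fluid analysis} is invoked, and uniqueness still follows because Grönwall only needs the global Lipschitz bound, not smoothness.
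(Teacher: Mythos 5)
Your proposal is correct and follows essentially the same route as the paper: rewrite the fluid equations as the autonomous ODE $\ba{Q}'(t)=\Psi(\ba{Q}(t))$ with $\Psi(q)=\lambda-(I-P^T)(\mu\circ R(q))$, use Proposition~\ref{workload is constant} to keep the trajectory on the compact set $\{q\in\mathbb{R}_+^2:\tau^Tq=\W_{Tot}(0)\}$ away from the origin where $R$ is Lipschitz, and conclude by standard ODE theory (the paper simply cites Walter's theorem where you spell out Picard--Lindel\"of and Gr\"onwall). If anything, you are more careful than the paper about the fact that $R$ is only \emph{locally} Lipschitz on $\mathbb{R}_+^2\setminus\{0\}$, with constant blowing up near the origin, so that the conserved-workload bound is genuinely needed.
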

\begin{proof}
Let $\ba{Q}(0)>0$. A by-product of the previous lemma is that $\delta=\infty$. Thus, we can discard the origin and define the function
$\Psi(\cdot):\mathbb{R}_+^2\backslash\{0\}\rightarrow \mathbb{R}_+^2$
as
\begin{equation}\label{eq:psi function}
\Psi(\cdot):=\lambda-(I-P^T)(\mu \circ R(\cdot)),
\end{equation}
where $\mu \circ R(\cdot)$ indicates the Hadamard product;\ i.e., $\mu \circ R(\cdot)=(\mu_1 R_1(\cdot),\mu_2 R_2(\cdot))^T$. This function is Lipschitz continuous because $R(\cdot)$ is. Now, note that \eqref{eq:Fluid queue} can be written as
\begin{equation}\label{eq:ODE}
\ba{Q}'(t)=\Psi(\ba{Q}(t)),\quad t\geq 0,
\end{equation}
where the prime denotes the derivative with respect to time. The existence and uniqueness follows directly by
\citep[Section 10, Theorem IV]{WalterODE}.
\end{proof}
\noindent

Now, we characterize the invariant points $x\in\mathbb{R}^\I_+$ of the fluid model. Before we state our result, we first proceed in an informal manner.  Equate the total rate into node $i$ with the total rate out of node $i$. That is,
\begin{equation}\label{ie}
\gamma_i=\mu_i R_i(x).
\end{equation}
Thus, we have that for the points on the invariant manifold (i.e.\ the set of the invariant points), $\rho_i= R_i(x)$. Using the definition of a bottleneck and keeping in mind that we assume node 1 to be the bottleneck, we now describe the invariant points.
We know by \eqref{In:Bot.AS} that $\frac{\rho_1}{K_1}>\frac{\rho_2}{K_2}$,
which yields
\begin{equation*}
\frac{K_1}{K_2}<\frac{\rho_1}{\rho_2}=\frac{R_1(x)}{R_2(x)}.
\end{equation*}
Thus, by combining the last inequality and the definition of the service allocation function \eqref{eq:ServiceAllocation}, we have that the following inequality holds
\begin{equation*}
\min\{ {x_2},{K_2}\}<\frac{K_2}{K_1}\min\{ {x_1},{K_1}\}\leq K_2.
\end{equation*}
The last inequality implies that for all invariant points $x=(x_1,x_2)$ of the fluid model, we have that
$
x_2<K_2.
$
Thus, solving \eqref{ie} for $x_2$ now yields
$
x_2=\frac{\rho_2}{\rho_1}\min\{ {x_1},{K_1}\}.
$
The invariant manifold is thus given by
\begin{equation}\label{eq:IM}
\mathcal{I}=\bigg\{x\in \mathbb{R}^\I_+: x_2=\frac{\mu_1}{\gamma_1}\frac{\gamma_2}{\mu_2}\min\{ {x_1},{K_1}\}\bigg\}.
\end{equation}
We make the previous arguments rigorous by showing that a sufficient and necessary condition of the fluid queue length to remain constant in time is the initial state lies on the invariant manifold.
\begin{prop}\label{starting on IM}
Let $\ba{Q}(t)$ be a solution of \eqref{eq:ODE}. Then, $\ba{Q}(t)=\ba{Q}(0)$ for all $t\geq 0$ if and only if $\ba{Q}(0)\in \mathcal{I}.$
\end{prop}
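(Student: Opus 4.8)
The plan is to prove both implications by working directly with the ODE $\ba{Q}'(t)=\Psi(\ba{Q}(t))$ from \eqref{eq:ODE}, where $\Psi(x)=\lambda-(I-P^T)(\mu \circ R(x))$. The statement ``$\ba{Q}(t)=\ba{Q}(0)$ for all $t$'' is equivalent, by uniqueness (Lemma~\ref{Existence and uniqueness}) and the Lipschitz continuity of $\Psi$, to ``$\ba{Q}(0)$ is a fixed point of the flow,'' which in turn is equivalent to $\Psi(\ba{Q}(0))=0$. So the real content is: $\Psi(x)=0 \iff x\in\mathcal{I}$, and both directions of this I would extract from the informal computation already carried out in the excerpt, made rigorous.

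For the ``if'' direction, suppose $x\in\mathcal{I}$, i.e.\ $x_2=\frac{\rho_2}{\rho_1}\min\{x_1,K_1\}$ (recall $\ba{Q}(0)\neq 0$, so $x\neq 0$ and $R$ is well-defined). First I would observe that this forces $x_2<K_2$: indeed $\frac{\rho_2}{\rho_1}\min\{x_1,K_1\}\le \frac{\rho_2}{\rho_1}K_1 < K_2$, the last step being exactly the bottleneck inequality \eqref{In:Bot.AS}. Hence $\min\{x_2,K_2\}=x_2$, and plugging into \eqref{eq:ServiceAllocation} gives $R_2(x)=\frac{x_2}{\min\{x_1,K_1\}+x_2}=\frac{\rho_2/\rho_1}{1+\rho_2/\rho_1}=\frac{\rho_2}{\rho_1+\rho_2}=\rho_2$ by the critical loading assumption \eqref{eq:critical loading}. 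Since $R_1(x)+R_2(x)=1$, also $R_1(x)=\rho_1$. Then $\mu_i R_i(x)=\mu_i\rho_i=\gamma_i$, so $\mu\circ R(x)=\gamma$, and $\Psi(x)=\lambda-(I-P^T)\gamma=\lambda-\lambda=0$ by the traffic equation \eqref{eq:trafficVector}. Therefore $x$ is a fixed point and $\ba{Q}(t)\equiv x$.

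For the ``only if'' direction, assume $\ba{Q}(t)=\ba{Q}(0)=:x$ for all $t$, so $\Psi(x)=0$, i.e.\ $(I-P^T)(\mu\circ R(x))=\lambda$. Applying $(I-P^T)^{-1}$ and using \eqref{eq:trafficVector} gives $\mu\circ R(x)=\gamma$, hence $R_i(x)=\gamma_i/\mu_i=\rho_i$ for $i=1,2$; this is precisely \eqref{ie}. Now I would rerun the chain of inequalities in the excerpt: $\frac{R_1(x)}{R_2(x)}=\frac{\rho_1}{\rho_2}>\frac{K_1}{K_2}$ by \eqref{In:Bot.AS}, and combining with the form of \eqref{eq:ServiceAllocation} (note $R_2(x)=\rho_2>0$, so $x_2\neq 0$, and both coordinates of the allocation share the same denominator) yields $\frac{\min\{x_1,K_1\}}{\min\{x_2,K_2\}}>\frac{K_1}{K_2}$, whence $\min\{x_2,K_2\}<\frac{K_2}{K_1}\min\{x_1,K_1\}\le K_2$, so $\min\{x_2,K_2\}=x_2$. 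With $R_2(x)=\rho_2$ this gives $\frac{x_2}{\min\{x_1,K_1\}+x_2}=\rho_2=1-\rho_1$, which rearranges to $x_2=\frac{\rho_2}{\rho_1}\min\{x_1,K_1\}=\frac{\mu_1\gamma_2}{\gamma_1\mu_2}\min\{x_1,K_1\}$, i.e.\ $x\in\mathcal{I}$.

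The only subtlety — and the one place I would be careful — is the degenerate behaviour of $R$ near coordinate axes and the origin: the equivalence ``constant solution $\iff$ $\Psi$ vanishes at the starting point'' needs $\ba{Q}(0)\neq 0$ (guaranteed by hypothesis) and needs $R$ to be continuous along the whole trajectory, which is fine since the trajectory is the single point $x\neq 0$ where $R$ is Lipschitz. I should also note that $\rho_1,\rho_2>0$ (so that $x_1\ne 0$ as well, making $R_1$ well-defined) follows from $\gamma_i>0$, which in turn follows from $\lambda_i>0$ and $(I-P^T)^{-1}=I+\sum_k (P^T)^k$ having nonnegative entries. With these observations the argument is essentially the informal derivation in the excerpt read in both directions, so I do not expect any genuine obstacle beyond this bookkeeping.
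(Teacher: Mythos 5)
Your proposal is correct and follows essentially the same route as the paper: the paper's own (very terse) proof simply appeals to the informal derivation preceding the proposition, and your argument is a rigorous two-directional execution of exactly that computation, reducing constancy of the solution to $\Psi(x)=0$ via uniqueness and then showing $\Psi(x)=0\iff x\in\mathcal{I}$ using \eqref{eq:critical loading}, \eqref{eq:trafficVector} and the bottleneck inequality \eqref{In:Bot.AS}. If anything, your version is more complete than the paper's, whose ``if'' direction merely asserts the conclusion from ``the definition of an invariant point,'' whereas you actually verify that points of $\mathcal{I}$ are zeros of $\Psi$ and attend to the degeneracies at the axes.
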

\begin{proof}
The definition of the invariant manifold of an ODE is the set of all initial states such that the function remains constant; i.e., $\{\ba{Q}(0)\in \mathbb{R}^\I_+: \ba{Q}(t)=\ba{Q}(0), t\geq 0 \}$. Suppose now that $\ba{Q}(0)\in \mathcal{I}$. In this case, by the definition of an invariant point, $\ba{Q}(t)$ should be constant. So, $\ba{Q}(t)=\ba{Q}(0)\in \mathcal{I}.$

Now, supposing that $\ba{Q}(t)=\ba{Q}(0)\in \mathbb{R}_+^2$ for all $t\geq 0,$ then it follows from  the previous discussion that
$\ba{Q}(0)\in \mathcal{I}$.

\end{proof}

Having found the invariant (or equilibrium) points of the fluid model, we now turn to its stability property, namely the convergence of the solutions of fluid model equations to the invariant manifold as time goes to infinity.

\subsection{Convergence to the invariant manifold for the fluid model}
Let $x^\ast$ be the critical point in the invariant manifold where $x_1^\ast=K_1$, which means that $x^\ast_2=\frac{\mu_1}{\gamma_1}\frac{\gamma_2}{\mu_2}K_1$.
For this point, we define the critical workload as (cf.\ \eqref{workload is constant})
\begin{equation}\label{eq:critical}
w^\ast:= \beta^T (I-P^T)^{-1}x^\ast=\tau^T x^\ast.
\end{equation}
In order to prove a SSC property based on the critical workload level $w^\ast,$ we define a lifting map, $\Delta:\mathbb{R}_+\rightarrow \mathbb{R}_+^\I,$ as follows:
\begin{equation}\label{d2}
\begin{aligned}
\Delta_1(w)&:=\frac{\min\{w,w^\ast\}}{w^\ast}K_1+\frac{\max\{w-w^\ast,0\}}{\tau_1}, \\
\Delta_2(w)&:=\frac{\min\{w,w^\ast\}}{w^\ast} \frac{\rho_2 K_1}{\rho_1}.
\end{aligned}
\end{equation}
Note that the lifting map is Lipschitz continuous with constant
$C_1=\max\{2\frac{K_1}{w^\ast}+\mu_1, 2C_2\}$ where $C_2= \frac{\mu_1 K_1}{\lambda_1 w^\ast} \max_i \frac{\lambda_i}{\mu_i}$. In the sequel, we show that fluid model solution converges to the invariant manifold as $t$ goes to infinity.
%
\begin{thm}[Convergence to the invariant manifold for the fluid model]\label{Thm: Convergence of the fluid model}
If $\ba{Q}(0)=(\ba{Q}_1(0),\ba{Q}_2(0))\in[0,M]^2$ for some $M>0,$ then for any $\epsilon >0,$ there exists a $t_0 \geq 0$ (independent of $M$) such that
\begin{equation}\label{eq:Eq}
\sup_{\ba{Q}(0)\in[0,M]^2}|\ba{Q}(t)-\Delta\W_{Tot}(0)|\leq \epsilon,
\end{equation}
for $t>t_0$ and $\Delta\W_{Tot}(0)$ is an invariant state.
\end{thm}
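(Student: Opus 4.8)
The plan is to exploit the fact, established in Proposition~\ref{workload is constant}, that $\W_{Tot}(t)\equiv w:=\W_{Tot}(0)$ along any fluid solution, so that the target state $\Delta\W_{Tot}(0)=\Delta(w)$ is fixed and lies on $\mathcal I$ (one checks directly from \eqref{d2} and \eqref{eq:IM} that $\Delta(w)\in\mathcal I$ for every $w$, and that $\tau^T\Delta(w)=w$, so $\Delta(w)$ is the unique invariant point with the correct workload). Thus the theorem reduces to: the one-dimensional ODE \eqref{eq:ODE}, restricted to the level set $\{\ba Q:\tau^T\ba Q=w\}$, converges to its unique fixed point $\Delta(w)$, and does so at a rate uniform over $w$ in the compact range coming from $\ba Q(0)\in[0,M]^2$. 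First I would parametrise the level set by $\ba Q_1$ alone: since $\tau^T\ba Q(t)\equiv w$, we have $\ba Q_2(t)=(w-\tau_1\ba Q_1(t))/\tau_2$, so the whole dynamics is captured by the scalar function $t\mapsto \ba Q_1(t)$, governed by $\ba Q_1'(t)=\Psi_1(\ba Q_1(t),(w-\tau_1\ba Q_1(t))/\tau_2)=:g_w(\ba Q_1(t))$.

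Next I would analyse $g_w$ region by region, as flagged in the introduction to Section~\ref{sec: Fluid analysis}: the map $R(\cdot)$ in \eqref{eq:ServiceAllocation} is piecewise-rational with breakpoints at $\ba Q_i=K_i$, so on each of the finitely many regions cut out by $\{\ba Q_1\lessgtr K_1\}$, $\{\ba Q_2\lessgtr K_2\}$ the function $g_w$ is a ratio of affine functions of $\ba Q_1$, hence monotone, and the scalar ODE is explicitly integrable there. Using the bottleneck inequality \eqref{In:Bot.AS} one shows the invariant point $\Delta_1(w)$ is globally attracting for $g_w$: $g_w(\ba Q_1)>0$ for $\ba Q_1<\Delta_1(w)$ and $g_w(\ba Q_1)<0$ for $\ba Q_1>\Delta_1(w)$ (this is where the sign/monotonicity bookkeeping must be done carefully, and where the two regimes $w\le w^\ast$ versus $w>w^\ast$ in \eqref{d2} genuinely differ). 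A time change — replacing $t$ by $\int_0^t(\min\{\ba Q_1(s),K_1\}+\min\{\ba Q_2(s),K_2\})\,ds$, which clears the denominator of $R$ — turns $g_w$ into a piecewise-\emph{linear} vector field, for which the explicit solution and an exponential (or at worst linear-then-exponential) contraction estimate toward $\Delta_1(w)$ are immediate; one then checks the time change is bi-Lipschitz with constants uniform in $w$, so convergence in the new clock transfers to convergence in $t$ with a uniform $t_0$.

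Finally I would assemble the uniformity. For $\ba Q(0)\in[0,M]^2$ the conserved workload satisfies $w=\tau^T\ba Q(0)\in[0,\,\tau^T(M,M)]$, a compact interval, and on each region the explicit bound gives a decay of $|\ba Q_1(t)-\Delta_1(w)|$ of the form $C(w)$ times something going to $0$ as $t\to\infty$; taking the supremum of $C(w)$ over the compact $w$-range and using the Lipschitz continuity of $\Delta$ (constant $C_1$ as stated) to pass back from the $\ba Q_1$-coordinate to the full vector $\ba Q=(\ba Q_1,(w-\tau_1\ba Q_1)/\tau_2)$ yields \eqref{eq:Eq} with $t_0$ depending only on $\epsilon$, not on $M$ — the $M$-independence being exactly the point, and it works because the attraction rate of $g_w$ can be bounded below uniformly over the relevant $w$'s even though the travel \emph{distance} grows with $M$. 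I expect the main obstacle to be the region-by-region case analysis: tracking which of the (up to four) regions the trajectory starts in, showing it can cross each boundary at most once and in the right direction, and verifying that the $M$-independent lower bound on the drift survives in the region $\ba Q_1>K_1$ (where $R_1$ saturates and the drift toward $\Delta_1(w)$ comes entirely from the $1/\tau_1$ term in $\Delta_1$), together with checking the $w\le w^\ast$ boundary case where $\Delta_1(w)\le K_1$ and the second region $\{\ba Q_1>K_1\}$ is traversed only transiently.
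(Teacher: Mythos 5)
Your proposal follows essentially the same route as the paper's proof: the conserved workload $\tau^T\ba Q(t)=\W_{Tot}(0)$ is used to reduce to a scalar problem, a time change clearing the denominator of $R$ turns the dynamics into a piecewise-linear ODE (the paper writes this as $\ba Q(t)=y(G(t))$ with $y'=\Xi(\min\{y_1,K_1\},\min\{y_2,K_2\})^T$), and the convergence is then read off from explicit region-by-region solutions whose drift signs are controlled by the bottleneck inequality \eqref{In:Bot.AS}, with the same $\W_{Tot}(0)\lessgtr w^*$ case split. The only caveat, which your sketch shares with the paper's own argument, is that the uniformity is really over $\ba Q(0)$ in a \emph{fixed} compact set (the exponential prefactor, and hence $t_0$, does grow with $M$), so the parenthetical ``independent of $M$'' should be read with that understanding in both versions.
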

\begin{sproof}
Here, we give a sketch of the proof. The complete proof is extended in the rest of this section. The first step is to define a function $y(\cdot)$ and a function $G(\cdot),$ and to show that $y(\cdot)$ can be interpreted as a time-change of $\ba{Q}(t),$ namely $\ba{Q}(t)=y(G(t)).$ Then, we show that the convergence of the time changed version implies the convergence of the original function $\ba{Q}(\cdot).$
To this end,
let $\Xi=\{\xi_{ij}\}$, $i,j \in \{1,2\}$ be the matrix
\begin{equation*}
\Xi=\begin{bmatrix}
\lambda_1+\mu_1p_{11}-\mu_1&\lambda_1+\mu_2p_{21}\\
\lambda_2+\mu_1p_{12}&\lambda_2+\mu_2p_{22}-\mu_2
\end{bmatrix}.
\end{equation*}
Define a function $y(\cdot):[0,\infty) \rightarrow [0,\infty)^2$ such that
$y_i(0)=\ba{Q}_i(0)$ and
\begin{equation}\label{eq:System ODE}
y'(t) = \Xi \Big( \min\{y_1(t),K_1\}, \min\{y_2(t),K_2\} \Big)^T .
\end{equation}
We shall show that the above-defined function can be interpreted as a time-change of $\ba{Q}(t).$
Let $G(\cdot):[0,\infty) \rightarrow [0,\infty)$ be the solution of
\begin{equation*}
G'(t)=\frac{1}{\sum_{i=1}^{\I} \min\{y_i(G(t)),K_i\}}.
\end{equation*}
Note that $G(\cdot)$ is continuous and that
\begin{equation*}
G(t)=\int_{0}^{t}
\frac{1}{\sum_{i=1}^{\I} \min\{y_i(G(s)),K_i\}} ds
\geq \frac{1}{\sum_{i=1}^{\I}  K_i}t.
\end{equation*}
This means that the function $G(\cdot)$ is strictly increasing and unbounded in time, which implies that $G(\cdot)$ is also invertible.
The original function $\ba{Q}(t)$ can be interpreted as
$\ba{Q}_i(t)=y_i(G(t)),$ for $t>0.$
To see this,
\begin{equation*}
\begin{split}
\ba{Q}'(t)=y'(G(t))G'(t)
&=\Xi \Big(\min\{y_1(G(t)),K_1\}, \min\{y_2(G(t)),K_2\}\Big)^T
\frac{1}{\sum_{i=1}^{\I} \min\{y_i(G(t)),K_i\}}\\
&=\Xi \Big(R_1(\ba{Q}(t)), R_2(\ba{Q}(t))\Big)^T=\Psi(\ba{Q}(t)),
\end{split}
\end{equation*}
where the function $\Psi(\cdot)$ is defined in \eqref{eq:psi function}.

The idea now is to prove \eqref{eq:Eq} by showing that $\x(t)$ converges as $t\rightarrow \infty.$ We will formally do so in the remainder of Section~\ref{sec: Fluid analysis}. Assuming that $\x(t)$ converges, we now show that $\ba {Q}(t)$ converges. To see this, if for any $\epsilon>0$ there exists $t_0$ (independent of $M$) such that
\begin{equation}\label{eq:Convergence of time-ch}
\sup_{y(0)\in[0,M]^2}|y(t)-\Delta\W_{Tot}(0)|
\leq \epsilon, {\hskip 25pt} t>t_0
\end{equation}
then
\begin{equation*}
\begin{split}
\sup_{\ba{Q}(0)\in[0,M]^2}|\bar{Q}(t)-\Delta\W_{Tot}(0)|
=&\sup_{y(0)\in[0,M]^2}|y(G(t))-\Delta\W_{Tot}(0)|
=\sup_{y(0)\in[0,M]^2}|y(u)-\Delta\W_{Tot}(0)|,
\end{split}
\end{equation*}
where $u=G(t).$
For $u>G^{-1}(t_0),$ the last term becomes smaller that $\epsilon.$
\end{sproof}

The remainder of the current section is devoted to the proof of \eqref{eq:Convergence of time-ch}. To do it, we first define appropriate spatial regions in which the fluid model solutions have qualitatively different behavior and we solve \eqref{eq:System ODE} in these regions. This is done in Section~\ref{Sub:Solution of time-changed ODE}. In Section~\ref{Local analysis: establishing monotonicity},  these solutions are used to show that both $y_1(t)$ and $y_2(t)$ are monotone in $t$. A crucial observation is that we only need to look at $y_2(t)$, as $\tau_1 y_1(t)+\tau_2y_2(t)=  \bar{W}_{Tot}(0)$  by Proposition~\ref{workload is constant}. This paves the way for a global convergence analysis of $y(t)$, also establishing the desired uniformity. This is done in Section~\ref{Convergence to Invariant Manifold}.


\subsection{Explicit local solutions of time-changed ODE}\label{Sub:Solution of time-changed ODE}
Because we assume that the degree of resource sharing $K$ is finite, the form of \eqref{eq:System ODE} depends on the value of $\x(t).$ For this reason, we need to define the following regions. For
$x=(x_1,x_2)\in\mathbb{R}_+^2,$ we define
\begin{align*}
&\reg_1=\{(x_1,x_2)\in \mathbb{R}^2: x_1\leq K_1, x_2\geq K_2 \},\qquad
\reg_2=\{(x_1,x_2)\in \mathbb{R}^2: x_1\geq K_1, x_2\geq K_2 \},\\
&\reg_3=\{(x_1,x_2)\in \mathbb{R}^2: x_1\geq K_1, x_2\leq K_2\},\qquad
\reg_4=\{(x_1,x_2)\in \mathbb{R}^2: x_1\leq K_1, x_2\leq K_2 \}.
\end{align*}
The following picture shows these regions and the invariant manifold as defined in \eqref{eq:IM}.
\begin{figure}[H]\label{fig:Regions}
\centering
	\includegraphics[scale=0.3]{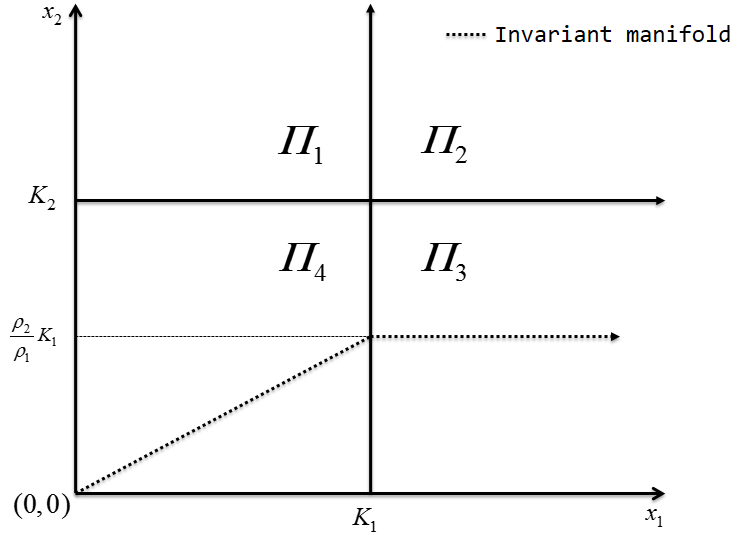}
	\caption{The regions $\reg_i$ and the invariant manifold.}	
\end{figure}

Let $\x(0)=(\x_1(0),\x_2(0))^T\in\mathbb{R}_+^2.$ We solve the time-changed ODE, which is given by \eqref{eq:System ODE}, in regions $\reg_3$ and $\reg_4$ (considering these two regions only is sufficient for our purposes).
It is useful to observe the relations between the coefficients of matrix $\Xi$ in \eqref{eq:System ODE}, which we use later.
We know by the definition of the total arrival rate that
\begin{equation}\label{eq:arrival rate 2}
\rho_2(1-p_{22})=\frac{\lambda_2}{\mu_2}+\frac{p_{12}\gamma_1}{\mu_2}.
\end{equation}
The constant $\xi_{22}$ can be expressed as
\begin{equation}\label{eq:relation1}
\begin{split}
\xi_{22}=\lambda_2+\mu_2(p_{22}-1)
\overset{\eqref{eq:arrival rate 2}}=
\lambda_2-\frac{1}{\rho_2}(\lambda_2+p_{12}\gamma_1)
=-\frac{\rho_1}{\rho_2} \xi_{21}<0.
\end{split}
\end{equation}
In a similar way, we can obtain
\begin{equation}\label{eq:relation2}
\xi_{11}=-\frac{\rho_2}{\rho_1} \xi_{12}<0.
\end{equation}
By definition \eqref{eq:Total time}, $\tau_2$ can be written as $\tau_2(1-p_{22})=\beta_2+p_{21}\tau_1.$
Also, by \eqref{eq:trafficVector} and \eqref{eq:Total time}, we have that
$\tau^T\lambda=\beta^T(I-P^T)^{-1}\lambda=\beta^T\gamma=1.$
Combining the previous two equations, we get
\begin{equation*}\label{eq:relation3}
\xi_{12}=-\frac{\tau_2}{\tau_1} \xi_{22}.
\end{equation*}
Last, by \eqref{eq:relation1} and \eqref{eq:relation2} we have that
\begin{equation*}\label{eq:relation4}
\xi_{21}=-\frac{\tau_1}{\tau_2} \xi_{11}.
\end{equation*}


\subsubsection{Solution in region $\reg_3$}\label{Subsec:Solution in region 3}
Assuming that $y(s)$ is in region $\reg_3$ for $s\in [0,t]$, we can directly solve the second equation of system \eqref{eq:System ODE} since it is independent of $y_1(t)$:
\begin{equation*}
y_2'(t)=\xi_{21}K_1+\xi_{22}y_2(t).
\end{equation*}
Then, by using \eqref{eq:relation1}, the solution is given by
\begin{equation*}\label{explicit solution y'2 in region 3}
y_2(t)=(y_2(0)-\frac{\rho_2}{\rho_1}K_1)
\exp\{\xi_{22}t\}+
\frac{\rho_2}{\rho_1}K_1.
\end{equation*}
Now, we can easily obtain the solution of the first equation of system \eqref{eq:System ODE}. By the relations between the coefficients of \eqref{eq:System ODE}, this solution is given by
\begin{equation}\label{eq:explicit solution y'1 in region 3}
y_1(t)=-\frac{\tau_2}{\tau_1}
(y_2(0)-\frac{\rho_2}{\rho_1}K_1)
\exp\{\xi_{22}t\}+
\frac{\W_{Tot}(0)-w^*}{\tau_1}+K_1.
\end{equation}


\subsubsection{Solution in region $\reg_4$}\label{Subsec:Solution in region 4}
Assuming that {$y(s)$ is in region $\reg_4$ for $s\in [0,t]$}, the system given by \eqref{eq:System ODE} can
be written as
\begin{equation}\label{eq:ODE_Vector}
y'(t)=\Xi y(t)^T.
\end{equation}
The eigenvalues of $\Xi$ are $\alpha_1=0$ and
$\alpha_2=\xi_{11}+\xi_{22}=\lambda_1+\mu_1(p_{11}-1)+\lambda_2+\mu_2(p_{22}-1)<0.$
Let $V_1$, $V_2$ be the corresponding eigenvectors; i.e.,
$V_1=
\begin{pmatrix}
1\\
-\frac{\xi_{11}}{\xi_{12}}
\end{pmatrix}$  and
$V_2=
\begin{pmatrix}
1\\
\frac{\alpha_2-\xi_{11}}{\xi_{12}}
\end{pmatrix}=
\begin{pmatrix}
1\\
\frac{\xi_{21}}{\xi_{11}}
\end{pmatrix}.$\\
Using the relations between the coefficients of matrix $\Xi$, the solution of \eqref{eq:ODE_Vector} is given by	
\begin{align}
y_1(t)=&c_2+c_2'\exp\{\alpha_2t\},\label{eq:solution y1 in r4}\\
y_2(t)=&c_2\frac{\rho_{2}}{\rho_{1}}
-c_4'\frac{\tau_{1}}{\tau_{2}}\exp\{\alpha_2t\},\label{eq:solution y2 in r4}\nonumber
\end{align}
with $c_2=\frac{\W_{Tot}(0)}{w^*}K_1$ and $c_2'=-\Big[y_2(0)-\frac{\rho_2}{\rho_1}y_1(0)\Big]
\Big[\frac{\tau_2\rho_1}
{\tau_1\rho_1+\tau_2\rho_2}\Big].$

Having found the solution of \eqref{eq:System ODE} in each region $\reg_3$ and $\reg_4$, we observe that the 2-dimensional equation can be reduced to a 1-dimensional equation since $\W_{Tot}(0)=\tau_1\x_1(t)+\tau_2\x_2(t),$ for $t\geq 0.$ Now, it is enough to show the convergence of this equation. To see this, define $x(\cdot)$ as
\begin{equation}\label{eq:reduced}
x(t):=y_2(t)=\frac{\W_{Tot}(0)}{\tau_2}-\frac{\tau_1\x_1(t)}{\tau_2},
\end{equation}
and its derivative as
\begin{equation}\label{eq:derivative of reduced}
x'(t)=-\frac{\tau_1\x'_1(t)}{\tau_2}.
\end{equation}

Using the observation that we can reduce the dimension by one and the solutions to system \eqref{eq:System ODE}, we show that the fluid model
solutions converge to an equilibrium state uniformly for all initial states within a compact set. First, we find the sign of the derivative of the above reduced equation. Then, as the limiting point depends on the sign
of the quantity $\W_{Tot}(0)-w^*,$ we have to distinguish between the following three cases: the total workload in the fluid model is i) greater than, ii) less than or iii) equal to the critical workload.


\subsection{Local analysis: establishing monotonicity}\label{Local analysis: establishing monotonicity}
\label{sec:The sign of the derivative of the reduced equation}
By \eqref{eq:reduced}, it is clear that we need to study only the behaviour of $y_1(t).$ In the sequel, we find the sign of \eqref{eq:derivative of reduced} in each region $\reg_i,$ for $i=1,\ldots,4$.

In region $\reg_1$, we know that $\x_1(t)\leq K_1$. By \eqref{eq:System ODE} and \eqref{eq:relation2}, we have that
\begin{equation*}\label{eq:bount in 1}
\begin{split}
x'(t)&=-\frac{\tau_1\x'_1(t)}{\tau_2}=-\frac{\tau_1}{\tau_2}(\xi_{11}\x_1(t)+\xi_{12}K_2)
=- \frac{\tau_1}{\tau_2}\xi_{12}(-\frac{\rho_2}{\rho_1}\x_1(t)+K_2)\\
&\leq -\frac{\tau_1}{\tau_2}\xi_{12}(-\frac{\rho_2}{\rho_1}K_1+K_2)=-\epsilon_1<0,
\end{split}
\end{equation*}
where $\epsilon_1=\frac{\tau_1}{\tau_2}\xi_{12}(-\frac{\rho_1}{\rho_2}K_1+K_2),$ which is strictly positive by \eqref{In:Bot.AS} and the fact that $\xi_{12}>0$.
Therefor we conclude that the derivative of $x(t)$ is strictly negative in region $\reg_1.$

In region $\reg_2,$ by \eqref{eq:System ODE} and \eqref{eq:relation2}, we have that
\begin{equation*}
\begin{split}
x'(t)
=-\frac{\tau_1}{\tau_2}(\xi_{11}K_1+\xi_{12}K_2)
=-\frac{\tau_1\xi_{12}}{\tau_2}(-\frac{\rho_2}{\rho_1}K_1+K_2)
=-\epsilon_1<0.
\end{split}
\end{equation*}
Thus, the derivative of $x(t)$ is strictly negative for all $\x_1(0)$ in regions $\reg_1$ and $\reg_2.$ In other words, the trajectory of $x(t)$ leaves regions $\reg_1$ and $\reg_2$ after a finite time. Now, we move to the regions where the invariant points lie, i.e., $\reg_3$ and $\reg_4$.

In region $\reg_3$, by \eqref{eq:critical} and \eqref{eq:explicit solution y'1 in region 3}, we obtain
\begin{equation*}
\begin{split}
x'(t)=\frac{\tau_1}{\tau_2}
\xi_{22}(\frac{\W_{Tot}(0)}{\tau_1}-y_1(0)-\frac{\tau_2}{\tau_1}\frac{\rho_2}{\rho_1}K_1)
\exp\{\xi_{22}t\}
= \frac{\tau_1}{\tau_2}
\xi_{22}(\frac{\W_{Tot}(0)-w*}{\tau_1}+K_1-y_1(0))
\exp\{\xi_{22}t\}.
\end{split}
\end{equation*}
We saw in \eqref{eq:relation1} that $\xi_{22}<0$. We therefore have that in $\reg_3$,
\begin{equation}\label{derivative in 3}
x'(t)= \twopartdef { >0} {y_1(0)>\frac{\W_{Tot}(0)-w*}{\tau_1}+K_1,} {<0} {y_1(0)<\frac{\W_{Tot}(0)-w*}{\tau_1}+K_1,}
\end{equation}
and $x'(t)=0$ if $y_1(0)=\frac{\W_{Tot}(0)-w*}{\tau_1}+K_1.$

In region $\reg_4,$ by \eqref{eq:critical} and \eqref{eq:solution y1 in r4}, we have that
\begin{equation*}
\begin{split}
x'(t)&=-\frac{\tau_1\x'_1(t)}{\tau_2}
=-\frac{\tau_1}{\tau_2}c_4'\alpha_2\exp\{\alpha_2t\},
\end{split}
\end{equation*}
where
\begin{equation*}
\begin{split}
c_4'=
-\Big[\frac{\W_{Tot}(0)}{\tau_2}-\frac{\tau_1}{\tau_2}y_1(0)
-\frac{\rho_2}{\rho_1}y_1(0)\Big]
\Big[\frac{\tau_2\rho_1}
{\tau_1\rho_1+\tau_2\rho_2}\Big]
&=-\Big[\frac{\W_{Tot}(0)}{\tau_2}-\frac{\tau_1\rho_1+\tau_2\rho_2}{\tau_2\rho_1}y_1(0)\Big]
\Big[\frac{\tau_2\rho_1}
{\tau_1\rho_1+\tau_2\rho_2}\Big]\\
&=-\Big[\W_{Tot}(0)-
\frac{w^*}{K_1}y_1(0)\Big]
\Big[\frac{\rho_1}
{\tau_1\rho_1+\tau_2\rho_2}\Big].
\end{split}
\end{equation*}
Recall that from Section~\ref{Subsec:Solution in region 4}, we have that $\alpha_2<0$ and thus in $\reg_4$,
\begin{equation}\label{derivative in 4}
x'(t)= \twopartdef { >0} {y_1(0)>\frac{\W_{Tot}(0)}{w*}K_1,} {<0} {y_1(0)<\frac{\W_{Tot}(0)}{w*}K_1,}
\end{equation}
and $x'(t)=0$ if $y_1(0)=\frac{\W_{Tot}(0)}{w*}K_1.$

Combining \eqref{derivative in 3} and \eqref{derivative in 4}, and keeping in mind that $\Delta_1(w)=\frac{\min\{w,w^\ast\}}{w^\ast}K_1+\frac{\max\{w-w^\ast,0\}}{\tau_1}$ we have that in $\reg_3 \cup \reg_4$,
\begin{equation*}\label{derivative in 4}
x'(t)= \twopartdef { >0} {y_1(0)>\Delta_1(\W_{Tot}(0)),} {<0} {y_1(0)<\Delta_1(\W_{Tot}(0)),}
\end{equation*}
and $x'(t)=0$ if $y_1(0)=\Delta_1(\W_{Tot}(0)).$



\subsection{Global analysis: convergence to Invariant Manifold}\label{Convergence to Invariant Manifold}

We are now ready to connect all pieces. From the previous section, we know that $x(t)=y_2(t)$ must be smaller than $K_2$ after a finite time, {thus exiting regions $\Pi_1$ and $\Pi_2$}. Therefore, we can focus on the remaining two regions. In order to do so, we need to consider whether $y_1(t)$ will eventually be larger than, smaller than, or equal to $K_1$. This leads to three cases, treated separately in the remainder of this section.

\phantomsection
\addcontentsline{toc}{subsubsection}{Case 1}
\paragraph{Case 1: $\W_{Tot}(0)-w^*> 0$.} In this case the invariant point (limiting point) lies in region $\reg_3.$ If $\x_1(0)\in \reg_3,$ then we know that $\x_1(0)\geq K_1.$ By \eqref{eq:explicit solution y'1 in region 3}, we have that
\begin{equation*}
\begin{split}
y_1(t)=&-\frac{\tau_2}{\tau_1}
(\frac{\W_{Tot}(0)}{\tau_2}-\frac{\tau_1 \x_1(0)}{\tau_2}-\frac{\rho_2}{\rho_1}K_1)
\exp\{\xi_{22}t\}+
\frac{\W_{Tot}(0)-w^*}{\tau_1}+K_1\\
\geq & (- \frac{\W_{Tot}(0)}{\tau_1}+K_1+\frac{\tau_2\rho_2}{\tau_1\rho_1}K_1)\exp\{\xi_{22}t\}+\frac{\W_{Tot}(0)-w^*}{\tau_1}+K_1\\
\geq & (- \frac{\W_{Tot}(0)}{\tau_1}+K_1+\frac{w*}{\tau_1}-K_1)\exp\{\xi_{22}t\}+ \frac{\W_{Tot}(0)-w^*}{\tau_1}+K_1\\
\geq & \frac{\W_{Tot}(0)-w*}{\tau_1}(1-\exp\{\xi_{22}t\})+K_1 \geq K_1.
\end{split}
\end{equation*}

If $\x_1(0)\in \reg_4,$ then $\x_1(0)\leq K_1.$ Also, by the assumption that $\W_{Tot}(0)-w^*> 0$ and the definition of the lifting map \eqref{d2}, we have that $\x_1(0)\leq K_1<\Delta_1\W_{Tot}(0).$ This implies that $x(0)\geq \frac{\W_{Tot}(0)}{\tau_2}-\frac{\tau_1K_1}{\tau_2}$ and $x(t)$ is strictly decreasing. In the sequel, we show that there exists a time $t^*$ such that $\x_1(t^*)=K_1.$ This means that the function $x(t)$ lies in region $\reg_3$ after that time.
It is enough to prove that the equation $\x_1(t)=K_1,$ has a positive solution.
Note that by \eqref{eq:solution y1 in r4}, we have that
\begin{equation*}
\begin{split}
y_1(t)&=\frac{\W_{Tot}(0)}{w^*}K_1
-\Big[\W_{Tot}(0)-
\frac{w^*}{K_1}y_1(0)\Big]
\Big[\frac{\rho_1}
{\tau_1\rho_1+\tau_2\rho_2}\Big]
\exp\{\alpha_2 t\}.
\end{split}
\end{equation*}
Now, setting $\x_1(t)= K_1,$ the previous equation becomes
\begin{equation}\label{eq:t*}
\begin{split}
\frac{\W_{Tot}(0)}{w^*}K_1-K_1 &=
\Big[\W_{Tot}(0)-
\frac{w^*}{K_1}y_1(0)\Big]
\Big[\frac{\rho_1}
{\tau_1\rho_1+\tau_2\rho_2}\Big]
\exp\{\alpha_2 t\}.
\end{split}
\end{equation}
We argue that \eqref{eq:t*} is satisfied by a $t\geq0$ as follows. Observe that by \eqref{eq:critical}
the quantity
$\frac{\rho_1}{\tau_1\rho_1+\tau_2\rho_2} $
is equal to $\frac{K_1}{w^*},$ and by the assumption that $\x_1(0)\in \reg_4$, we have that $\x_1(0)\leq K_1$. Now, we can obtain the following inequality
\begin{equation*}
\Big[\W_{Tot}(0)-
\frac{w^*}{K_1}y_1(0)\Big]
\Big[\frac{\rho_1}
{\tau_1\rho_1+\tau_2\rho_2}\Big]\geq
\Big[ \frac{\W_{Tot}(0)}{w^*}K_1-K_1\Big],
\end{equation*}
which proves the statement. Note that if the total workload is equal to the critical workload, then \eqref{eq:t*} does not have a positive solution since \eqref{eq:t*} would imply that $\exp\{\alpha_2 t\}=0$. It would only be satisfied by $t=0$. This means, that if $\x_1(0)\in \reg_4$ and $\W_{Tot}(0)-w^*=0,$ then the function $x(t)$ remains in region $\reg_4$ for ever.
Since in this first case $\W_{Tot}(0)-w^*> 0$, we have that $\frac{\W_{Tot}(0)}{w^*}K_1-K_1>0$. Thus, by combining the fact that $\x_1(0)\leq K_1$ and the previous display, we have shown that
$\W_{Tot}(0)-
\frac{w^*}{K_1}y_1(0)> \W_{Tot}(0)-w^*>0.$ Combining these arguments leads to the conclusion that the equation $\x_1(t^*)=K_1$ has a (unique) positive solution, say $t^*$. Therefore, $y(t)\in \reg_3$ for $t>t^*$. In other words, it is enough to prove that for $\W_{Tot}(0)-w^*>0$, the function $x(t)$ converges to a point in region $\reg_3$.

We now show that it converges to the invariant manifold in region $\reg_3$. By \eqref{d2}, we have that $\Delta_2\W_{Tot}(0)=\frac{\rho_2}{\rho_1}K_1.$ By \eqref{eq:explicit solution y'1 in region 3} and \eqref{eq:reduced},
\begin{equation*}
|x(t)-\Delta_2\W_{Tot}(0)|\leq \big|(\frac{\W_{Tot}(0)}{\tau_2}-\frac{\tau_1 \x_1(0)}{\tau_2}-\frac{\rho_2}{\rho_1}K_1)\big|
\exp\{\xi_{22}t\}
\end{equation*}
and recall that $\xi_{22}<0.$ Also, for any closed and bounded interval of the form $[0,M]$ for $M>0$, we have that the quantity $|(\frac{\W_{Tot}(0)}{\tau_2}-\frac{\tau_1 \x_1(0)}{\tau_2}-\frac{\rho_2}{\rho_1}K_1)|$ is uniformly bounded by
$\sup_{\x_1(0)\in [0,M]} |(\frac{\W_{Tot}(0)}{\tau_2}-\frac{\tau_1 \x_1(0)}{\tau_2}-\frac{\rho_2}{\rho_1}K_1)|.$ That is, the convergence is uniform for any initial state in a compact set.

\phantomsection
\addcontentsline{toc}{subsubsection}{Case 2}
\paragraph{Case 2: $\W_{Tot}(0)-w^*< 0.$}
Adapting the previous case, we first show that if $\W_{Tot}(0)-w^*<0$ and $\x_1(0)\in \reg_4,$ the the function $x(t)$ remains for ever in region $\reg_4.$ To see this,
by \eqref{eq:solution y1 in r4} we have that
\begin{equation*}
\begin{split}
y_1(t)&=\frac{\W_{Tot}(0)}{w^*}K_1
-\Big[\W_{Tot}(0)-
\frac{w^*}{K_1}y_1(0)\Big]
\frac{K_1}
{w^*}
\exp\{\alpha_2 t\}.
\end{split}
\end{equation*}
Observing that $\x_1(0)\leq K_1$ in $\reg_4$, we derive the following inequality
\begin{equation*}
\begin{split}
y_1(t)\leq \frac{\W_{Tot}(0)}{w^*}K_1
+\Big[(1-\frac{\W_{Tot}(0)}{w^*})K_1\Big]
\exp\{\alpha_2 t\}.
\end{split}
\end{equation*}
Note that the term $(1-\frac{\W_{Tot}(0)}{w^*})$ is positive by the assumption $\W_{Tot}(0)-w^*< 0$ and that $\exp\{\alpha_2 t\}\leq 1$. Combining these  three facts, we have that $y_1(t)\leq K_1$.

Now, we show that if the process starts in region $\reg_3$ and $\W_{Tot}(0)-w^*< 0,$ then there exists a finite time $t^{**}$ such that $x(t)\in \reg_4$ after that time. Again, here we prove that the equation $\x_1(t^*)=K_1$ has a positive solution. Then, the result follows by observing that $\x_1(0)\geq K_1>\Delta_1\W_{Tot}(0),$ and in that case $x(t)$ is an increasing function. By \eqref{eq:explicit solution y'1 in region 3}, we have that
\begin{equation*}
\begin{split}
y_1(t)&=-\frac{\tau_2}{\tau_1}
(\frac{\W_{Tot}(0)}{\tau_2}-\frac{\tau_1\x_1(0)}{\tau_2}-\frac{\rho_2}{\rho_1}K_1)
\exp\{\xi_{22}t\}+
\frac{\W_{Tot}(0)-w^*}{\tau_1}+K_1\\
&= -\big(\frac{\W_{Tot}(0)-w^*}{\tau_1}+K_1-\x_1(0)\big)\exp\{\xi_{22}t\}+\frac{\W_{Tot}(0)-w^*}{\tau_1}+K_1.
\end{split}
\end{equation*}
Setting $\x_1(t)=K_1,$ we obtain
\begin{equation}\label{eq:t**}
\begin{split}
\frac{\W_{Tot}(0)-w^*}{\tau_1}&=\big(\frac{\W_{Tot}(0)-w^*}{\tau_1}+K_1-\x_1(0)\big)\exp\{\xi_{22}t\}.
\end{split}
\end{equation}
To show that the previous equation has a positive solution, it is enough to show that
\begin{equation*}
\begin{split}
\frac{\tau_1}{\W_{Tot}(0)-w^*}\big(\frac{\W_{Tot}(0)-w^*}{\tau_1}+K_1-\x_1(0)\big)
\geq 1.
\end{split}
\end{equation*}
Recall that $\x_1(0)\geq K_1$ and that$\W_{Tot}(0)-w^*<0$. We can now derive the previous inequality by observing that
\begin{equation*}
\begin{split}
\frac{\tau_1}{\W_{Tot}(0)-w^*}\big(\frac{\W_{Tot}(0)-w^*}{\tau_1}+K_1-K_1\big)\geq 1.
\end{split}
\end{equation*}
Analogously with the previous case, we note that if the total workload is equal to the critical workload, \eqref{eq:t**} does not have a positive solution. This means, that if $\x_1(0)\in \reg_3$ and $\W_{Tot}(0)-w^*=0,$ then the function $x(t)$ remains in region $\reg_3$ for ever.

For the convergence to the invariant manifold, we have that $\Delta_2\W_{Tot}(0)=\frac{\rho_2}{\rho_1}\frac{\W_{Tot}(0)}{w^*}K_1$ by \eqref{d2}. Moreover, \eqref{eq:solution y1 in r4} and \eqref{eq:reduced} lead to
\begin{equation*}
|x(t)-\Delta_2\W_{Tot}(0)|\leq
\big|\frac{\tau_1}{\tau_2}(\frac{\W_{Tot}(0)w^*}{w^*K_1}-
y_1(0))\big|
\exp\{\alpha_2t\}
\end{equation*}
and recall that $\alpha_2 <0$. The uniform convergence in a compact set for any initial state follows for the same reason as in previous case.

\phantomsection
\addcontentsline{toc}{subsubsection}{Case 3}
\paragraph{Case 3: $\W_{Tot}(0)-w^*= 0.$}
In this case, the convergence follows from the comments we made in the previous two cases. If $\W_{Tot}(0)-w^*= 0,$ then the function $x(\cdot)$ always stays in the region where $y_1(0)$ lies (see comments after \eqref{eq:t*} and \eqref{eq:t**}). As we see, the function converges in regions $\reg_3$ and $\reg_4.$\\

This concludes the proof of Theorem~\ref{Thm: Convergence of the fluid model}, which will be applied to prove a diffusion theorem for the queue length process in the next section.


\section{Diffusion approximations}\label{Sec: Diffusion approximations}
The main objective in this section is to show a state-space collapse property (SSC) for the diffusion queue length process. This yields a diffusion limit theorem for the diffusion-scaled process. To do it, we follow the strategy set up in \citep{bramson1998state}. Let us consider a family of single-server systems indexed by $n\in\mathbb{N},$ where $n$ tends to infinity, with the same basic structure as that of the network described in Section~\ref{sec: Model}. To indicate the position in the sequence of networks, a superscript $n$ will be appended to the network parameters and processes. Diffusion (or central limit theorem) scaling is indicated by placing a hat over a process.
Thus, the well-known diffusion scaling is given by $\wh{\qnp}^n(\cdot)=\frac{1}{n}\qnp^n(n^2\cdot).$
Let $\rho^n=\sum_{i=1}^{\I}\rho_i^n=(\gamma^n)^T\beta$. We set $\gamma^n=(I-P^T)^{-1}\lambda^n$, $\lnn=\lambda(1-\frac{\theta}{n})$, $\mu^n\equiv\mu$, $P^n\equiv P$, and $K_i^n=nK_i,$ where $\theta$ is a positive real number. Thus, we have that
$
\rho^n=1-\frac{\theta}{n}.
$
It is clear that under the critical loading assumption, $\lnn\rightarrow \lambda$ and $n(1-\rho^n)\rightarrow \theta,$ as $n\rightarrow \infty$. These are our heavy traffic assumptions. Furthermore, we assume that $\ba{Q}^n(0)=\frac{1}{n}Q^n(0)\rightarrow \ba{Q}(0),$ where $\ba{Q}(0)$ is a positive constant.
The service allocation function for the $n^{\text{th}}$ model is given by
\begin{equation*}\label{Rfunction}
R^n_i(q)= \twopartdef {  \frac{\min_i\{ {q_i},{nK_i}\}}{\sum_{j=1}^{I}\min\{ {q_j},{nK_j}\}}} {q_i\neq0,} {0} {q_i=0,}
\end{equation*}
where we observe that $R^1_i(\cdot)=R_i(\cdot)$. Recall that $R(\cdot)$ is a Lipschitz-continuous function on $\mathbb R^2_+\backslash\{0\}$ and observe that the following scaling property holds: $R^n(n\cdot)=R(\cdot)$.
In the sequel, we state the technical assumptions that allow us to apply the functional central limit theorem and Bramson's weak law estimates. We assume that for $i=1,\I$,
\begin{equation*}\label{eq:assumptions}
\frac{u_i^n(1)}{n}\rightarrow 0 \quad \text{and} \quad\frac{v^{(1),n}_{ii}(1)}{n}\rightarrow 0,
\end{equation*}
in probability as $n \rightarrow \infty$. In addition, we assume that there exists a function $\eta(\cdot),$ with zero limit at infinity such that for $j>1,$ $i,l\in \{1,2\}$ and $k\in \mathbb{N},$
\begin{equation*}
\E(u_i^n(j)^2 \ind {u_i^n(j)>a})\le\eta(a)  \quad \text{and} \quad
\E(v^{(k),n}_{il}(j)^2 \ind {v^{(k),n}_{il}(j)>a})\le\eta(a).
\end{equation*}
More details about these assumption can be found in \citep{bramson1998state} and \citep{ye2012stochastic}. In the rest of this section, we assume that the previous assumptions are satisfied without refer to them again. The main result in this section is the following
\begin{thm}\label{Thm:main result}
Assume that the diffusion-scaled initial state converges in distribution as $n\to \infty$, i.e.\
$\wh{Q}^n(0) \overset{d} \rightarrow \Delta\wh{W}_{Tot}(0),$
where ``$\overset{d}\rightarrow$" denotes convergence in distribution. Then, the diffusion-scaled stochastic process converges in distribution as $n\to \infty$, i.e.\
\begin{equation*}
\wh{Q}^n(\cdot) \overset{d}\rightarrow \Delta\wh{W}_{Tot}(\cdot),
\end{equation*}
where $\wh{W}_{Tot}(t)$ is a 1-dimensional Brownian motion with drift $-\theta$ and variance
$\sigma^2=\sum_{i=1}^{\I}\lambda_i\sigma_{s_i}^2 + \tau_i^2\lambda_i c_{u_i}^2,$
where $\sigma_{s_i}^2$ denotes the variance of $s_i(j)$ and $c_{u_i}$ denotes the coefficient of variation of $u_i(j).$
\end{thm}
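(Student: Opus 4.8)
The plan is to carry out the two-step Bramson--Williams program announced in the introduction. First, isolate a one-dimensional ``total workload'' that, under diffusion scaling, solves a one-dimensional Skorokhod problem and hence converges to a reflected Brownian motion. Second, use the fluid analysis of Section~\ref{sec: Fluid analysis}, and in particular the uniform convergence to the invariant manifold in Theorem~\ref{Thm: Convergence of the fluid model}, to establish a multiplicative state-space collapse that pins $\wh Q^n$ to the Lipschitz image $\Delta(\wh W^n_{Tot})$. Combining the two with the continuous-mapping and converging-together theorems then gives the assertion.

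\textbf{Step 1: the diffusion-scaled total workload.} Using the work-conservation identity \eqref{d}, so that the cumulative processing effort is $\sum_i T^n_i(t)=t-Y^n_{L_2}(t)$, and the splitting $s_i(j)=v^{(1)}_{ii}(j)+s'_i(j)$ from \eqref{eq:total S requirement}, I would rewrite \eqref{eq:total_workload} in the $n$th network as
\[
W^n_{Tot}(t)=W^n_{Tot}(0)+\sum_{i=1}^{\I}\ \sum_{j=1}^{E^n_i(t)} s_i(j)\ -\ t\ +\ Y^n_{L_2}(t)\ +\ \varepsilon^n(t),
\]
where $\varepsilon^n$ gathers the order-one HL boundary terms coming from \eqref{eq:HL} together with the cumulative discrepancy between the route-based requirements entering $s'_i$ and the freshly sampled immediate service times actually consumed after each routing. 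Along a single customer's realised route these discrepancies telescope, and since $P$ has spectral radius strictly below one the route length has geometric tails, so a weak-law/martingale estimate of Bramson's type gives $\tfrac1n\varepsilon^n(n^2\cdot)\Rightarrow 0$; this is the precise sense in which the dependence of $W_{Tot}$ on future service requirements disappears under diffusion scaling. By \eqref{eq:trafficVector} and \eqref{eq:Total time} the input rate is $\tau^T\lambda^n=\beta^T\gamma^n=\rho^n=1-\theta/n$, so the centred, diffusion-scaled input $\tfrac1n\big(\sum_i\sum_{j\le E^n_i(n^2\cdot)}s_i(j)-n^2\cdot\big)$ converges, by the functional CLT for renewal-reward processes (independent across $i$) under the stated Lindeberg-type moment conditions, to a Brownian motion with drift $-\theta$ and variance $\sigma^2=\sum_i\big(\lambda_i\sigma^2_{s_i}+\tau_i^2\lambda_i c^2_{u_i}\big)$. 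Since \eqref{e} forces $Y^n_{L_2}$ to increase only when $W^n_{Tot}=0$, the pair $(W^n_{Tot},Y^n_{L_2})$ solves, up to $\varepsilon^n$, the one-dimensional Skorokhod problem for this free process, and continuity of the one-dimensional reflection map yields $\wh W^n_{Tot}\Rightarrow\wh W_{Tot}$. The hypothesis $\wh Q^n(0)\overset{d}{\rightarrow}\Delta\wh W_{Tot}(0)$ is consistent with this because, from the LLN for the $s'_i(j)$, $\wh W^n_{Tot}(0)=\tau^T\wh Q^n(0)+o_P(1)$, while $\tau^T\Delta=\mathrm{id}$ by \eqref{d2} and \eqref{eq:critical}.

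\textbf{Step 2: state-space collapse and conclusion.} The fluid model of Section~\ref{sec: Fluid analysis} is well posed (Lemma~\ref{Existence and uniqueness}), its total workload is conserved (Proposition~\ref{workload is constant}), its invariant manifold is \eqref{eq:IM}, the lifting map $\Delta$ of \eqref{d2} is Lipschitz with image in $\mathcal I$, and Theorem~\ref{Thm: Convergence of the fluid model} gives convergence to equilibrium uniformly over initial conditions in compact sets. Together with the moment assumptions of this section --- which yield the FCLT for the exogenous data and Bramson's hydrodynamic weak-law estimates --- these are exactly the inputs needed to invoke the multiplicative state-space-collapse theorem of \citep{bramson1998state} for HL networks, giving, for every $T>0$,
\[
\big\|\,\wh Q^n(\cdot)-\Delta\big(\wh W^n_{Tot}(\cdot)\big)\,\big\|_T\ \overset{P}{\longrightarrow}\ 0 .
\]
Combining this displayed estimate with Step~1, the continuity of $\Delta$, and the continuous-mapping and converging-together lemmas gives $\wh Q^n(\cdot)=\Delta(\wh W^n_{Tot}(\cdot))+o_P(1)\Rightarrow\Delta(\wh W_{Tot}(\cdot))$, which is the theorem.

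\textbf{The main obstacle.} I expect the genuinely new difficulty to sit in Step~1, in controlling $\varepsilon^n$: the quantity in \eqref{eq:total_workload} is not a true workload, since it couples the not-yet-realised route-based future requirements with the freshly resampled immediate service times that the servers actually consume, and one must show that the accumulated resampling error over the $O(n^2)$ routing events in $[0,n^2T]$ becomes negligible after dividing by $n$. The telescoping along realised routes, together with the geometric decay coming from the spectral radius of $P$ being below one, is what makes this work. Once the one-dimensional reflected-Brownian limit of $\wh W^n_{Tot}$ is in hand, Step~2 is essentially a verification that the present layered, limited-processor-sharing model fits Bramson's hypotheses, the only non-classical feature there --- the region-dependent fluid dynamics caused by the finite degree of resource sharing $K$ --- having already been absorbed into Theorem~\ref{Thm: Convergence of the fluid model}.
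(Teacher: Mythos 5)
Your architecture is the same as the paper's: reduce the total workload to a one-dimensional single-server workload driven by the two external renewal streams with i.i.d.\ total requirements $s_i(j)$, obtain the RBM$(-\theta,\sigma^2)$ limit via the FCLT and the one-dimensional reflection map, establish state-space collapse through the fluid analysis, and conclude with the continuous-mapping and converging-together lemmas. The substantive divergence, and the one genuine soft spot, is in your Step~1. The paper does not introduce an error term $\varepsilon^n$: it proves the \emph{exact} pathwise identity $W_{Tot}(t)=W_G(t)$, where $W_G$ is the auxiliary single-server workload in \eqref{eq:second definition of Total W}. This works because the model is constructed so that $v_{il}^{(k+1)}(j)$ \emph{is} the immediate service time the customer consumes at his $k^{\text{th}}$ future visit — there is no resampling — and the proof is a combinatorial relabeling (the sets $\mathcal{A}^i_1,\mathcal{A}^i_2,\mathcal{A}^i_3$ and the partition of $[0,t]$ into subintervals in which each customer is routed at most once). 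Your fallback claim, that a residual resampling discrepancy accumulated over the $O(n^2)$ routing events in $[0,n^2T]$ is killed by a weak-law/martingale estimate after dividing by $n$, is quantitatively false: a sum of $O(n^2)$ independent mean-zero, $O(1)$-variance terms is $O_P(n)$, not $o_P(n)$, so if the discrepancy were real it would survive at the diffusion scale and corrupt the variance $\sigma^2$. The telescoping you mention is not an approximation device — it must be exact, and it is, by the coupling built into the definition of the $v_{il}^{(k)}(j)$. You should commit to that identity rather than hedge with an estimate that cannot deliver.

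Your Step~2 is correct in outline but compresses into one citation the part of the paper that requires model-specific work: one must first show that the shifted fluid-scaled queue length is stochastically bounded at time zero (Lemma~\ref{l2}, proved by contradiction from the workload bound \eqref{eq:Bound for workload}), verify that fluid limits of the shifted processes solve the fluid model including equation \eqref{eq:Fluid busy time} (which uses the scaling property $R^n(n\cdot)=R(\cdot)$ and Lipschitz continuity of $R$ away from the origin), and — the step with no counterpart in \cite{bramson1998state} — establish a uniform fluid approximation for the shifted \emph{total} workload, i.e.\ for the sums $\frac1n\sum_j s_i'(j)$ of future requirements of customers in queue (Propositions~\ref{extra1} and \ref{Prop:extra}). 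Only then does Bramson's Lemma~6.1, combined with Theorem~\ref{Thm: Convergence of the fluid model} and the Lipschitz continuity of $\Delta$, yield \eqref{6}. Flagging these as ``verification'' is acceptable for a sketch, but the future-workload approximation in particular is not covered by the standard HL hypotheses and needs to be proved.
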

The proof of this theorem is given in the end of this section. This theorem can be applied to develop heavy traffic approximations for the joint queue length process, as they are both a piece-wise linear function of a one-dimensional RBM, of which the time-dependent distribution
can be expressed in closed form (in terms of the Gaussian cdf and pdf); cf.\ \cite{chen2001fundamentals}.  To see this, note that the functions $\Delta_1$ and $\Delta_2$ are invertible with inverses
\begin{equation*}
\begin{split}
\Delta_1^{-1}(w)&= \twopartdef {\frac{w w^*}{K_1}} {w<w^*,} {\tau_1(w-K_1)+w^*} {w \geq w^*,}\\
\Delta_2^{-1}(w)&= \twopartdef {\frac{\rho_1 ww^*}{\rho_2K_1}} {w<w^*,} {\infty}  {w \geq w^*.}
\end{split}
\end{equation*}
We know that $\wh{W}_{Tot}(t)$ is a RBM$(-\theta,\sigma^2).$ Let $\wh{Q}(\cdot)$ be the diffusion limit. We have that for $x,y\geq 0,$
\begin{equation*}
\begin{split}
\Prob{(\wh{Q}_1(t)>x, \wh{Q}_2(t)>y)}=
\Prob{(\Delta_1\wh{W}_{Tot}(t)>x, \Delta_2\wh{W}_{Tot}(t)>y)}
=&\Prob{(\wh{W}_{Tot}(t)>\Delta_1^{-1}(x),
\wh{W}_{Tot}>
\Delta_2^{-1}(y))}\\
=&\Prob{(\wh{W}_{Tot}(t)>z)},
\end{split}
\end{equation*}
where $z=\max \{\Delta_1^{-1}(x),\Delta_2^{-1}(y) \}.$ The last expression can be written in terms of the Gaussian cdf and pdf; cf.\ \cite{chen2001fundamentals}.
Also, using a similar coupling argument as in \cite{zhang2008steady}, it can be shown that one can interchange the steady-state and heavy traffic limits in this case.
For space considerations we will leave this as detail to the reader.

The rest of this section is devoted to a proof of Theorem \ref{Thm:main result}. It is organized as follows.
\begin{enumerate}
\item We first prove a heavy traffic limit theorem for the total workload process.
\item After that, we define a family of shifted fluid-scaled processes in Section~\ref{Shifted fluid-scaled processes} and we show that they are stochastically bounded in Section~\ref{Bounding the shifted processes}.
\item In Section~\ref{Uniform fluid approximation}, we establish some technical auxiliary estimates and tightness of these families. Moreover, we establish that limit points of these fluid scaled processes, which are called fluid limits, are in fact fluid model solutions as defined in Section~\ref{sec: Fluid analysis}. The development in this section is very similar to those in Bramson \cite{bramson1998state} and is therefore  kept concise.
\item In Section~\ref{The scaled shifted total workload process}, we establish a similar tightness property for a family of shifted fluid-scaled workload processes.
\item The proof is then completed by showing a state-space collapse result in Section~\ref{State-space collapse}.
\end{enumerate}

\subsection{Convergence of the total workload}\label{Convergence of total workload}

\begin{lem}\label{l1}
Under the critical loading assumption, the diffusion-scaled total workload, $\wh{W}^n_{Tot}(t)=\frac{1}{n}\wh{W}^n_{Tot}(n^2t)$, converges in distribution to a RBM$(-\theta,\sigma^2)$.
\end{lem}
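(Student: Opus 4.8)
The plan is to exhibit $\wh{W}^n_{Tot}$ as the one-dimensional reflection of an explicit net-input process and to identify the limit of that net input by a functional central limit theorem. Put $X^n(t):=W^n_{Tot}(t)-Y^n_{L_2}(t)$. By \eqref{d}, $Y^n_{L_2}$ is nondecreasing with $Y^n_{L_2}(0)=0$, and since $\sum_iR_i(q)=1$ for $q\neq0$ it increases only when $Q^n(t)=0$; in that case the inner sum of \eqref{eq:total_workload} is empty, so $W^n_{Tot}(t)=0$, while positivity of the service times gives $W^n_{Tot}(t)>0$ whenever some $Q^n_i(t)>0$. Thus $(W^n_{Tot},Y^n_{L_2})$ solves the one-dimensional Skorokhod problem for $X^n$ (note $X^n(0)=W^n_{Tot}(0)\geq0$), i.e.\ $W^n_{Tot}=\Phi(X^n)$ with $\Phi$ the Lipschitz one-sided reflection map on $\mathcal D[0,T]$. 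As reflection commutes with the diffusion scaling, $\wh{W}^n_{Tot}=\Phi(\wh{X}^n)$, so by the continuous-mapping theorem it is enough to show $\wh{X}^n\overset{d}\rightarrow X$, a Brownian motion with drift $-\theta$, variance $\sigma^2$, and $X(0)=\tau^T\ba{Q}(0)$; then $\wh{W}^n_{Tot}\overset{d}\rightarrow\Phi(X)$, a RBM$(-\theta,\sigma^2)$.

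To identify $X^n$ I would feed \eqref{b}, \eqref{c}, \eqref{d}, the head-of-the-line bound \eqref{eq:HL} and the identity $Q^n_i(0)+A^n_i(t)-S^n_i(T^n_i(t))=Q^n_i(t)$ from \eqref{a} into \eqref{eq:total_workload}, which yields, with $B^n_i(m):=\sum_{j=1}^ms_i(j)$,
\begin{equation*}
X^n(t)=\sum_{i=1}^{\I}B^n_i\big(Q^n_i(0)+A^n_i(t)\big)-\sum_{i=1}^{\I}\sum_{j=1}^{S^n_i(T^n_i(t))}s'_i(j)-t .
\end{equation*}
Substituting \eqref{TotalArPr} for $A^n_i$, centring every renewal, partial-sum and routing process at its mean (using $\E s_i(j)=\tau_i$, $\E s'_i(j)=\tau_i-\beta_i$ and $\E(\tau^T\f^{(1)}_l(j))=\tau_l-\beta_l$ from \eqref{eq:Total time}), and noting that the deterministic $(\tau_l-\beta_l)S^n_l(T^n_l(t))$ contributions cancel, one rewrites this as
\begin{equation*}
X^n(t)=\tau^TQ^n(0)+\sum_{i=1}^{\I}\tau_iE^n_i(t)-t+M^n(t),
\end{equation*}
where (using $s_i(j)=v^{(1)}_{ii}(j)+s'_i(j)$ to collapse two of the centred walks) $M^n(t)=\sum_{l=1}^{\I}Z^n_l\big(S^n_l(T^n_l(t))\big)$, the $Z^n_l$ being independent centred random walks of per-step variance $\sigma^2_{v_l}+\sigma^2_{\Phi,l}$ with $\sigma^2_{v_l}:=\mathrm{Var}(v^{(1)}_{ll}(j))$, $\sigma^2_{\Phi,l}:=\mathrm{Var}(\tau^T\f^{(1)}_l(j))$, modulo error terms that are sums over index windows of length $O(Q^n_l)=O(n)$ and residual service times; by Bramson's weak-law estimates and the stated Lindeberg conditions these are $o_P(n)$ uniformly on compacts and vanish after diffusion scaling.

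The core step is the functional central limit theorem for $\wh{X}^n$. The renewal part is straightforward: $\frac1n\big(\sum_i\tau_iE^n_i(n^2t)-n^2t\big)\overset{d}\rightarrow\sum_i\tau_i\sqrt{\lambda_i}\,c_{u_i}B^E_i(t)-\theta t$, the drift $-\theta$ arising from $\sum_i\tau_i\lambda^n_i=\rho^n=1-\theta/n$ and the variance rate being $\sum_i\tau_i^2\lambda_ic_{u_i}^2$. For $M^n$, a random time change gives $\frac1nM^n(n^2\cdot)\overset{d}\rightarrow\sum_l\sqrt{(\sigma^2_{v_l}+\sigma^2_{\Phi,l})\mu_l}\,B^Z_l(\psi_l(\cdot))$ with $\psi_l(t)=\lim_n\frac1{n^2}T^n_l(n^2t)$; here I would invoke the fluid analysis of Section~\ref{sec: Fluid analysis}, in particular Theorem~\ref{Thm: Convergence of the fluid model} together with a uniform fluid approximation, to conclude that on the diffusion time scale $\wh{Q}^n(u)$ is asymptotically on the invariant manifold for every $u>0$, so that $T^n_l(n^2\cdot)/n^2$ concentrates on $\rho_l(\cdot)$ and $\mu_l\psi_l'=\gamma_l$. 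Then $\frac1nM^n(n^2\cdot)$ converges to a Brownian motion of variance rate $\sum_l(\sigma^2_{v_l}+\sigma^2_{\Phi,l})\gamma_l$, which by the effective-variance identity $\sum_l(\sigma^2_{v_l}+\sigma^2_{\Phi,l})\gamma_l=\sum_i\lambda_i\sigma^2_{s_i}$ — a consequence of \eqref{eq:Total time}, \eqref{eq:secont moment of Total time} and the traffic equations, and precisely the sense in which the aggregated workload behaves like that of a single-server queue fed by two independent renewal streams with job sizes $s_i(j)$ — equals $\sum_i\lambda_i\sigma^2_{s_i}$. Independence of the two driving families then gives $\wh{X}^n\overset{d}\rightarrow$ BM$(-\theta,\sigma^2)$ with $\sigma^2=\sum_i(\lambda_i\sigma^2_{s_i}+\tau_i^2\lambda_ic_{u_i}^2)$, and applying $\Phi$ completes the proof.

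I expect the main obstacle to be making the composition/time-change argument for $M^n$ rigorous — tightness of the composed processes $Z^n_l\circ S^n_l\circ T^n_l$, asymptotic negligibility of the window and residual-service terms, and concentration of the clocks $T^n_l$ near $\rho_l(\cdot)$, all of which rest on Bramson's weak-law estimates and the uniformity of the fluid convergence of Section~\ref{sec: Fluid analysis} — together with the algebraic bookkeeping behind the effective-variance identity.
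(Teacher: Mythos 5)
Your opening move (exhibiting $W^n_{Tot}$ as the one-sided reflection of a net input and reducing to an FCLT for that input) and your final variance $\sigma^2=\sum_i\lambda_i\sigma_{s_i}^2+\tau_i^2\lambda_ic_{u_i}^2$ agree with the paper. But the route you take through the middle has a structural problem. To handle $M^n(t)=\sum_l Z^n_l\big(S^n_l(T^n_l(t))\big)$ you need the \emph{individual} clock limits $T^n_l(n^2\cdot)/n^2\to\rho_l\cdot$, and you propose to get them from Theorem~\ref{Thm: Convergence of the fluid model} plus ``a uniform fluid approximation'' of the stochastic system on the diffusion time scale. In the paper that machinery sits \emph{downstream} of Lemma~\ref{l1}: the stochastic boundedness of the shifted workload \eqref{eq:Bound for workload} is deduced from Lemma~\ref{l1}, it feeds Lemma~\ref{l2}, and only then does one get the uniform fluid approximation and state-space collapse that would pin the clocks to $\rho_l t$. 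Without SSC you only control the sum $T^n_1+T^n_2=t-Y^n_{L_2}$, not the split, and since the per-step variances of your $Z^n_l$ differ across $l$ the split matters. As written the argument is circular. The paper's proof is designed precisely to avoid this: it rewrites the total workload as $W_G(t)=\sum_i\sum_{j\le Q_i(0)+E_i(t)}s_i(j)-\int_0^t\ind{W_G(s)>0}\,ds$, so the input is indexed by the \emph{external} renewal processes $E_i$ (whose rates are known a priori) and only the sum of the clocks appears; the FCLT is then the standard compound-renewal heavy-traffic limit with no clock identification needed.

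The second gap is that the identity making this reindexing legitimate, $W_{Tot}(t)=W_G(t)$, is nontrivial and is the bulk of the paper's proof: it is a customer-relabeling argument (the sets $\mathcal A^i_1,\mathcal A^i_2,\mathcal A^i_3$ and the matching $s'_i(j)=v^{(1)}_{ll}(h^i_j+1)+s'_l(h^i_j+1)$, iterated over routing generations via a partition of $[0,t]$). Your decomposition silently needs the same bookkeeping: the variables $v^{(k+1)}_{il}(j)$ inside $s'_i(j)$ and the immediate service times $v^{(1)}_{ll}(\cdot)$ of the corresponding routed arrivals at node $l$ are the \emph{same} random variables, so your ``independent centred random walks $Z^n_l$ with per-step variance $\sigma^2_{v_l}+\sigma^2_{\Phi,l}$'' are not independent of the partial sums you centred earlier, and the claimed cancellation of the dependent pieces (which you relegate to ``error terms ... $o_P(n)$'') is exactly the content of the relabeling identity. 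Together with the effective-variance identity $\sum_l(\sigma^2_{v_l}+\sigma^2_{\Phi,l})\gamma_l=\sum_i\lambda_i\sigma^2_{s_i}$, which you assert but do not verify, this is where the real work lies. To repair the proof, either prove $W_{Tot}=W_G$ first (the paper's route, which also removes the need for the clock limits), or restructure the whole section so that the clock limits are established independently of Lemma~\ref{l1}.
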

\begin{proof}
By \eqref{eq:total S requirement} for $i,l=1,\I$ and $i\neq l,$ we have that the total service requirement of the $j^\text{th}$ external customer (including customers who already are in queue $i$ at time zero) who enter at queue $i$ is given by
\begin{equation*}
s_i(j)=v_{ii}^{(1)}(j)+
\sum_{k=1}^{\infty}
\f^{(k)}_{ii}(j)v_{ii}^{(k+1)}(j)+
\f^{(k)}_{il}(j)v_{il}^{(k+1)}(j).
\end{equation*}
We define the following process
\begin{equation}\label{eq:second definition of Total W}
W_{G}(t):=\sum_{i=1}^{\I}\sum_{j=1}^{Q_i(0)+E_i(t)}
s_i(j)-
\int_{0}^{t}\ind{W_{G}(s)> 0}ds.
\end{equation}
We recall that $E_i(\cdot)$ denotes the external arrivals at queue $i$ and by construction of the model, $\{s_i(j)\}_{j=1}^{\infty}$ is a sequence of positive i.i.d.\ random variables for $i=1,\I$. The process given in \eqref{eq:second definition of Total W} represents the workload of a single queue with input given by two independent renewal process that have independent service requirements from each other. The busy time of this system is $\int_{0}^{t}\ind{W_{G}(s)> 0}ds=T_1(t)+T_2(t)$,
and it represents the busy time of server in layer 2. Note that the busy time is zero if and only if both queues are empty.
The diffusion-scaled process (after subtracting and adding the means of the random quantities in \eqref{eq:second definition of Total W}), is given by
\begin{equation*}
\begin{split}
\wh{W}^n_{G}(t)&=\sum_{i=1}^{\I}\frac{1}{n}\Big(\sum_{j=1}^
{n^2(\ba{Q}^n_i(0)+\ba{E}^n_i(t))}
s^n_i(j)-\tau_in^2(\ba{Q}^n_i(0)+\ba{E}^n_i(t))\Big)\\
&{\hskip 5pt}+\tau_i(\wh{E}^n_i(t)-\lambda^n_int)
+\tau_i\ba{Q}^n_i(0)+\tau_i\lambda^n_int-
nt+\wh{Y}^n(t),
\end{split}
\end{equation*}
where $\wh{Y}^n(t)=\int_{0}^{t}\ind{\wh{W}^n_{G}(s)= 0}ds$.
By the time change theorem in \citep{whitt1980some} and the functional central limit theorem (also see \cite[Theorem~6.8]{chen2001fundamentals}), we have that
$\wh{W}^n_{G}(\cdot)\overset{d}\rightarrow \wh{W}_{G}(\cdot),$ as $n\rightarrow \infty$. Furthermore, the limit can be described as
\begin{equation*}
\begin{split}
\wh{W}_{G}(t)
=\sum_{i=1}^{\I}\tau_i \ba{Q}_i(0)-\theta t
+ \sqrt{\lambda_1\sigma_{s_1}^2} \mathcal{W}_1(t)
+\sqrt{\lambda_2\sigma_{s_2}^2} \mathcal{W}_2(t)
+\tau_1\sqrt{\lambda_1c_{u_1}^2}  \mathcal{W}_3(t)
+\tau_2\sqrt{\lambda_2c_{u_2}^2} \mathcal{W}_4(t)+\wh{Y}(t),
\end{split}
\end{equation*}
where $\mathcal{W}_i(t),$ $1 \leq i \leq 4$ are independent 1-dimensional standard Brownian motions
and $\wh{Y}(t)$ can be increased only if $\wh{W}_{G}(t)=0.$
Thus, the process $\wh{W}_{G}(t)$ satisfies a 1-dimensional Skorokhod problem. That is, $\wh{W}_{G}(t)$ is a reflected Brownian motion starting at point $\sum_{i=1}^{\I}\tau_i \ba{Q}_i(0)$ with drift $-\theta$ and variance $\sigma^2$ which is given by
$
\sigma^2=\sum_{i=1}^{\I}\lambda_i\sigma_{s_i}^2 + \tau_i^2\lambda_i c_{u_i}^2,
$
where $\sigma_{s_i}^2$ denotes the variance of $s_i(j)$ and $c_{u_i}$ denotes the coefficient of variation of $u_i(j)$. The second moment of the random variables $s_i(j)$ is given by \eqref{eq:secont moment of Total time}. In case of Poisson external arrivals, this result is reduced to the well-known heavy-traffic limit (see e.g.\ \cite[Theorem~2.3]{gromoll2004diffusion}).

Now, we shall prove that
\begin{equation}\label{eq:equality of workloads}
W_{Tot}(t)=W_{G}(t),{\hskip 10pt}t\geq0.
\end{equation}
We do it by showing that we can change the label of how we count the service requirements of the customers in the system. Counting the total service requirements of the external arrivals until time $t$ is the same as counting the immediate and remaining service requirements of the total arrivals in the system until time $t$.
Recall that $s_i(j)=v_{ii}^{(1)}(j)+s'_i(j)$ for $i=1,\I$, where $s'_i(j)$ are the future service requirements.
If $t=0,$ then we have nothing to prove as $E_i(0)=A_i(0)=0$. If $t>0$ and $\Phi_{li}\Big(S_l\big(T_l(t)\big)\Big)=0$ for $l,i\in \{1,2\},$ then \eqref{eq:equality of workloads} holds as all the departures until time $t$ leave the system and so $s'_i(j)=0$ for $j=1,\ldots,S_i\big(T_i(t)\big)$ and $E_i(t)=A_i(t)$.

For the general case, we first assume that all customers are routed only one time until the time $t.$ The right-hand side in \eqref{eq:second definition of Total W} can be written as
\begin{equation}\label{alternative writing}
\begin{split}
\sum_{i=1}^{\I}\sum_{j=1}^{Q_i(0)+E_i(t)}
v_{ii}^{(1)}(j)+
\sum_{j=S_i\big(\mu_iT_i(t)\big)+1}^{Q_i(0)+E_i(t)}
s'_i(j)+
\sum_{j=1}
^{S_i\big(T_i(t)\big)}
s'_i(j).
\end{split}
\end{equation}
In order to separate the customers who depart form node $i$ (if they are routed or leave the system), we define the following sets for $i,l\in \{1,\I\}$ and $i\neq l,$
$\mathcal{A}^i=\{j:1\leq j \leq S_i\big(T_i(t)\big)\},$
which includes all the customers who depart from node $i.$ Customers can be routed at the same node and we denote this set by
$\mathcal{A}^i_1=\{j\in \mathcal{A}^i:\varphi_{ii}(j)=1\}$
or to the other node and we denote this set by
$\mathcal{A}^i_2=\{j\in \mathcal{A}^i:\varphi_{il}(j)=1\}.$
Last, we define the following set
$\mathcal{A}^i_3=\mathcal{A}^i \setminus(\mathcal{A}^i_1\cup \mathcal{A}^i_2),$
which represents all  customers who leave the system.
If $j\in \mathcal{A}^i_1,$ then there exist natural numbers $k_j^i$ such that
$S_i\big(T_i(t)\big)\leq k_j^i \leq A_i(t)-1$ and
\begin{equation*}
s'_i(j)=v_{ii}^{(1)}(k_j^i+1)+s'_i(k_j^i+1).
\end{equation*}
Similarly, if $j\in \mathcal{A}^i_2$, then for $i\neq l$ there exist $h_j^i,$ such that $S_l\big(T_l(t)\big)\leq h_j^i \leq A_l(t)-1$ and
\begin{equation*}
s'_i(j)=v_{ll}^{(1)}(h_j^i+1)+s'_l(h_j^i+1).
\end{equation*}
Last, $j\in \mathcal{A}^i_3$ means that the $j^\text{th}$ customer leaves the system  after his first service and so $s'_i(j)=0$.
The quantities $k_j^i$ and $h_j^i$ denote the number of customers in node $i=1,\I$ (including that one in service) who the
$j^{\textrm{th}}$ customer meets after his departure from node $l=1,\I$.
Therefore, \eqref{alternative writing} can be written as
\begin{equation*}\label{alternative 2}
\begin{split}
\sum_{i=1}^{\I}\sum_{j=1}^{Q_i(0)+A_i(t)}
v_{ii}^{(1)}(j)+
\sum_{j=S_i\big(T_i(t)\big)+1}^{Q_i(0)+A_i(t)}
s'_i(j).
\end{split}
\end{equation*}
Now, let $k_{0}^i(j)$ be the number of routes at node $i,$ for $1 \leq j \leq Q_i(0)+E_i(t)$ until time $t.$ Also, we define the number $k_0^i=\max_{j}k_{0}^i(j),$ and the number of maximum routes for any external arrival in the system until time $t,$
$k_0=\max_{i=1,2}\{k_0^i\}.$
Observe that for any $t\geq 0,$ $k_0\in \mathbb{N}$ and it is finite because we assume Markov routing.
Take the following partition
$[0,t]= \bigcup_{m=0}^{k_0} [t_m,t_{m+1}],$
where $t_0=0$ and $t_{{k_0+1}}=t$. We take the previous partition in such way, so that in each interval $[t_m,t_{m+1}],$ any customer in the system can be routed only one time and so that in the interval $[t_{k_0},t]$ there is no routing.
Now, we can write the right-hand side of \eqref{eq:second definition of Total W} as
\begin{equation*}
\begin{split}
\sum_{i=1}^{\I}\sum_{j=1}^{Q_i(0)+E_i(t)}
v_{ii}^{(1)}(j)+
\sum_{j=S_i\big(T_i(t_1)\big)+1}^{Q_i(0)+E_i(t)}
s'_i(j)+
\sum_{j=1}
^{S_i\big(T_i(t_{1})\big)}
s'_i(j).
\end{split}
\end{equation*}
Applying the previous idea where customers are routed only one time per interval, we have that the above quantity can be written as
\begin{equation*}
\begin{split}
\sum_{i=1}^{\I}\sum_{j=1}^{Q_i(0)+E_i(t)+\sum_{l=1}^{\I}\Phi_{li}\big(S_l(T_l(t_1))\big)}
v_{ii}^{(1)}(j)+
\sum_{j=S_i\big(T_i(t_1)\big)+1}^{Q_i(0)+E_i(t)+\sum_{l=1}^{\I}\Phi_{li}\big(S_l(T_l(t_1))\big)}
s'_i(j)
\end{split}
\end{equation*}
Split again the last term of the previous quantity until time $t_2,$ and apply the previous idea when customers are routed only one time to obtain
\begin{equation*}
\begin{split}
\sum_{i=1}^{\I}\sum_{j=1}^{Q_i(0)+E_i(t)+\sum_{l=1}^{\I}\Phi_{li}\big(S_l(T_l(t_2))\big)}
v_{ii}^{(1)}(j)+
\sum_{j=S_i\big(T_i(t_2)\big)+1}^{Q_i(0)+E_i(t)+\sum_{l=1}^{\I}\Phi_{li}\big(S_l(T_l(t_2))\big)}
s'_i(j).
\end{split}
\end{equation*}
Adapting the previous steps until time $t_{k_0},$ and recalling that the total arrival process is given by \eqref{TotalArPr}, we derive \eqref{eq:equality of workloads}.
\end{proof}

In the sequel, this result plays a key role. The next step is to define the so-called \textit{shifted fluid-scaled processes} and to show that they are stochastically bounded.

\subsection{Shifted fluid-scaled processes}\label{Shifted fluid-scaled processes}

We introduce the shifted fluid scaling, which is an extension of the classical fluid scaling. Let $T>0$ and $m\leq nT$. We define
\begin{equation*}
\bar{A}_i^{n,m}(t)=\frac{1}{n}\Big(A_i^n(nm+nt)-A_i^n(nm)\Big),
\end{equation*}
and the analogous scaling for the processes $T(\cdot)$, $Y_{L_2}(\cdot)$, and $E(\cdot)$.
For the departure process, the cumulative service time process, and the routing process we have that
\begin{equation*}
\bar{S}_i^{n,m}(t)=\frac{1}{n}\Big(S_i^n\big(nt +T_i^n(nm)\big)-S_i^n\big(T_i^n (nm)\big)\Big),
\end{equation*}
\begin{equation*}
\bar{V}^{n,m}_i(k)=\frac{1}{n}\Big(V_i^n\big(nk+S_i^n(T_i^n(nm))\big)-T_i^n(nm)\Big),
\end{equation*}
\begin{equation*}
\bar{\Phi}_{li}^{n,m}(k)=\frac{1}{n}
\Phi^n_{li}\Big(nk+S_l^n\big(T_l^n(nm)\big)\Big)-
\frac{1}{n}\Phi^n_{li}\Big(S_l^n\big(T_l^n(nm)\big)\Big).
\end{equation*}
Last, the queue length process is scaled as follows
$\bar{Q}_i^{n,m}(t)=\frac{1}{n}Q_i^n(nm+nt)$
and analogously for the scaling of the immediate and total workload.
The system dynamics \eqref{a}--\eqref{e} under the shifted fluid scaling become
\begin{align}
&\ba{Q}^{n,m}_i(t)=\ba{Q}^{n,m}_i(0)+
\bar{E}^{n,m}_i( t)
+\sum_{l=1}^{\I}\ba{\Phi}_{li}^{n,m}
\Big(\ba{S}^{n,m}_l
\Big(\ba{T}^{n,m}_l(t)\Big)
\Big)
-\ba{S}^{n,m}_i
\Big(\ba{T}^{n,m}_i(t)\Big),\label{RWa}\\
&\ba{T}^{n,m}_i(t)=
\int_{0}^{t} R^n_i(n\ba{Q}^{n,m}(s))ds
=\int_{0}^{t} R_i(\ba{Q}^{n,m}(s))ds,\label{RWc}\\
&\ba{W}^{n,m}_i(t)=\ba{V}^{n,m}_i
\bigg(
\ba{Q}^{n,m}_i(0)
+\ba{E}^{n,m}_i(t)+
\sum_{l=1}^{\I}
\ba{\Phi}_{li}^{n,m}
\Big(\ba{S}^{n,m}_l
\Big(\ba{T}^{n,m}_l(t)\Big)
\Big)
\bigg)
-\ba{T}^{n,m}_i(t),\label{RWb}\\
&\sum_{i=1}^{\I}\bar{T}_i^{n,m}(t)+\bar{Y}^{n,m}_{L_2}(t)=t, \label{RWd}\\
&\bar{Y}^{n,m}_{L_2}(t)\textrm{ increases }\Rightarrow \W_1^{n,m}(t)+\W_2^{n,m}(t)=0, \qquad t\geq0,\ i=1,\I. \label{RWe} \nonumber
\end{align}
In the sequel, we shall be referring to \textit{shifted fluid scaling}, and \textit{shifted fluid process} as \textit{shifted scaling}, and \textit{shifted process} for simplicity.
The main step of SSC is to show the shifted process can be approximated by
a solution of the fluid model. This is done in Section~\ref{Uniform fluid approximation}. We first need to prove that the shifted workload and shifted queue length are bounded at zero, which we do in the following section. Using these bounds and some properties of the cumulative service time \eqref{c}, we can apply the results in \cite[Sections 4 and 5]{bramson1998state}.


\subsection{Bounding the shifted processes}\label{Bounding the shifted processes}
First, we find the relation between the diffusion scaling and the shifted scaling. Although this relation is easily obtained and is  already known in the literature (e.g. \citep{zhang2011diffusion}), we provide it here for completeness.
Fix $L>1$ and define the shifted fluid processes on $[0,L]$. The interval $[0,n^2T]$ can be covered by $[nt]+1$ overlapping intervals as follows. For $t\in[0,n^2T],$ there exist $s\in[0,L]$ and $m\in\{0,1,\ldots,[nt]\}$ such that
\begin{equation*}
n^2t=nm+ns.
\end{equation*}
We can write the relation between the diffusion scaling and the shifted scaling as follows. For $s\leq L,$
\begin{align}
\bar{Q}^{n,m}(s)&=\wh{Q}^n\big(\frac{nm+ns}{n^2}\big),\label{T1a}\\
\bar{W}^{n,m}_{Tot}(s)&=\wh{W}^n_{Tot}\big(\frac{nm+ns}{n^2}\big)\label{T1b}.\\ \nonumber
\end{align}

By Lemma~\ref{l1} and \eqref{T1b}, it follows that for any $\epsilon>0$ there exists a constant $B_1>0$ such that
\begin{equation}\label{eq:Bound for workload}
\liminf_{n\rightarrow \infty}
\Prob(\max_{m\leq nT}\bar{W}^{n,m}_{Tot}(0)\leq B_1)\geq1-\epsilon.
\end{equation}
We denote the event $\{\omega\in\Omega: \max_{m\leq nT}\bar{W}^{n,m}_{Tot}(0)\leq B_1\}$ by $\mathcal{G}_1^n(B_1).$ Using now \eqref{eq:Bound for workload}, it can be shown that the shifted queue length process is stochastically bounded at zero.
\begin{lem}\label{l2}
Let $T>0.$ For any $\epsilon>0$ there exists a constant $B_2>0$ such that \begin{equation}\label{eq:Bound for queue}
\liminf_{n\rightarrow \infty}
\Prob(\max_{m\leq nT}
\bar{Q}^{n,m}_{i}(0)\leq B_2)
\geq1-\epsilon, {\hskip 5pt}i=1,\I.
\end{equation}
We denote the event $\{\omega\in\Omega: \max_{m\leq nT}
\bar{Q}^{n,m}_{i}(0)\leq B_2\}$ by $\mathcal{G}^n_2(B_2)$.
\end{lem}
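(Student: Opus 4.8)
The plan is to bound the shifted queue length at zero by the shifted total workload: the immediate workload at node $i$ is essentially the sum of the immediate service requirements of the customers presently queued there, so a bounded workload forces a bounded queue length. Evaluating the shifted dynamics \eqref{RWb} at $t=0$ — where $\bar E_i^{n,m}(0)$, $\bar T_i^{n,m}(0)$, $\bar S_l^{n,m}(0)$ and $\bar\Phi_{li}^{n,m}(0)$ all vanish — yields the identity $\bar W_i^{n,m}(0)=\bar V_i^{n,m}(\bar Q_i^{n,m}(0))$. Writing $c_m^n:=S_i^n(T_i^n(nm))$ and $L_m^n:=Q_i^n(nm)=n\bar Q_i^{n,m}(0)$, so that $c_m^n+L_m^n=Q_i^n(0)+A_i^n(nm)$ by \eqref{a}, the definition of $\bar V_i^{n,m}$ combined with the HL bound \eqref{eq:HL} (which gives $T_i^n(nm)<V_i^n(c_m^n+1)$) leads to
\begin{equation*}
\bar W_i^{n,m}(0)=\frac1n\Big(V_i^n\big(c_m^n+L_m^n\big)-T_i^n(nm)\Big)\ \ge\ \frac1n\sum_{j=c_m^n+2}^{\,c_m^n+L_m^n}v_{ii}^{(1),n}(j),
\end{equation*}
a sum of $L_m^n-1$ consecutive i.i.d.\ service requirements with mean $\beta_i=1/\mu_i$.

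The next step is to lower-bound these sums uniformly in $m\le nT$. Let $\tilde V_i^n(l):=\sum_{j=1}^l\big(v_{ii}^{(1),n}(j)-\beta_i\big)$ be the centred service partial-sum walk and $M_i^n:=\sup_{l\le Cn^2}|\tilde V_i^n(l)|$ its running maximum, where $C$ is chosen (using $T_l^n(t)\le t$, $A_i^n(t)\le E_i^n(t)+\sum_l S_l^n(t)$, the renewal law of large numbers, and $\tfrac1n Q_i^n(0)\to\bar Q_i(0)$) so that $\mathcal H^n:=\{Q_i^n(0)+A_i^n(n^2T)\le Cn^2,\ i=1,\I\}$ has probability tending to $1$. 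On $\mathcal H^n$ one has $c_m^n+L_m^n\le Cn^2$ for every $m\le nT$, so the centred increment of $\tilde V_i^n$ over the window in the display is at most $2M_i^n$ in modulus, and hence $\bar W_i^{n,m}(0)\ge \tfrac{\beta_i}{n}(L_m^n-1)-\tfrac{2}{n}M_i^n$. Since $W_{Tot}^n\ge W_i^n$ by \eqref{eq:total_workload}, also $\bar W_i^{n,m}(0)\le\bar W_{Tot}^{n,m}(0)$, and on the event $\mathcal G_1^n(B_1)$ of \eqref{eq:Bound for workload} the latter is $\le B_1$. Therefore, on $\mathcal G_1^n(B_1)\cap\mathcal H^n$, $\beta_i(L_m^n-1)\le nB_1+2M_i^n$, i.e.\ $\bar Q_i^{n,m}(0)\le \tfrac1n+\tfrac{B_1}{\beta_i}+\tfrac{2M_i^n}{n\beta_i}$ for all $m\le nT$. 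Finally, $M_i^n/n$ is stochastically bounded: Kolmogorov's (equivalently Doob's $L^2$) maximal inequality gives $\Prob(M_i^n>xn)\le C'/x^2$ for a constant $C'$ independent of $n$, since the per-step variance is bounded in $n$ by the Lindeberg-type assumption and the residual term contributes $o_{\mathbb P}(n)$. Given $\epsilon>0$, pick $B_1$, then $C$, then $x$ so that each of $\mathcal G_1^n(B_1)$, $\mathcal H^n$, $\{M_i^n\le xn,\ i=1,\I\}$ has $\liminf_n$-probability $\ge1-\epsilon/3$, and set $B_2:=1+(B_1+2x)/\min\{\beta_1,\beta_2\}$; on the intersection of the three events $\max_{m\le nT}\bar Q_i^{n,m}(0)\le B_2$, which is \eqref{eq:Bound for queue}.

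The only genuinely delicate point is the uniformity over $m$. A priori $\bar Q_i^{n,m}(0)$ may be of order $n$, so the block of service requirements contributing to $W_i^n(nm)$ has both a random length and a random starting index inside an interval of length $O(n^2)$; controlling all such blocks by Chebyshev and a union bound over the $O(n^2)$ starting positions would be far too lossy. The device that circumvents this is to dominate every centred block increment by the single quantity $2M_i^n$ — the global oscillation of the service partial-sum walk — which is $O_{\mathbb P}(n)$, the same order as the worst-case queue length, so that the inequality $\beta_i L_m^n\lesssim nB_1+M_i^n$ still pins $\bar Q_i^{n,m}(0)$ down to a constant. The HL bound \eqref{eq:HL} is precisely what allows $T_i^n(nm)$ to be replaced by a partial sum of service requirements with no error term, and the Lindeberg-type hypothesis enters only to keep the per-step variance bounded uniformly in $n$.
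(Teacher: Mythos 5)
Your proof is correct, and it reaches the same basic conclusion through the same underlying mechanism as the paper --- namely that $\bar W_i^{n,m}(0)\le \bar W_{Tot}^{n,m}(0)\le B_1$ on $\mathcal G_1^n(B_1)$, while $\bar W_i^{n,m}(0)$ dominates a block of roughly $n\bar Q_i^{n,m}(0)$ i.i.d.\ service requirements of mean $\beta_i$ --- but the route is genuinely different. The paper argues by contradiction: it assumes $\max_{m\le nT}Q_i^n(nm)>Bn$ with probability at least $\delta$, picks the maximizing index $m_n$, and applies the weak law of large numbers to the single block of $Bn$ service times starting at $S_i^n(T_i^n(nm_n))+2$, invoking independence of that block from the (random) starting index to conclude the block sum is close to $Bn\beta_i$, which contradicts the workload bound once $B>2B_1/\beta_i$. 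You instead give a direct, pathwise argument: you dominate the centred increment of \emph{every} block, for every $m\le nT$ simultaneously, by twice the running maximum $M_i^n$ of the centred service partial-sum walk over $[0,Cn^2]$, and then show $M_i^n=O_{\Prob}(n)$ by Doob/Kolmogorov (indeed $\E[(M_i^n)^2]\le 4Cn^2\sigma^2$ with $\sigma^2$ uniformly bounded by the Lindeberg-type hypothesis, so $\Prob(M_i^n>xn)\le 4C\sigma^2/x^2$). What your approach buys is that it entirely sidesteps the independence claim that the paper leans on --- a slightly delicate point, since $m_n$ is chosen by optimizing over the whole path and the paper's assertion that the post-departure service times are independent of $S_i^n(T_i^n(nm_n))$ deserves more justification than it receives --- at the modest cost of introducing the auxiliary event $\mathcal H^n$ and a second-moment maximal inequality where the paper only needs a law of large numbers. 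Both proofs use \eqref{eq:HL} in the same way, to replace $T_i^n(nm)$ by $V_i^n(S_i^n(T_i^n(nm))+1)$ with the correct sign. Your argument is complete and, if anything, more robust than the one in the paper.
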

\begin{proof}
We prove the result by deriving a contradiction, so suppose that \eqref{eq:Bound for queue} does not hold. Thus, assume there exists at least one $i$ such that  $Q_i^{n,m}(0)$ is not stochastically bounded. In other words, there exists a $\delta >0$ such that for any $B>0$,
\begin{equation}\label{100}
\liminf_{n\rightarrow \infty}
\Prob\big( \max_{m\leq nT}
Q_i^{n}(nm)>Bn \big)>\delta.
\end{equation}
Suppose that $m_n$ is such that it optimises the quantity
$\max_{m\leq nT}Q_i^{n}(nm)$. We can choose in \eqref{eq:Bound for workload}, $\epsilon=\frac{\delta}{3}$ and a (large enough) constant $B_1$. Also, we choose a constant $B$ such that $B>2\frac{B_1}{\beta_i}$.
By the definition of the total workload \eqref{eq:total_workload} and \eqref{eq:HL}, we have that
\begin{align*}
\max_{m\leq nT} W^{n}_{Tot}(nm)&\geq\sum_{j=1}^{S_i^{n}(T^n_i(nm_n))+1}v_{ii}^{(1)}(j)-T^n_i(nm_n)
+\sum_{j=S_i^{n}(T^n_i(nm_n))+2}^{S_i^{n}(T^n_i(nm_n))+Q_i^{n}(nm_n)}v_{ii}^{(1)}(j)\\
&\geq \sum_{j=S_i^{n}(T^n_i(nm_n))+2}^{S_i^{n}(T^n_i(nm_n))+Q_i^{n}(nm_n)}v_{ii}^{(1)}(j).
\end{align*}
We know that $v_{ii}^{(1)}(j)$ are i.i.d.\ with mean $\beta_i$. Also, in the previous summation $j>S_i^{n}(T^n_i(nm_n))$, which means that $v_{ii}^{(1)}(j)$ are independent of the process $S_i^{n}(T^n_i(nm_n))$. 
Define the the following event
\begin{equation}\label{extraset}
\mathcal{G}^n=\{\omega\in\Omega:\big|\frac{1}{Bn}\sum_{j=S_i^{n}(T^n_i(nm_n))+2}^{S_i^{n}(T^n_i(nm_n))+Bn}
v_{ii}^{(1)}(j)-\beta_i\big|<\frac{\beta_i}{2}\}.
\end{equation}
By the weak law of large numbers (which we can apply due to the independence of the $v_{ii}^{(1)}(j)$ and $S_i^{n}(T^n_i(nm_n))$), we have that for large $n$, 
$\Prob(\mathcal{G}^n)\geq 1-\frac{\delta}{3}$. In the sequel, we assume that 
$\omega \in \mathcal{G}^n\cap\mathcal{G}_1^n(B_1)\cap (\mathcal{G}_2^n(B))^c$, and note that
$\Prob{(\mathcal{G}^n\cap\mathcal{G}_1^n(B_1)\cap (\mathcal{G}_2^n(B))^c)}\geq \frac{\delta}{3}$.
Applying \eqref{100} and dividing by $n$ we derive
\begin{equation*}\label{ql}
B_1\geq \max_{m\leq nT}\bar{W}^{n,m}_{Tot}(0)\geq
\frac{B}{Bn}
\sum_{j=S_i^{n}(T^n_i(nm_n))+2}^{S_i^{n}(T^n_i(nm_n))+Bn}
v_{ii}^{(1)}(j).
\end{equation*}
By \eqref{extraset} and the last inequality, we obtain for sufficiently large $n$,
$B_1\geq B(\beta_i-\frac{\beta_i}{2})>2\frac{B_1}{2}$. This yields  a contradiction.
\end{proof}	
Having proved that the shifted processes are bounded, we can show that the shifted processes can be approximated by a solution of the fluid model. This is the topic of next section, in which we use a very similar approach as is Bramson \citep[Sections 4 and 5]{bramson1998state}.


\subsection{Uniform fluid approximation}\label{Uniform fluid approximation}
By \cite[Proposition~5.1]{bramson1998state}, we have that for any $\epsilon>0$,
\begin{equation}\label{L1}
\Prob(\max_{m< nT} \|\bar{E}^{n,m}(\cdot)-\lnn \cdot\|_L>\epsilon)\leq\epsilon.
\end{equation}
Also, by \cite[Proposition~5.2]{bramson1998state} it is known that the shifted arrival process is almost Lipschitz continuous, which means that for some $N_1>0,$
\begin{equation*}
\Prob (\sup_{t_1,t_2\in[0,L]}
{|\bar{ {E}}^{n,m}(t_2)-\bar{ {E}}^{n,m}(t_1)|
>N_1|t_2-t_1|}+\epsilon\
for\ some\ m<nT)\le\epsilon.
\end{equation*}
Furthermore, using the definition of the cumulative service time \eqref{c}, the property $\sum_{i=1}^{\I}  R_i^n(q)= 1$, and the observation that $Y^n_{L_2}(\cdot)$ and $T^n(\cdot)$ are increasing functions in time, we conclude that the shifted process, $T^{n,m}(\cdot),$ and the shifted idle time are Lipschitz continuous with constant equal to 1.

\begin{prop}\label{p2}
Let $\epsilon>0$. Then, for an appropriate large $n$, and for $i=1,\I$,
\begin{align}
& \Prob(\max_{m< nT} \|\ba{S}_i^{n,m}(\ba{T}_i^{n,m}(\cdot))
-\mu_i \ba{T}_i^{n,m}(\cdot)\|_L>\epsilon)<\epsilon, \label{2a}\\
&\Prob(\max_{m< nT} \|\sum_{l=1}^{\I}\bar{\Phi}_{li}^{n,m}\Big(\ba{S}^{n,m}_l\big(\ba{T}^{n,m}_l(\cdot)\big)\Big)- \sum_{l=1}^{\I}\mu_lp_{li}\bar{T}_l^{n,m}(\cdot)\|_L>\epsilon)<\epsilon, \label{2c}
	\end{align}	
\begin{equation}\label{2b}
\begin{split}
	&\Prob (\max_{m< nT} \|\bar{V}^{n,m}_i\bigg(\ba{Q}^{n,m}_i(0)+\ba{E}^{n,m}_i(\cdot)+
	\sum_{l=1}^{\I}\ba{\Phi}_{li}^{n,m}\Big(\ba{S}^{n,m}_l\big(\ba{T}^{n,m}_l(\cdot)\big)\Big)\bigg)\\
	&\qquad\qquad \qquad-
\beta_i(\ba{Q}_i^{n,m}(0)+\lin \cdot+
\sum_{l=1}^{\I}p_{li}\bar{T}_l^{n,m}(\cdot))\|_L
>\epsilon)<\epsilon,\\
\end{split}
\end{equation}
\end{prop}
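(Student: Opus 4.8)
The plan is to derive all three estimates from Bramson's weak-law machinery (\cite[Sections~4--5]{bramson1998state}) together with elementary composition bounds, using throughout that each $\ba T^{n,m}_i(\cdot)$ is Lipschitz continuous with constant $1$, so that $\ba T^{n,m}_i(t)\le t\le L$ on $[0,L]$. First I would record the building blocks. Since $\{v^{(1),n}_{ii}(j)\}_j$ is i.i.d.\ with mean $\beta_i$, the shifted renewal process $\ba S^{n,m}_i$ and the shifted partial-sum process $\ba V^{n,m}_i$ satisfy, for every $\eta>0$ and all large $n$,
\[
\Prob\big(\max_{m<nT}\|\ba S^{n,m}_i(\cdot)-\mu_i(\cdot)\|_L>\eta\big)<\eta
\qquad\text{and}\qquad
\Prob\big(\max_{m<nT}\|\ba V^{n,m}_i(\cdot)-\beta_i(\cdot)\|_L>\eta\big)<\eta,
\]
and, since $\{\f^{(1)}_{li}(j)\}_j$ is i.i.d.\ with mean $p_{li}$, the shifted routing process satisfies $\Prob(\max_{m<nT}\|\ba\Phi^{n,m}_{li}(\cdot)-p_{li}(\cdot)\|_L>\eta)<\eta$. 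These are the shifted weak laws of \cite[Sections~4--5]{bramson1998state} (cf.\ also the analogous estimate \eqref{L1}): over each window the relevant counts and partial sums are, after a single residual term, governed by i.i.d.\ input, and the maximum over $m$ is controlled by passing to a supremum over the renewal index, exactly as in Bramson. In the same vein, $\ba S^{n,m}_i$, $\ba V^{n,m}_i$ and $\ba\Phi^{n,m}_{li}$ are almost Lipschitz continuous, uniformly over $m<nT$, in the sense of \cite[Proposition~5.2]{bramson1998state}.

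Given these, \eqref{2a} is immediate: on the event $\{\|\ba S^{n,m}_i(\cdot)-\mu_i(\cdot)\|_L\le\eta\}$, since $\ba T^{n,m}_i(t)\in[0,L]$, one has $|\ba S^{n,m}_i(\ba T^{n,m}_i(t))-\mu_i\ba T^{n,m}_i(t)|\le\eta$ for every $t\in[0,L]$; a union bound over $m<nT$ and the first displayed estimate give \eqref{2a}. For \eqref{2c}, on the good event of \eqref{2a} the quantity $r_l(t):=\ba S^{n,m}_l(\ba T^{n,m}_l(t))$ lies in a fixed compact set $I_l$ (as $\ba T^{n,m}_l(t)\le L$), and
\[
\big|\ba\Phi^{n,m}_{li}(r_l(t))-p_{li}\mu_l\ba T^{n,m}_l(t)\big|
\le
\sup_{r\in I_l}\big|\ba\Phi^{n,m}_{li}(r)-p_{li}r\big|
+p_{li}\,\big|r_l(t)-\mu_l\ba T^{n,m}_l(t)\big| ;
\]
the first term is $\le\eta$ with probability at least $1-\eta$ by the weak law for $\ba\Phi^{n,m}_{li}$ (which holds on any fixed horizon, in particular on $I_l$), the second is $\le p_{li}\eta$ on the good event of \eqref{2a}, and summing over $l$ and unioning over $m<nT$ yields \eqref{2c}. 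Finally, \eqref{2b} follows by the same scheme: the argument of $\ba V^{n,m}_i$ equals $\ba Q^{n,m}_i(0)+\ba E^{n,m}_i(\cdot)+\sum_l\ba\Phi^{n,m}_{li}(\ba S^{n,m}_l(\ba T^{n,m}_l(\cdot)))$, which, by Lemma~\ref{l2}, \eqref{L1} and \eqref{2c}, is on the good event within $\eta$ of the quantity $b^{n,m}_i(\cdot)$ appearing inside the parentheses in \eqref{2b} and, like it, lies in a fixed compact set; applying the weak law for $\ba V^{n,m}_i$ on that compact set to control $|\ba V^{n,m}_i(b^{n,m}_i(\cdot))-\beta_i b^{n,m}_i(\cdot)|$, the almost-Lipschitz continuity of $\ba V^{n,m}_i$ to absorb the $\eta$-discrepancy between the true argument and $b^{n,m}_i(\cdot)$, and a union bound over $m<nT$, gives \eqref{2b}.

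The main obstacle is not the composition bookkeeping above but the uniformity over the $[nT]+1$ shifts $m$ in the underlying weak laws: one needs the error probability on each $O(n)$-length window of i.i.d.\ input to decay fast enough in $n$ that a union bound over $O(nT)$ windows still closes, and one needs the time-changes $T^n_l(nm)$ and $\ba T^{n,m}_l(\cdot)$ to be, respectively, admissible shift points and $1$-Lipschitz, so that fresh input is available after the shift. This is exactly what \cite[Sections~4--5]{bramson1998state} supplies; since the input structure of our model is the same, I would state the shifted weak laws above with a reference to those sections and spell out in detail only the (routine) composition estimates.
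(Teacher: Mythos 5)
Your proposal is correct and follows essentially the same route as the paper: both rest on Bramson's shifted weak-law estimates (uniform over the $O(nT)$ shift points, which for $\ba S^{n,m}_i$ and $\ba\Phi^{n,m}_{li}$ must be taken uniform over arbitrary starting points $u\le n^2T$ rather than just multiples of $n$, since the shifts are $T^n_i(nm)$ and $S^n_l(T^n_l(nm))$), combined with the $1$-Lipschitz time change $\ba T^{n,m}_i$ staying in $[0,L]$ and the boundedness of the composed arguments (via Lemma~\ref{l2}, \eqref{L1}, and the Lipschitz bound on the departure process) to justify the compositions and union bounds. The only cosmetic difference is that the paper phrases \eqref{2a} explicitly through the uniform-in-$u$ estimate and invokes \cite[Propositions~4.2 and~5.19]{bramson1998state} by name for \eqref{2c} and \eqref{2b}, whereas you package the same facts as shifted weak laws for $\ba S^{n,m}_i$, $\ba V^{n,m}_i$, $\ba\Phi^{n,m}_{li}$; the substance is identical.
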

\begin{proof}
It is shown in \cite{bramson1998state} that for a renewal process
$S(\cdot)$,
\begin{equation*}
\Prob\big(\sup_{m\leq nT}
\sup_{t \leq L}|\frac1n(S_i^n((nm+nt))
-S_i^n(nm))-\mu t|_L\geq\epsilon)
<\epsilon,
\end{equation*}
which is equivalent to (the process can start anywhere in the interval $[0,n^2T]$)
\begin{equation}\label{bram}
\Prob\big(\sup_{u\in[0,n^2T]}
\sup_{t \leq L}
|\frac1n(S_i^n((u+nt))-S_i^n( u))
-\mu_i t|\geq\epsilon)
<\epsilon.
\end{equation}
Let $t'=\ba{T}_i^{n,m}(t)=\frac{1}{n}(T_i^n(nm+nt)-T_i^n(nm))\in[0,L]$ and $u=T_i^n(nm)\leq n^2T$, for $m\leq nT$. By \eqref{bram} we obtain
\begin{equation*}\label{our}
\Prob\big(\sup_{u\in[0,n^2T]}
\sup_{t' \leq L}
|\frac1n(S_i^n((u+nt'))
-S_i^n(u))-\mu_it'|\geq\epsilon\big)
<\epsilon,
\end{equation*}
for each $i=1,\I$. Then, \eqref{2a} follows.

By the Lipschitz continuity of the departure process in Proposition~\ref{p3}, which we can prove only using \eqref{2a}, we know that $\|S_i(T_i(\cdot))\|_L\leq N_2Ln$. Using \cite[Proposition~4.2]{bramson1998state}, we derive
\begin{equation*}
\Prob(\max_{m< nT}
 \|\sum_{l=1}^{\I}
 \bar{\Phi}_{li}^{n,0}
 \Big(\ba{S}^{n,0}_l\big(\ba{T}^{n,0}_l(\cdot)\big)\Big)
 - \sum_{l=1}^{\I}
 p_{li}\bar{S}_l^{n,0}(\ba{T}^{n,0}(\cdot))\|_L
 >N_2L\epsilon)
 <\frac\epsilon n.
\end{equation*}
Furthermore, using the conclusion of the proof of \cite[Proposition~5.19]{bramson1998state}, we obtain
\begin{equation*}
\Prob(\max_{m< nT} \|\sum_{l=1}^{\I}\bar{\Phi}_{li}^{n,m}\Big(\ba{S}^{n,m}_l\big(\ba{T}^{n,m}_l(\cdot)\big)\Big)- \sum_{l=1}^{\I}p_{li}\bar{S}_l^{n,m}(\ba{T}^{n,m}(\cdot))\|_L>N_2L\epsilon)<\frac\epsilon n.
\end{equation*}
Applying \eqref{2a} to the last inequality we obtain \eqref{2c}.
	
To prove \eqref{2b}, we know by Lemma~\ref{l2}, that for some $B_2>0$,
$
|Q^n(0)|\leq B_2n.
$
 Using \eqref{L1}, \eqref{2b}, and applying \cite[Proposition~4.2]{bramson1998state}, the result follows.
\end{proof}

In the following proposition, we show that all the shifted processes are almost Lipschitz continuous.
\begin{prop}\label{p3}
Let $\bar{ {X}}^{n,m}(\cdot)$ be any of the processes
$\bar{S}^{n,m}(\cdot),$ $\bar{Q}^{n,m}(\cdot)$ and $\bar{W}^{n,m}(\cdot).$ Then for large $n,$ for $\eps>0$ and some $N>0,$ we have that
\begin{equation}\label{1}
\Prob (\sup_{t_1,t_2\in[0,L]}
 {|\bar{ {X}}^{n,m}(t_2)
 -\bar{ {X}}^{n,m}(t_1)|>N|t_2-t_1|}+\epsilon\
\textrm{for some}\ m<nT)\le\epsilon.
\end{equation}
\end{prop}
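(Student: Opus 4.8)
The plan is to prove the three cases in the order $\bar X^{n,m}=\bar S^{n,m}$, then $\bar X^{n,m}=\bar Q^{n,m}$, and finally $\bar X^{n,m}=\bar W^{n,m}$, exploiting the fact that through the shifted dynamics \eqref{RWa} and \eqref{RWb} the queue length and immediate workload are built out of the external arrival, departure, routing and cumulative service processes, each of which is either genuinely Lipschitz (with constant $1$, as already recorded just before Proposition~\ref{p2} for $\bar T^{n,m}$ and $\bar Y^{n,m}_{L_2}$) or almost Lipschitz. The underlying mechanism I would isolate first is an elementary observation: if a family of processes $\bar f^{n,m}$ satisfies $\Prob(\max_{m<nT}\|\bar f^{n,m}-g^{n,m}\|_L>\epsilon)\le\epsilon$ for functions $g^{n,m}$ that are Lipschitz with a common constant $C$ (uniformly in $n,m$), then on the complementary event $|\bar f^{n,m}(t_2)-\bar f^{n,m}(t_1)|\le C|t_2-t_1|+2\epsilon$ for all $t_1,t_2\in[0,L]$; moreover a finite sum or composition of such processes is again almost Lipschitz, the constants adding or multiplying and the additive slack growing only by the fixed number of terms, which is then reabsorbed by starting the argument with $\epsilon$ divided by that number.

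For the base case $\bar X^{n,m}=\bar S^{n,m}$ I would combine \eqref{2a} with the Lipschitz property (constant $1$) of $\bar T^{n,m}$: on the event where $\|\bar S_i^{n,m}(\bar T_i^{n,m}(\cdot))-\mu_i\bar T_i^{n,m}(\cdot)\|_L\le\epsilon$ for all $m<nT$, the process $\bar S_i^{n,m}(\bar T_i^{n,m}(\cdot))$ lies within $\epsilon$ of the function $\mu_i\bar T_i^{n,m}(\cdot)$, which is Lipschitz with constant $\mu_i$; hence, by the mechanism above, it is almost Lipschitz with constant $\mu_i$. (Equivalently, one may start directly from the renewal estimate \eqref{bram} with $u=T_i^n(nm)$, giving the same conclusion without invoking anything proved later.) This is exactly the fact used inside the proof of Proposition~\ref{p2} to derive \eqref{2c} and \eqref{2b}, so no circularity arises: only \eqref{2a} is needed here.

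Next, for $\bar X^{n,m}=\bar Q^{n,m}$ I would read off the four terms of \eqref{RWa}. The initial value $\bar Q_i^{n,m}(0)$ does not affect increments; the shifted external arrival process $\bar E^{n,m}$ is almost Lipschitz by \cite[Proposition~5.2]{bramson1998state} (quoted before Proposition~\ref{p2}); the routing term $\sum_{l=1}^{\I}\bar\Phi_{li}^{n,m}(\bar S_l^{n,m}(\bar T_l^{n,m}(\cdot)))$ lies uniformly within $\epsilon$ of $\sum_{l=1}^{\I}\mu_lp_{li}\bar T_l^{n,m}(\cdot)$ by \eqref{2c}, and the latter is Lipschitz with constant $\sum_{l=1}^{\I}\mu_lp_{li}$ since each $\bar T_l^{n,m}$ is Lipschitz-$1$; and the departure term $\bar S_i^{n,m}(\bar T_i^{n,m}(\cdot))$ was treated in the base case. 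Summing these four almost Lipschitz pieces and taking a union bound over the finitely many exceptional events (and over $i=1,\I$) shows $\bar Q^{n,m}$ is almost Lipschitz with constant of the form $N=N_1+\sum_{l}\mu_lp_{li}+\mu_i$. For $\bar X^{n,m}=\bar W^{n,m}$ I would use \eqref{RWb}: $\bar W_i^{n,m}(t)=\bar V_i^{n,m}(\,\cdot\,)-\bar T_i^{n,m}(t)$, where by \eqref{2b} the first term lies uniformly within $\epsilon$ of $\beta_i(\bar Q_i^{n,m}(0)+\lin t+\sum_{l}p_{li}\bar T_l^{n,m}(t))$, a function Lipschitz with a constant depending only on $\beta_i,\lambda_i,p_{li}$ (again via the Lipschitz-$1$ property of $\bar T^{n,m}$), while $\bar T_i^{n,m}$ is itself Lipschitz-$1$; adding the two pieces finishes the proof.

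The argument is essentially bookkeeping, so the ``hard part'' is not a single deep estimate but the care needed to keep it clean: one must apply \eqref{2c} and \eqref{2b} on the range where their inner time-changed argument actually lives, which is bounded by a deterministic constant (of order $N_2L$, exactly as in the proof of Proposition~\ref{p2}) thanks to the Lipschitz-$1$ bound on $\bar T^{n,m}$; one must keep every estimate uniform in $m<nT$; and one must respect the order $\bar S^{n,m}\to\bar Q^{n,m}\to\bar W^{n,m}$ so that the use of \eqref{2c}--\eqref{2b} is legitimate. All three points are already incorporated into the estimates of Proposition~\ref{p2}, so they transfer directly without extra work.
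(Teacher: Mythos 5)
Your proposal is correct and follows essentially the same route as the paper: a triangle-inequality ``almost Lipschitz'' mechanism applied to the weak-law estimates \eqref{2a}, \eqref{2c}, \eqref{2b} together with the Lipschitz-$1$ property of $\bar T^{n,m}$, treating $\bar S^{n,m}$ first and then reading $\bar Q^{n,m}$ and $\bar W^{n,m}$ off the shifted dynamics \eqref{RWa}--\eqref{RWb}. Your explicit remark that only \eqref{2a} is needed for the departure process (so no circularity with Proposition~\ref{p2}) matches the paper's own caveat.
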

\begin{proof}
For the departure process and by using \eqref{2a}, we have for  $i=1,\I$ that
\begin{align*}
|\bar{S_i}^{n,m}(T_i^{n,m}(t_2))
-\bar{S_i}^{n,m}(\ba{T}_i^{n,m}(t_1))|\leq |\bar{S_i}^{n,m}&(\ba{T}_i^{n,m}(t_2))
-\mu_i\ba{T}_i^{n,m}(t_2)|+|\bar{S_i}^{n,m}(\ba{T}_i^{n,m}(t_1))
-\mu_i\ba{T}_i^{n,m}(t_1)|\\
&+|\mu_i\ba{T}_i^{n,m}(t_2)
-\mu_i\ba{T}_i^{n,m}(t_1)|
\leq N_2|t_2-t_1|+2\epsilon,
\end{align*}
where $N_2=\max_i \mu_i$.
Using \eqref{L1}, \eqref{2b}, and the
Lipschitz continuity of the cumulative service time \eqref{c},
$T(\cdot),$ it is easy to show that the shifted total arrival process, $\ba{A}^{n,m}(\cdot),$ is almost Lipschitz continuous with $N_3=N_1+\|P\|N_2$, where $N_1=\max_i\lambda_i$.

Combining the almost Lipschitz continuity for the shifted arrival and the shifted departure process, $\ba{A}^{n,m}(\cdot),$ $\ba{S}^{n,m}(T(\cdot)),$ the result for the shifted queue length process, $\ba{Q}^{n,m}(\cdot),$ follows with the constant $N_4=N_3+N_2$. Using the same idea and \eqref{2b}, we obtain the same result for the shifted immediate workload process, $\ba{W}^{n,m}(\cdot),$ with
$N_5=\frac{N_1}{\min_i{\mu_i}}+\|P\|.$
\end{proof}

\begin{rem}\label{r1}
{
Adapting the techniques in \cite{bramson1998state} we can replace $\epsilon$ in the propositions above
by $\epsilon(n)$ such that $\epsilon(n)\rightarrow 0.$
Let $\mathcal{G}_i^n \subseteq\Omega$, $1\leq i \leq 5$, be the ``good events'' such that the complements of inequalities \eqref{L1}, \eqref{2a}, \eqref{2c}, \eqref{2b} and \eqref{1} hold if we replace $\epsilon$ by $\epsilon(n)$. Also, let
$\mathcal{G}_1^n(B)$ and $\mathcal{G}_2^n(B)$ be as in \eqref{eq:Bound for workload} and \eqref{eq:Bound for queue} with $B=\max\{B_1,B_2\}.$ Denote by $\mathcal{G}_0^n(B)$ the intersection of the previous events.
}
Because $\omega\in \mathcal{G}_0^n(B)$, we know that
$|\bar{\qnp}^{n,m}(t_2)
-\bar{\qnp}^{n,m}(t_1)|
\leq N|t_2-t_1|+\epsilon.
$
Also, by Lemmas~\ref{l1}, \ref{l2}, and by the definition of the shifted processes \eqref{RWa}$\textup{--}$\eqref{RWd}, we have that
$|\bar{\qnp}^{n,m}(0)|\leq B,$
for some positive constant B. In addition, if we replace the bound in \cite[Inequality~4.6]{bramson1998state}, by a general real number, we can again show that the set of Lipschitz functions with this property is compact; see \cite[Lemma 6.3]{ye2005stability}.
\end{rem}
\noindent
By Remark~\ref{r1}, all the requirements in \cite[Section~4.1]{bramson1998state} hold. Thus, we can find a Lipschitz-continuous function $\wt{\qnp}(\cdot)$, such that for $\epsilon(n)\to 0,$
\begin{equation}\label{eq:fluid approximation}
\|\bar{\qnp}^{n,m}(\cdot,\omega)-\wt{\qnp}(\cdot)\|_L\le \epsilon(n), {\hskip 15pt} \forall\ \omega\in\mathcal  G^n_0(B),\ \forall\ m\le nT.
\end{equation}
\begin{prop}\label{p5}
The function $\wt{\qnp}(\cdot)$ is a solution to the fluid model equations on $[0,L]$.
\end{prop}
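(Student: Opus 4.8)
The plan is to obtain $\wt{\qnp}(\cdot)=(\wt A(\cdot),\wt S(\cdot),\wt Q(\cdot),\wt T(\cdot),\wt Y_{L_2}(\cdot),\wt W(\cdot))$ as a uniform limit on $[0,L]$ of the shifted processes $\bar{\qnp}^{n,m_n}(\cdot,\omega)$ along a subsequence (which exists by the compactness underlying \eqref{eq:fluid approximation}, and which by \eqref{eq:fluid approximation} agrees with the approximating Lipschitz function), for suitable $m_n\le nT$ and $\omega\in\mathcal G_0^n(B)$, and then to pass to the limit in the shifted system dynamics \eqref{RWa}--\eqref{RWe} and check that \eqref{eq:Fluid queue}--\eqref{eq:Fluid idle time} hold on $[0,L]$ together with the convention that $\wt Q$ remains $0$ after it first vanishes. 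By Remark~\ref{r1} every limiting component is Lipschitz, and $\wt T_i,\wt Y_{L_2}$ are in addition nondecreasing, $1$-Lipschitz, and vanish at $0$; so only the equations need to be identified.

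First I would dispatch the equations that require nothing but the continuous--mapping principle. Letting $n\to\infty$ in \eqref{RWd} gives \eqref{work concerving}. For \eqref{eq:Fluid queue} I pass to the limit in \eqref{RWa}: by \eqref{L1} and $\lnn\to\lambda$ the term $\bar E^{n,m_n}_i(\cdot)$ converges uniformly on $[0,L]$ to $t\mapsto\lambda_i t$; by \eqref{2a} together with $\|\bar T^{n,m_n}_i-\wt T_i\|_L\to 0$ one gets $\bar S^{n,m_n}_i(\bar T^{n,m_n}_i(\cdot))\to\mu_i\wt T_i(\cdot)$; and by \eqref{2c} likewise $\sum_l\bar\Phi^{n,m_n}_{li}(\bar S^{n,m_n}_l(\bar T^{n,m_n}_l(\cdot)))\to\sum_l\mu_l p_{li}\wt T_l(\cdot)$, uniformly on $[0,L]$. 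This produces \eqref{eq:Fluid queue}. The same scheme applied to \eqref{RWb}, using \eqref{2b} (together with \eqref{2c} and \eqref{L1}) in place of the renewal estimate and $\bar Q^{n,m_n}_i(0)\to\wt Q_i(0)$, gives \eqref{eq:Fluid Im work}; combining \eqref{eq:Fluid queue} and \eqref{eq:Fluid Im work} with $\beta_i\mu_i=1$ also yields $\wt W_i=\beta_i\wt Q_i$, and \eqref{eq:Fluid idle time} will then be immediate once \eqref{eq:Fluid busy time} is in hand, since wherever $\wt Y_{L_2}$ increases one must have $\sum_iR_i(\wt Q)<1$, which forces $\wt Q=0$ and hence $\wt W_1+\wt W_2=0$.

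The hard part will be the integral equation \eqref{eq:Fluid busy time} for $\wt T$, because the allocation function $R_i$ in \eqref{eq:ServiceAllocation} is Lipschitz on $\mathbb R^2_+\setminus\{0\}$ but discontinuous at the origin, so one cannot simply pass to the limit under the integral. I would set $\delta:=\inf\{t\in[0,L]:\wt Q(t)=0\}$. On any subinterval $[0,t]\subset[0,\delta)$, continuity of $\wt Q$ gives $\inf_{[0,t]}|\wt Q|\ge c>0$, hence $\inf_{[0,t]}|\bar Q^{n,m_n}|\ge c/2$ for $n$ large by uniform convergence; since $R_i$ is uniformly continuous on $\{q:|q|\ge c/2\}$, $R_i(\bar Q^{n,m_n}(s))\to R_i(\wt Q(s))$ uniformly on $[0,t]$, and as $0\le R_i\le 1$, dominated convergence gives $\bar T^{n,m_n}_i(t)=\int_0^t R_i(\bar Q^{n,m_n}(s))\,ds\to\int_0^t R_i(\wt Q(s))\,ds$; comparing with $\bar T^{n,m_n}_i(t)\to\wt T_i(t)$ establishes \eqref{eq:Fluid busy time} on $[0,\delta)$. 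Together with the previous paragraph this makes $\wt Q$ solve the ODE \eqref{eq:ODE} on $[0,\delta)$.

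Finally I would deal with $t\ge\delta$. If $\wt Q$ never vanishes on $[0,L)$ the verification is complete. Otherwise, since \eqref{eq:Fluid queue}, \eqref{eq:Fluid busy time} and $\sum_iR_i(\wt Q(s))=1$ all hold on $[0,\delta)$, the derivative computation in the proof of Proposition~\ref{workload is constant} applies verbatim to $\wt Q$, giving $\tau^T\wt Q(t)\equiv\tau^T\wt Q(0)$ on $[0,\delta)$; letting $t\uparrow\delta$ and using $\wt Q(\delta)=0$ forces $\wt Q(0)=0$, hence $\delta=0$. In this remaining case I would invoke the pathwise shifted version of the identity $W_{Tot}=W_G$ from Lemma~\ref{l1}: the shifted total workload equals the workload of a single-server queue started from a state of size $O(\bar Q^{n,m_n}_i(0))\to 0$ and fed by future external renewal input at rate $\to 1$, so on the fluid scale it stays near $0$ and $\bar W^{n,m_n}_{Tot}(\cdot)\to 0$ uniformly; since the limiting total workload equals $\tau^T\wt Q(\cdot)$ with $\tau>0$, this yields $\wt Q\equiv 0$ on $[0,L]$, exactly the required behaviour for $t\ge\delta=0$. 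I expect the only genuine obstacles to be the treatment of the origin — the discontinuity of $R$ in the third paragraph and this last bookkeeping step — the remaining passages to the limit being the routine ones that the estimates imported from \cite{bramson1998state} in Proposition~\ref{p2} were arranged to support.
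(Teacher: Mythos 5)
Your proposal is correct and follows essentially the same route as the paper: identify the limit of the shifted processes with a fluid model solution by passing to the limit in \eqref{RWa}--\eqref{RWe} using \eqref{L1} and Proposition~\ref{p2}, with the integral equation \eqref{eq:Fluid busy time} handled via the regularity of $R$ and the scaling property $R^n(n\cdot)=R(\cdot)$. The only difference is that you treat the discontinuity of $R$ at the origin and the post-$\delta$ convention explicitly (via constancy of $\tau^T\wt Q$ and the workload identification), whereas the paper simply applies the Lipschitz bound $|R(q_2)-R(q_1)|\leq C|q_2-q_1|$ inside the integral and defers the rest to \cite[Proposition~6.2]{bramson1998state}; your added care is legitimate and fills a point the paper's proof glosses over.
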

\begin{proof}
We shall show that $\wt{\qnp}(\cdot)$ verifies the fluid model's equations
\eqref{eq:Fluid queue}$\textup{--}$\eqref{eq:Fluid idle time}. To do this, let
$\delta>0$. As $\epsilon(n)\rightarrow 0,$ we can find large a $n$ such that $\epsilon(n)<\delta.$ It is known by \eqref{eq:fluid approximation} that for large $n,$
\begin{equation*}
\|\bar{\qnp}^{n,m}(\cdot)-\wt{\qnp}(\cdot)\|_L<\delta.
\end{equation*}
Thus, from the heavy traffic assumption, we conclude that $|\lnn-\lambda|<\delta.$
Using the above inequalities, \eqref{L1}, Proposition~\ref{p2}, and the triangle inequality it can be proved in the same way as in \citep[Proposition~6.2]{bramson1998state} that all the functions $\wt{\qnp}(\cdot),$ except for $\wt{T}(\cdot),$ verify the fluid model's equations.
To prove that $\wt{T}(\cdot)$ satisfies \eqref{eq:Fluid busy time}, we need to use the following two properties of the service allocation function: i) $R^{n}(nq)=R(q)$ and ii) $R(\cdot)$ is a Lipschitz continuous function on
$\mathbb R^\I_+\backslash\{0\}$; i.e., there exists a constant $C,$ such that for $q_1, q_2\in \mathbb{R}^\I_+\backslash\{0\}$
\begin{equation*}\label{4.6}
|R(q_2)-R(q_1)|\leq C|q_2-q_1|.
\end{equation*}
Now, using \eqref{RWc}, and the above properties of the service allocation function, we can show that $\wt{T}(\cdot)$ satisfies \eqref{eq:Fluid busy time} and is thus a solution to the fluid model:
\begin{equation*}
\begin{split}
|\wt{T}(t)-\int_{0}^{t} R(\wt{Q}(s))ds|
&= |\wt{T}(t)-\bar{T}^{n,m}(t)
+\int_{0}^{t} R(\bar{Q}^{n,m}(s))ds
-\int_{0}^{t} R(\wt{Q}(s))ds| \\
&\leq |\wt{T}(t)-\bar{T}^{n,m}(t)|
+ \int_{0}^{t} |R(\bar{Q}^{n,m}(s))
-R(\wt{Q}(s))|ds \\
&\leq \delta+
\int_{0}^{t} C|\bar{Q}^{n,m}(s)-\wt{Q}(s)|
\leq \delta+ \int_{0}^{t} C\delta
\leq \delta+  CL \delta.
\end{split}
\end{equation*}

\end{proof}

\subsection{The scaled shifted total workload process}\label{The scaled shifted total workload process}

In this section, we see that we can approximate the scaled shifted total workload process by a solution to the fluid model. We begin with a preliminary result.

\begin{prop}\label{extra1}
For appropriately large $n\in \mathbb{N}$ and $\epsilon>0,$ we have that
\begin{equation}\label{eq:extra1}
\begin{split}
&\Prob \Big(\max_{m< nT} \Big \|
\sum_{i=1}^{\I}\frac1n\sum_{j=Z^{n,m}_i(\cdot)}^{B^{n,m}_i(\cdot)}
s'_i(j)-
\sum_{k=1}^{\infty}\beta^T(P^T)^{k}
\ba{Q}^{n,m}(\cdot)\Big \|_L
>\epsilon\Big)<\epsilon,
\end{split}
\end{equation}
where
\begin{equation*}
\begin{split}
Z^{n,m}_i(\cdot)&=n\ba{S}^{n,m}_i
\big( \ba{T}^{n,m}_i(\cdot)\big)
+S^{n}_i\big( T^{n}_i(nm)\big)+1,\\
B^{n,m}_i(\cdot)&=Q^{n}_i(0)
+n\ba{A}^{n,m}_i( \cdot)+A^{n}_i(nm).
\end{split}
\end{equation*}
\end{prop}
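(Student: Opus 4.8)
The plan is to identify the inner double sum in \eqref{eq:extra1} as the shifted fluid scaling of the future-workload term of \eqref{eq:total_workload} and then to prove a uniform weak law of large numbers for it, in the spirit of the estimate \eqref{2b}. First I would unwind the shifted scaling: one checks that $n\bar S^{n,m}_i(\bar T^{n,m}_i(t))+S^n_i(T^n_i(nm))=S^n_i(T^n_i(nm+nt))$ and that $Q^n_i(0)+n\bar A^{n,m}_i(t)+A^n_i(nm)=Q^n_i(0)+A^n_i(nm+nt)$, so the summation index $j$ runs precisely over the customers present at node $i$ at time $nm+nt$, of which, by \eqref{a}, there are $Q^n_i(nm+nt)=n\bar Q^{n,m}_i(t)$. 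By construction, $\{s'_i(j)\}_{j\ge1}$ is, for each fixed $i$, a sequence of i.i.d.\ random variables with mean $\tau_i-\beta_i$ (cf.\ \eqref{eq:total S requirement} and \eqref{eq:Total time}) and finite second moment (the latter because $P$ has spectral radius less than one and the service requirements have finite second moments, cf.\ \eqref{eq:secont moment of Total time}). Since $\sum_{k=1}^{\infty}\beta^T(P^T)^k=(I-P^T)^{-1}-I$, the right-hand side of \eqref{eq:extra1} equals $\sum_{i=1}^{\I}(\tau_i-\beta_i)\bar Q^{n,m}_i(\cdot)$; hence it is enough to show, for each $i=1,\I$, that
\[
\Prob\Big(\max_{m<nT}\big\|\tfrac{1}{n}\sum_{j=Z^{n,m}_i(\cdot)}^{B^{n,m}_i(\cdot)}s'_i(j)-(\tau_i-\beta_i)\bar Q^{n,m}_i(\cdot)\big\|_L>\epsilon\Big)<\epsilon ,
\]
and then to sum over $i$ by the triangle inequality, after replacing $\epsilon$ by $\epsilon/\I$.

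The decisive structural observation — the same one exploited in the proof of Lemma~\ref{l2} — is that the summation index runs over $j>S^n_i(T^n_i(nm+nt))$, i.e.\ over customers still waiting (not yet served) at node $i$ at time $nm+nt$. For such $j$ none of the routing indicators $\f^{(k)}_{il}(j)$ and future service requirements $v_{il}^{(k+1)}(j)$ that build $s'_i(j)$ has yet been used by the dynamics, so $\{s'_i(j):j>S^n_i(T^n_i(nm+nt))\}$ is an i.i.d.\ mean-$(\tau_i-\beta_i)$ family, independent of the random summation endpoints $Z^{n,m}_i(t)$ and $B^{n,m}_i(t)$. I would combine this with two facts already available: on the good event $\mathcal{G}_0^n(B)$ of Remark~\ref{r1} the number of summands is at most $nB'$ for a constant $B'$ independent of $n,m,t$ (Lemma~\ref{l2} together with Proposition~\ref{p3}); and the processes $t\mapsto\frac{1}{n}S^n_i(T^n_i(nm+nt))$ and $t\mapsto\bar Q^{n,m}_i(t)$ are almost Lipschitz continuous, uniformly in $m<nT$ (Proposition~\ref{p3}).

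With these ingredients the per-$i$ estimate displayed above is exactly of the type treated by \cite[Proposition~4.2]{bramson1998state}: a random sum of i.i.d.\ summands, with an $O(n)$ and almost-Lipschitz number of terms, the summands beyond the starting index being independent of the endpoints, and the conclusion demanded uniformly over the $O(n)$ shifts $m<nT$ and over $[0,L]$. I would invoke that proposition in the same way the estimate \eqref{2b} was derived within Proposition~\ref{p2}, obtaining the per-$i$ bound, and then sum over $i=1,\I$ to conclude \eqref{eq:extra1}.

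I expect the main obstacle to be the double uniformity — over the $O(n)$ shifts $m<nT$ and over $t\in[0,L]$ — in the presence of the random, history-dependent summation limits $Z^{n,m}_i(\cdot)$ and $B^{n,m}_i(\cdot)$; this is precisely what the estimates of \cite[Section~4]{bramson1998state} are designed to handle, so the real work lies in verifying that their hypotheses apply here, namely the stochastic boundedness of Lemma~\ref{l2} and the almost-Lipschitz continuity of Proposition~\ref{p3}, together with the anticipation-free independence noted above. A secondary, routine point is to record the finiteness of the second moment of $s'_i(j)$ needed for the moment hypothesis of \cite[Proposition~4.2]{bramson1998state}.
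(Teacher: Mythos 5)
Your plan follows essentially the same route as the paper's proof: identify the double sum as a random sum of the i.i.d.\ future service requirements over the roughly $n\bar Q_i^{n,m}(t)$ customers present, reduce to a per-node estimate via the triangle inequality, and apply \cite[Proposition~4.2]{bramson1998state} using the stochastic boundedness of Lemma~\ref{l2} and the almost-Lipschitz continuity of Proposition~\ref{p3}, finally extending from $m=0$ to all $m<nT$ by multiplying the error bound by the number of shifts. The paper writes the centering term as $\sum_{k\ge1}\beta^T\bar p_i^{(k)}\bar Q_i^{n,m}(\cdot)$ rather than $(\tau_i-\beta_i)\bar Q_i^{n,m}(\cdot)$, but these coincide, so there is no substantive difference.
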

\begin{proof}
The random variables $s'_i(j)$
depend on $n$, but to keep the notation simple we omit the index $n$.
Note that \eqref{eq:extra1} can be written as
\begin{equation*}
\begin{split}
&\Prob \Big(\max_{m< nT} \Big \|
\sum_{i=1}^{\I}\frac1n\sum_{j=Z^{n,m}_i(\cdot)}^{B^{n,m}_i(\cdot)}
s'_i(j)-
\sum_{k=1}^{\infty}\beta^T\ba{p}_i^{(k)}
\ba{Q}_i^{n,m}(\cdot)\Big \|_L
>\epsilon\Big)<\epsilon,\\
\end{split}
\end{equation*}
where
$\beta^T\ba{p}^{(k)}_i=\beta_1p^{(k)}_{i1}+\beta_2p^{(k)}_{i2}.$
We have that
\begin{equation*}
\begin{split}
&\Prob \Big(\max_{m< nT}\Big \|
\sum_{i=1}^{\I}\frac1n\sum_{j=Z^{n,m}_i(\cdot)}^{B^{n,m}_i(\cdot)}
s'_i(j)-
\sum_{k=1}^{\infty}\beta^T\ba{p}_i^{(k)}
\ba{Q}_i^{n,m}(\cdot)\Big\|_L
>\epsilon\Big)\\
&\quad\leq \sum_{i=1}^{\I} \Prob \Big(\max_{m< nT}\Big \|
\frac1n\sum_{j=Z^{n,m}_i(\cdot)}^{B^{n,m}_i(\cdot)}
s'_i(j)-
\sum_{k=1}^{\infty}\beta^T\ba{p}_i^{(k)}
\ba{Q}_i^{n,m}(\cdot)\Big\|_L
>\frac{\epsilon}{2}\Big),
\end{split}
\end{equation*}
so it is enough to show that the last term is sufficiently small for $i=1,2$. First, we shall prove it for $m=0.$
We know  by Proposition~\ref{p3} that the shifted queue length process is Lipschitz continuous and by Proposition~\ref{l2} that the shifted queue length process at zero is stochastically bounded; i.e., for $t\leq L$, $|Q_i^n(t)|\leq(N_4+B_2)Ln$. By \citep[Proposition 4.2]{bramson1998state}, we derive
\begin{equation*}
\begin{split}
\Prob \Big(\Big \|
\sum_{j=S^{n}_i\big( T^{n}_i(\cdot)\big)}^
{Q^n_i(0)+A^{n}_i(\cdot)}
s'_i(j)-
\sum_{k=1}^{\infty}\beta^T\ba{p}_i^{(k)}
Q_i^n(\cdot)\Big\|_{N_4Ln}
>\frac{\epsilon (N_4+B_2)Ln}{2}\Big)\leq \frac{\epsilon}{2(N_4+B_2)Ln},
\end{split}
\end{equation*}
which leads to
\begin{equation*}
\begin{split}
\Prob \Big(\Big\|\frac{1}{n}
\sum_{j=Z^{n,0}_i(\cdot)}^
{B^{n,0}_i(\cdot)}
s'_i(j)-
\sum_{k=1}^{\infty}\beta^T\ba{p}_i^{(k)}
\ba{Q}_i^{n,0}(\cdot)\Big\|_{L}
>\frac{\epsilon (N_4+B_2)L}{2}\Big)
\leq \frac{\epsilon}{2(N_4+B_2)n},
\end{split}
\end{equation*}
where $B^{n,0}_i(\cdot)-Z^{n,0}_i(\cdot)= \ba{Q}_i^{n,0}(\cdot).$
Multipling the error bounds by the number of processes $[nT]+1$ and choosing a suitable $\epsilon$ we derive
\begin{equation*}
\begin{split}
\Prob \Big(\Big\|\frac{1}{n}
\sum_{j=Z^{n,0}_i(\cdot)}^
{B^{n,m}_i(\cdot)}
s'_i(j)-
\sum_{k=1}^{\infty}\beta^T\ba{p}_i^{(k)}
\ba{Q}_i^{n,m}(\cdot)\Big\|_{L}
>\frac{\epsilon}{2}\Big)\leq \frac{\epsilon}{2}.
\end{split}
\end{equation*}
\end{proof}
Adapting Remark~\ref{r1}, we can replace $\epsilon$ in the proposition above
$\epsilon$ by $\epsilon(n)$ such that $\epsilon(n)\rightarrow 0$ as $n\rightarrow\infty.$ This will be done in the next result, where we combine all technical estimates so far to construct a ``good'' event.

\begin{prop}\label{prop:good sets}
Let $\epsilon >0$ and $\mathcal{G}_0^n \subseteq\Omega$ be as in Remark~\ref{r1}. Let $\mathcal{G}_6^n\subseteq\Omega$ be the event
such that the complement of \eqref{eq:extra1} holds if we replace $\epsilon$ by $\epsilon(n)$. Define the event
$\mathcal{G}^n(B)=\mathcal{G}_0^n(B) \cap \mathcal{G}_6^n.$
Then
\begin{equation*}
\lim_{n\rightarrow\infty}\Prob{\big(\mathcal{G}^n(B)\big) }
\geq 1-\epsilon.
\end{equation*}
\end{prop}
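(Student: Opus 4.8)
The plan is to express $\mathcal{G}^n(B)$ as a finite intersection of the individual ``good'' events and then bound the probability of its complement by a union bound. By Remark~\ref{r1}, $\mathcal{G}_0^n(B)=\big(\bigcap_{i=1}^{5}\mathcal{G}_i^n\big)\cap\mathcal{G}_1^n(B)\cap\mathcal{G}_2^n(B)$, so that
\begin{equation*}
\big(\mathcal{G}^n(B)\big)^c=\Big(\bigcup_{i=1}^{5}(\mathcal{G}_i^n)^c\Big)\cup(\mathcal{G}_1^n(B))^c\cup(\mathcal{G}_2^n(B))^c\cup(\mathcal{G}_6^n)^c .
\end{equation*}
I would then split the eight events into two groups: the six events carrying the $\epsilon\mapsto\epsilon(n)$ sharpening, namely $\mathcal{G}_1^n,\dots,\mathcal{G}_5^n$ and $\mathcal{G}_6^n$, and the two ``bounded-at-zero'' events $\mathcal{G}_1^n(B)$ and $\mathcal{G}_2^n(B)$.

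For the first group, the complement probabilities are controlled directly by estimates already established: $\Prob((\mathcal{G}_1^n)^c)$ by \eqref{L1}, $\Prob((\mathcal{G}_2^n)^c)$, $\Prob((\mathcal{G}_3^n)^c)$, $\Prob((\mathcal{G}_4^n)^c)$ by Proposition~\ref{p2} (inequalities \eqref{2a}, \eqref{2c}, \eqref{2b}), $\Prob((\mathcal{G}_5^n)^c)$ by Proposition~\ref{p3} (inequality \eqref{1}), and $\Prob((\mathcal{G}_6^n)^c)$ by Proposition~\ref{extra1} (inequality \eqref{eq:extra1}). After the adaptation of Bramson's weak-law argument recalled in Remark~\ref{r1} — and, for $\mathcal{G}_6^n$, in the paragraph following Proposition~\ref{extra1} — each of these six probabilities is at most $\epsilon(n)$, and since $\epsilon(n)\to 0$ all six terms vanish as $n\to\infty$.

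For the second group I would invoke Lemmas~\ref{l1} and \ref{l2}. Applying \eqref{eq:Bound for workload} with $\epsilon/2$ yields a constant $B_1>0$ with $\liminf_{n}\Prob(\mathcal{G}_1^n(B_1))\ge 1-\epsilon/2$, and \eqref{eq:Bound for queue} with $\epsilon/2$ yields $B_2>0$ with $\liminf_{n}\Prob(\mathcal{G}_2^n(B_2))\ge 1-\epsilon/2$; as in Remark~\ref{r1} we set $B=\max\{B_1,B_2\}$. Since $\{\max_{m\le nT}\bar{W}^{n,m}_{Tot}(0)\le B\}$ and $\{\max_{m\le nT}\bar{Q}^{n,m}_{i}(0)\le B\}$ are monotone in the threshold $B$, enlarging $B_1,B_2$ to $B$ only enlarges them, so $\liminf_{n}\Prob(\mathcal{G}_j^n(B))\ge 1-\epsilon/2$ for $j=1,2$. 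Combining the two groups by subadditivity,
\begin{equation*}
\limsup_{n\to\infty}\Prob\big((\mathcal{G}^n(B))^c\big)\le 0+\tfrac{\epsilon}{2}+\tfrac{\epsilon}{2}=\epsilon ,
\end{equation*}
that is, $\liminf_{n\to\infty}\Prob(\mathcal{G}^n(B))\ge 1-\epsilon$, which is the assertion (the $\lim$ in the statement is to be read as $\liminf$). I do not expect a genuine obstacle here: the proposition is essentially a bookkeeping step assembling the estimates of Sections~\ref{Uniform fluid approximation} and \ref{The scaled shifted total workload process}. The only points needing a little care are (i) using a \emph{single} constant $B$ simultaneously for the workload and queue-length bounds (and noting that the constants appearing in Propositions~\ref{p2} and \ref{extra1} still give $O(\epsilon(n))$ bounds when $B_2$ is replaced by the larger $B$), handled by the monotonicity remark above, and (ii) checking that the $\epsilon\mapsto\epsilon(n)$ upgrade is legitimate for every one of \eqref{L1}, \eqref{2a}, \eqref{2c}, \eqref{2b}, \eqref{1} and \eqref{eq:extra1}, which is precisely the adaptation of Bramson's estimates already cited in Remark~\ref{r1}.
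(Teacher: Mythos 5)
Your proposal is correct and follows essentially the same route as the paper: the same decomposition of $\mathcal{G}^n(B)$ into the six $\epsilon(n)$-controlled events plus the two boundedness events, a union bound, and the split $6\epsilon(n)+\epsilon/2+\epsilon/2$. The extra care you take about using a single constant $B$ via monotonicity is a sensible (if implicit in the paper) refinement, but does not change the argument.
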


\begin{proof}
Note that
$\mathcal{G}^n(B)=\bigcap_{i=1}^{6} \mathcal{G}_i^n \cap \mathcal{G}_1^n(B)\cap \mathcal{G}_2^n(B).$
We denote $(\mathcal{G}_i)^c$ the complement of the event $\mathcal{G}_i.$ For $1\leq i \leq 6$, we have by construction that
$
\Prob{((\mathcal{G}_i^n)^c)}\leq \epsilon(n).
$
Also, by \eqref{eq:Bound for workload} and \eqref{eq:Bound for queue}
we can choose a constant $B$ such that for $i=1,\I$
\begin{equation*}
\lim_{n\rightarrow\infty}\Prob{
\big(\mathcal{G}_i^n(B) \big)}\geq 1-\frac{\epsilon}{2}.
\end{equation*}
Combining the above inequalities, we obtain as $n\rightarrow\infty,$
\begin{equation*}
\begin{split}
\Prob{(\mathcal{G}^n(B))}=
\Prob{\Big(\Big(\bigcup_{i=1}^{6}
(\mathcal{G}_i^n)^c
\cup (\mathcal{G}_1^n(B))^c
\cup (\mathcal{G}_2^n(B))^c \Big)^c\Big)}
=&1-\Prob{\Big(\bigcup_{i=1}^{6}(\mathcal{G}_i^n)^c
\cup (\mathcal{G}_1^n(B))^c
\cup (\mathcal{G}_2^n(B))^c
\Big)}\\
\geq& 1-6\epsilon(n)-\epsilon \rightarrow 1 -\epsilon.
\end{split}
\end{equation*}
\end{proof}

In the sequel, we assume that $\omega\in\mathcal{G}^n(B).$ In other words, Proposition~\ref{prop:good sets} allows us to use a sample-path approach.
As a final step towards proving state-space collapse, we use this approach to show that there exists a fluid approximation for the total workload of the system.

\begin{prop}\label{Prop:extra}
There exists a solution of the fluid model equations, $\wt{W}_{Tot}(\cdot)$, such that for $m\leq nT$

\begin{equation*}\label{In:approximation of workload}
\|\bar{W}^{n,m}_{Tot}(\cdot,\omega)-\wt{W}_{Tot}(\cdot)\|_L\le
 \epsilon(n)
{\hskip 15pt} \forall\ \omega\in \mathcal{G}^n(B).
\end{equation*}
\end{prop}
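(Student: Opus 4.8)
To prove Proposition~\ref{Prop:extra}, the plan is to split the shifted total workload into its immediate and its future (re-entrant) components and to approximate each one separately, using the fluid approximation \eqref{eq:fluid approximation} and Proposition~\ref{extra1}. First I would fix $\omega\in\mathcal{G}^n(B)$, $m\le nT$ and $t\le L$, and record the identity
\begin{equation*}
\bar{W}^{n,m}_{Tot}(t)=\sum_{i=1}^{\I}\bar{W}^{n,m}_i(t)+\sum_{i=1}^{\I}\frac1n\sum_{j=Z^{n,m}_i(t)}^{B^{n,m}_i(t)}s'_i(j),
\end{equation*}
which follows from the definition \eqref{eq:total_workload} of $W_{Tot}$ and the shifted scalings, since $S^n_i\big(T^n_i(nm+nt)\big)=n\bar{S}^{n,m}_i\big(\bar{T}^{n,m}_i(t)\big)+S^n_i\big(T^n_i(nm)\big)$ and $Q^n_i(0)+A^n_i(nm+nt)=Q^n_i(0)+n\bar{A}^{n,m}_i(t)+A^n_i(nm)$, so that the summation limits are exactly the $Z^{n,m}_i$ and $B^{n,m}_i$ of Proposition~\ref{extra1}.

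Next I would bound the two pieces. Since $\omega\in\mathcal{G}_0^n(B)$, applying \eqref{eq:fluid approximation} to $\bar{W}^{n,m}$ and to $\bar{Q}^{n,m}$ and invoking the fluid identity $\wt{W}_i(t)=\beta_i\wt{Q}_i(t)$ gives $\big\|\sum_{i}\bar{W}^{n,m}_i(\cdot)-\beta^T\wt{Q}(\cdot)\big\|_L\le\epsilon(n)$. Since $\omega\in\mathcal{G}_6^n$, Proposition~\ref{extra1} gives $\big\|\sum_{i}\frac1n\sum_{j=Z^{n,m}_i(\cdot)}^{B^{n,m}_i(\cdot)}s'_i(j)-\sum_{k=1}^{\infty}\beta^T(P^T)^k\bar{Q}^{n,m}(\cdot)\big\|_L\le\epsilon(n)$. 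It then remains to replace $\bar{Q}^{n,m}$ by $\wt{Q}$ inside the infinite series: writing the difference as $\sum_{k\ge1}\beta^T(P^T)^k\big(\bar{Q}^{n,m}(t)-\wt{Q}(t)\big)$ and using $\|\bar{Q}^{n,m}-\wt{Q}\|_L\le\epsilon(n)$ together with $\sum_{k\ge1}\|(P^T)^k\|<\infty$ — which holds because $P$ is substochastic with spectral radius strictly less than one — bounds this term by $C'\epsilon(n)$, where $C'$ depends only on $P$ and $\beta$.

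I would then set $\wt{W}_{Tot}(t):=\beta^T\wt{Q}(t)+\sum_{k=1}^{\infty}\beta^T(P^T)^k\wt{Q}(t)=\beta^T(I-P^T)^{-1}\wt{Q}(t)=\tau^T\wt{Q}(t)$. By Proposition~\ref{p5}, $\wt{Q}(\cdot)$ is a solution of the fluid model equations, so $\wt{W}_{Tot}(\cdot)$ is precisely the fluid total workload of that solution and, by Proposition~\ref{workload is constant}, it equals the constant $\tau^T\wt{Q}(0)$. Combining the three estimates above by the triangle inequality yields $\|\bar{W}^{n,m}_{Tot}(\cdot,\omega)-\wt{W}_{Tot}(\cdot)\|_L\le(2+C')\epsilon(n)$ for all $\omega\in\mathcal{G}^n(B)$ and all $m\le nT$; relabeling the error function gives the claim. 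The main obstacle I expect is the uniform control of the infinite series — one must make the tail $\sum_{k\ge N}\beta^T(P^T)^k$ negligible uniformly in $n$ and in the (bounded) arguments $\bar{Q}^{n,m}$ and $\wt{Q}$, which is exactly where the spectral-radius assumption on $P$ is used — while the rest is bookkeeping: matching the shifted summation limits to $Z^{n,m}_i,B^{n,m}_i$ and absorbing the finitely many multiplicative constants ($L$, $\|\beta\|$, $C'$) into $\epsilon(n)$.
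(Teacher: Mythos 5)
Your proposal is correct and follows essentially the same route as the paper: the same decomposition of $\bar{W}^{n,m}_{Tot}$ into immediate plus future workload with summation limits $Z^{n,m}_i,B^{n,m}_i$, the same three-term triangle inequality controlled by \eqref{eq:fluid approximation} and Proposition~\ref{extra1}, and the same choice $\wt{W}_{Tot}=\beta^T(I-P^T)^{-1}\wt{Q}=\tau^T\wt{Q}$. Your constant $C'$ for the series term is exactly the paper's $\max_i(\tau_i-\beta_i)$, since $\sum_{k\ge1}\beta^T(P^T)^k=\tau^T-\beta^T$.
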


\begin{proof}
Take $\epsilon >0$ and let $\omega\in\mathcal{G}^n(B)$ as defined in Proposition~\ref{prop:good sets}.
Let the functions $\wt Q(t)$ and  $\wt W(t)$ be such so that they satisfy \eqref{eq:fluid approximation}. Define the function
\begin{equation*}\label{eq:Sofmw}
\wt{W}_{Tot}(t)=\beta^T(I-P^T)^{-1}\wt Q(t),
\end{equation*}
which is a solution to the fluid model equations because $\wt Q(t)$ is.
Omitting again the index $n$ in the quantity $s'_{i}(j),$ by the definition of the total workload, we have that
\begin{equation*}\label{eq:ShTW}
\ba{W}^{n,m}_{Tot}(t)=\sum_{i=1}^{2}\ba{W}^{n,m}_i(t)+
\sum_{i=1}^{\I}
\frac1n\sum_{j=Z_i^{n,m}(t)}
^{B_i^{n,m}(t)}
s'_i(j),
\end{equation*}
where the quantities $Z_i^{n,m}(t)$ and $B_i^{n,m}(t)$ are defined in Proposition~\ref{extra1}. Using the triangular inequality, we thus have that
\begin{equation*}
\begin{split}
\Big\|\bar{W}_{Tot}^{n,m}(\cdot)-\wt{W}_{Tot}(\cdot)\Big\|_L\leq&
\sum_{i=1}^{\I}\Big\|\bar{W}_i^{n,m}(\cdot)-\wt{W}_i(\cdot)\Big\|_L
+\Big\|
\sum_{i=1}^{\I}\frac1n\sum_{j=Z_i(t)}^{B_i(\cdot)}
s'_i(j)-\sum_{k=1}^{\infty}\beta^T(P^T)^{k} \ba{Q}^{n,m}(\cdot)\Big\|_L\\
&\quad +\Big\|\sum_{k=1}^{\infty}\beta^T(P^T)^{k}
(\ba{Q}^{n,m}(\cdot)-\wt Q(\cdot))\Big\|_L.
\end{split}
\end{equation*}
By \eqref{eq:fluid approximation} and \eqref{eq:extra1},
$
\Big\|\bar{W}_{Tot}^{n,m}(\cdot)-\wt{W}_{Tot}(\cdot)\Big\|_L
\leq
(2
+1
+\max_{i}{(\tau_i-\beta_i)})\epsilon(n).
$
\end{proof}

\subsection{State-space collapse}\label{State-space collapse}

Now, we can state and prove SSC for the diffusion queue length process.
\begin{thm}[SSC]\label{t2} Assume that
	\begin{equation}\label{as}
	|\wh{Q}^n(0)-\Delta\wh{W}_{Tot}^n(0)|
	\to 0 {\hskip 15 pt}\textrm{in probability,}
	\end{equation}
as $n\to \infty$. Then for any $T>0$,
	\begin{equation}\label{6}
	\|\wh{Q}^n(\cdot)-\Delta\wh{W}_{Tot}^n(\cdot)\|_{T}
	\to 0 {\hskip 15 pt}\textrm{in probability,}
	\end{equation}
	as $n\to \infty$.
\end{thm}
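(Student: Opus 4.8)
The plan is to transfer, on a single high–probability event, the deterministic fluid estimates of Sections~\ref{Uniform fluid approximation} and~\ref{The scaled shifted total workload process} through the change of variables \eqref{T1a}--\eqref{T1b} between the diffusion and the shifted scalings, and then to invoke the uniform relaxation of the fluid model (Theorem~\ref{Thm: Convergence of the fluid model}) to collapse $\wh Q^n$ onto $\Delta\wh W^n_{Tot}$. Fix $T>0$ and $\eta>0$, and let $C_1$ be the Lipschitz constant of $\Delta$. First I would apply Theorem~\ref{Thm: Convergence of the fluid model} with accuracy $\eta':=\eta/(3(1+C_1))$ to obtain a $t_0\geq0$ (independent of $M$) such that every fluid model solution $\wt Q(\cdot)$ with $\wt Q(0)\in[0,M]^2$ satisfies $|\wt Q(t)-\Delta(\tau^T\wt Q(0))|\leq\eta'$ for all $t>t_0$; recall $\tau^T\wt Q(\cdot)$ is constant in time by Proposition~\ref{workload is constant}. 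I then set $L:=t_0+2$, take $B$ as in Lemmas~\ref{l1} and~\ref{l2} (with the role of $\epsilon$ played by $\eta/2$), and choose $\delta>0$ small (how small is fixed at the end). On the event $\mathcal G^n(B)\cap\{|\wh Q^n(0)-\Delta\wh W^n_{Tot}(0)|\leq\delta\}$, which has probability at least $1-\eta$ for all large $n$ by Proposition~\ref{prop:good sets} and hypothesis~\eqref{as}, I will estimate $\|\wh Q^n(\cdot)-\Delta\wh W^n_{Tot}(\cdot)\|_T$ deterministically.

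Given $t\in[0,T]$, write $n^2t=nm+ns$ with $m\in\{0,\ldots,[nT]\}$ and $s\in[0,L]$, so that $\wh Q^n(t)=\bar Q^{n,m}(s)$ and $\wh W^n_{Tot}(t)=\bar W^{n,m}_{Tot}(s)$ by \eqref{T1a}--\eqref{T1b}. By \eqref{eq:fluid approximation}, Proposition~\ref{p5} and Proposition~\ref{Prop:extra} there is a fluid model solution $\wt Q(\cdot)$ (depending on $m$) with
\begin{equation*}
\|\bar Q^{n,m}(\cdot)-\wt Q(\cdot)\|_L\leq\epsilon(n),\qquad \|\bar W^{n,m}_{Tot}(\cdot)-\tau^T\wt Q(\cdot)\|_L\leq\epsilon(n),
\end{equation*}
and, for $n$ large, $\wt Q(0)\in[0,B+1]^2$. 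Using the Lipschitz continuity of $\Delta$ together with the constancy of $\tau^T\wt Q(\cdot)$, this yields
\begin{equation*}
|\wh Q^n(t)-\Delta\wh W^n_{Tot}(t)|\leq (1+C_1)\epsilon(n)+|\wt Q(s)-\Delta(\tau^T\wt Q(0))|.
\end{equation*}
When $nt\geq t_0$ one may choose $m$ so that $s\in(t_0,L]$, and then the middle term is at most $\eta'$ by Theorem~\ref{Thm: Convergence of the fluid model}; this disposes of all $t\in[t_0/n,T]$.

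The remaining range $t\in[0,t_0/n)$ is the delicate one: there one is forced to take $m=0$, so $s=nt<t_0$, the fluid solution $\wt Q(\cdot)$ has not yet relaxed, and Theorem~\ref{Thm: Convergence of the fluid model} is useless. The point is that \emph{this} fluid solution starts essentially on the invariant manifold: since $\wh Q^n(0)=\bar Q^{n,0}(0)$ and $\wh W^n_{Tot}(0)=\bar W^{n,0}_{Tot}(0)$, combining the two displays above at time $0$ with $|\wh Q^n(0)-\Delta\wh W^n_{Tot}(0)|\leq\delta$ gives $|\wt Q(0)-\Delta(\tau^T\wt Q(0))|\leq\delta+(1+C_1)\epsilon(n)$. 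As $\Delta(\tau^T\wt Q(0))\in\mathcal I$ and $\mathcal I$ is bounded away from the line $\{x_2=K_2\}$ by \eqref{In:Bot.AS}, for $\delta$ small $\wt Q(0)\in\reg_3\cup\reg_4$. There the local analysis of Section~\ref{Local analysis: establishing monotonicity} shows that the reduced coordinate $y_2$ of the time-changed trajectory is monotone in time and converges to $\Delta_2(\tau^T\wt Q(0))<K_2$, hence never leaves $\reg_3\cup\reg_4$; since the fluid workload $\tau_1y_1+\tau_2y_2$ is constant, $y_1$ is an affine function of $y_2$, so $\sup_{u\geq0}|y(u)-\Delta(\tau^T\wt Q(0))|\leq C\,|y(0)-\Delta(\tau^T\wt Q(0))|$ for a constant $C$ depending only on $\tau$, and time-changing back ($\wt Q=y\circ G$, as in the sketch of Theorem~\ref{Thm: Convergence of the fluid model}) the same bound holds for $\wt Q$. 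Hence on this range the middle term is at most $C(\delta+(1+C_1)\epsilon(n))$. Altogether, on the chosen event,
\begin{equation*}
\|\wh Q^n(\cdot)-\Delta\wh W^n_{Tot}(\cdot)\|_T\leq (1+C_1)\epsilon(n)+\max\{\eta',\,C(\delta+(1+C_1)\epsilon(n))\},
\end{equation*}
and choosing $\delta$ small and then $n$ large makes the right-hand side smaller than $\eta$; since $\eta>0$ was arbitrary, \eqref{6} follows. The main obstacle is exactly the treatment of the initial layer $t\in[0,t_0/n)$: there the long-time fluid estimate of Theorem~\ref{Thm: Convergence of the fluid model} gives nothing, and one must instead marry hypothesis~\eqref{as} (the scaled initial state is near the invariant manifold) with the \emph{local} stability of that manifold, i.e.\ the monotonicity/constancy structure of Section~\ref{Local analysis: establishing monotonicity} together with Proposition~\ref{workload is constant}.
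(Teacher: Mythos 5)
Your proposal is correct and follows the same architecture as the paper's proof: cover $[0,n^2T]$ by the initial layer plus the shifted windows, approximate each shifted pair $(\bar Q^{n,m},\bar W^{n,m}_{Tot})$ by a fluid model solution on the good event of Proposition~\ref{prop:good sets}, push the error through the Lipschitz lifting map, and invoke Theorem~\ref{Thm: Convergence of the fluid model} for times past $t_0$. The one genuine point of divergence is the initial layer $t\in[0,t_0/n)$: the paper disposes of it by combining hypothesis~\eqref{as} with Proposition~\ref{starting on IM} and then citing Bramson's Lemma~6.1 of \citep{bramson1998state} (a fluid solution starting $\epsilon$-close to the invariant manifold stays uniformly close on $[0,L^*]$), whereas you prove the needed stability estimate directly from the paper's own local analysis -- monotonicity of the reduced coordinate $y_2$ in $\reg_3\cup\reg_4$, constancy of $\tau^Ty$ by Proposition~\ref{workload is constant}, and the fact that the invariant manifold satisfies $x_2\le\frac{\rho_2}{\rho_1}K_1<K_2$ so that a $\delta$-neighbourhood of it lies in $\reg_3\cup\reg_4$ -- obtaining $\sup_{u\ge0}|y(u)-\Delta(\tau^Ty(0))|\le C|y(0)-\Delta(\tau^Ty(0))|$ with $C=\max\{1,\tau_2/\tau_1\}$. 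Your version is more self-contained and exploits the explicit two-dimensional structure; the paper's citation is shorter and would survive in settings where no such explicit monotone reduction is available. Either way the conclusion is the same, and the small details you gloss over (that the monotone limit of $y_2$ is strictly below $K_2$ in every case, so the trajectory cannot exit $\reg_3\cup\reg_4$) are all supported by the case analysis of Section~\ref{Convergence to Invariant Manifold}.
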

\begin{proof}
Take $\epsilon >0$ and let $\omega\in\mathcal{G}^n(B)$ as defined in Proposition~\ref{prop:good sets}.
 By Theorem~\ref{Thm: Convergence of the fluid model}, we know that there exists a constant $L^{\ast}$,
such that for $t\geq L^{\ast},$
\begin{equation}\label{t21}
|\wt{Q}(t)-\Delta \wt{W}_{Tot}(t)|\le \epsilon.
\end{equation}
Fix $L>L^{\ast}+1$. It is known that
\begin{equation*}
[0,n^2T]\subseteq [0,nL^{\ast}] \bigcup_{m=0}^{[nT]}
 [n(m+L^{\ast}),n(m+L)].
\end{equation*}
So, it suffices to show that
\begin{equation}\label{t22}
\max_{m\leq nT}\sup_{t\in[L^{\ast},L]}|\bar{Q}^{n,m}(t)-\Delta
\bar{W}_{Tot}^{n,m}(t)|<\epsilon
\end{equation}
and
\begin{equation}\label{t23}
\sup_{t\in[0,L^{\ast}]}|\bar{Q}^{n,0}(t)-\Delta \bar{W}_{Tot}^{n,0}(t)|<\epsilon.
\end{equation}
Then, by using \eqref{T1a} and \eqref{T1b}, we derive \eqref{6}.
To prove \eqref{t22}, we know that by \eqref{eq:fluid approximation} and Proposition~\ref{Prop:extra}, for $t\leq L,$
\begin{equation}\label{t24}
|\bar{Q}^{n,m}(t)-\wt{Q}(t)|<\epsilon
\end{equation}
and \begin{equation}\label{t25}
|\bar{W}_{Tot}^{n,m}(t)-\wt{W}_{Tot}(t)|<\epsilon.
\end{equation}
Recall that the lifting map is Lipschitz continuous with constant
$C_1$. Combining this with \eqref{t21}, \eqref{t24}, and \eqref{t25}, we get \eqref{t22}.

To prove \eqref{t23}, we have
by \eqref{eq:fluid approximation} that for $t\leq L$,
\begin{center}
$
|\bar{Q}^{n,0}(t)-\wt{Q}(t)|<\epsilon
$
and
$
|\bar{W}_{Tot}^{n,0}(t)-\wt{W}_{Tot}(t)|<\epsilon.
$
\end{center}
Also, by Assumption~\eqref{as} we obtain
$
|\bar{Q}^{n,0}(0)-\Delta \bar{W}_{Tot}^{n,0}(0)|<\epsilon.
$
By the last three inequalities and Proposition~\ref{starting on IM},
we can apply \citep[Lemma~6.1]{bramson1998state} and derive
$
|\wt{Q}(t)-\Delta \wt{W}_{Tot}(t)|<\epsilon
$
for $0 \leq t\leq L^{\ast}.$ In a similar way as before, we get \eqref{t23}.
\end{proof}
Now, we are ready to prove Theorem~\ref{Thm:main result}, which is a result of Lemma~\ref{l1}, Theorem~\ref{t2}, and the continuous mapping theorem.
\begin{proof}[Proof of Theorem~\ref{Thm:main result}]
By Lemma~\ref{l1}, we know that $\wh{W}^n(\cdot) \overset{d} \rightarrow \wh{W}_{Tot}(\cdot).$ Also, the lifting map,
$\Delta$ is continuous. Applying the continuous mapping theorem \citep[Theorem~1.2]{durrett1996stochastic}, we have that
$\wh{W}^n(\cdot) \overset{d} \rightarrow \Delta\wh{W}_{Tot}(\cdot).$  Now, the result follows by Theorem~\ref{t2} and the
converging together lemma in \citep[Lemma~1.3]{durrett1996stochastic}.
\end{proof}

\phantomsection
\addcontentsline{toc}{section}{Acknowledgements}
\section*{Acknowledgements}
\begin{footnotesize}
This work was done in part while the authors were visiting the Simons Institute for the Theory of Computing, Berkeley.
The research of Angelos Aveklouris is funded by a TOP grant
of the Netherlands Organization for Scientific Research (NWO) through project 613.001.301.
The research of Maria Vlasiou and Jiheng Zhang is partly supported by
two grants from the `Joint Research Scheme' program, sponsored by NWO and the Research
Grants Council of Hong Kong (RGC) through projects 649.000.005 and DHK007/
11T, respectively. The research of Bert Zwart is partly supported by
an NWO VICI grant.\end{footnotesize}

\begin{footnotesize}The authors would like to wish Professor Tomasz Rolski many more happy, productive and inspirational years. We are grateful for his guidance, friendship, and insights throughout our careers. The legacy he has build has had a profound impact on our academic lives. It has enriched our collaborations, formed friendships, and made us broader as researchers and human beings.
\end{footnotesize}

\cleardoublepage\phantomsection
\addcontentsline{toc}{section}{References}
\begin{footnotesize}
\bibliography{biblio}
\bibliographystyle{abbrv}
\end{footnotesize}
\end{document}